\theoremstyle{plain}
\newtheorem{theorem}{Theorem}
\theoremstyle{plain}
\newtheorem{lemma}{Lemma}
\theoremstyle{plain}
\newtheorem{corollary}{Corollary}
\theoremstyle{plain}
\newtheorem{propos}{Proposition}
\theoremstyle{plain}
\newtheorem{definition}{Definition}
\theoremstyle{plain}
\newtheorem{remark}{Remark}
\newtheorem{example}{Example}
\title
[Dilations and translations]
{Sequences of dilations and translations equivalent to the Haar system in $L^p$-spaces}
\author[S.V. Astashkin]{Sergey V. Astashkin}
\address{Department of Mathematics,
Samara National Research University, Moskovskoye shosse 34, 443086,
Samara, Russia}
\email{astash56@mail.ru}
\thanks{The work of the first author was supported by the Ministry of Science and Higher Education of the Russian Federation, project 1.470.2016/1.4 and by the RFBR grant 18-01-00414.}
\thanks{The work of the second author was supported by SEMC ``Mathematics of Future Technologies''.}
\author[P.~A.~Terekhin]{Pavel A. Terekhin}
\address{Department of Mechanics and Mathematics,
Saratov State University, Astrakhanskaya Str 83, 410012,
Saratov, Russia}
\email{terekhinpa@mail.ru}
\date{\today}
\subjclass[2010]{Primary 46B15; Secondary 46E30, 46E15, 46E20}
\keywords{sequence of dilations and translations, Haar functions, $L^p$-spaces, $H^p(\mathbb{D})$-spaces, $BMO$, dyadic $H^1$-space, Haar chaos, generating function}
\begin{document}

\begin{abstract}
Let $f=\sum_{k=0}^{\infty}c_kh_{2^k}$, where $\{h_n\}$ is the classical Haar system, $c_k\in\mathbb{C}$. Given a $p\in (1,\infty)$, we find the sharp conditions, under which the sequence $\{f_n\}_{n=1}^\infty$ of dilations and translations of $f$ is a basis in the space $L^p[0,1]$, equivalent to $\{h_n\}_{n=1}^\infty$. The results obtained depend substantially on whether $p\ge 2$ or $1<p<2$ and include as the endpoints of the $L_p$-scale the spaces $BMO_d$ and $H_d^1$. The proofs are based on an appropriate splitting the set of positive integers $\mathbb{N}=\cup_{d=1}^\infty N_d$ so that the equivalence of $\{f_n\}_{n=1}^\infty$ to the Haar system in $L_p$ would be ensured by the fact that  $\{f_n\}_{n\in N_d}$ is a basis in the subspace $[h_{m},m\in N_d]_{L_p}$,  equivalent to the Haar subsequence $\{h_n\}_{n\in N_d}$ for every $d=1,2,\dots$.
\end{abstract}

\maketitle


\section{Introduction and main results}
\label{Intro}

Many issues related to the geometry of the $L^p$-spaces and more generally rearrangement invariant spaces are closely connected with properties of the classical Haar system $\{h_n\}_{n=0}^{\infty}$ (see e.g. \cite{JMST,KS,LT2,M,NS}). It is well known that $\{h_n\}$ is an unconditional basis in $L^p$ if and only if $1<p<\infty$ (see for different proofs of this fundamental fact  \cite[Theorem~2.c.5]{LT2}, \cite[Theorems~3.3.8 and 3.3.10]{KS}, \cite[Theorem~1.1.5]{M}, \cite[Theorem~5.5]{NS}). It is worth mentioning also that in the case $1<p<\infty$ Gamlen and Gaudet  \cite{GG} have found the isomorphic types of subspaces spanned by subsequences of the Haar system. Specifically, they have showed that each subsequence $\{h_{n_i}\}\subset \{h_n\}$ spans a subspace in $L^p[0,1]$, which is isomorphic either to $\ell^p$ or to $L^p[0,1]$.

Observe that the Haar system is the simplest and most important  representative of a much wider class of systems of dilations and translations. Let a function $f\in L^1[0,1]$ have zero mean value, i.e., $\int_0^1f(t)\,dt=0$.
\begin{definition}\label{d.aff}
The {\it sequence of dilations and translations} of $f$ (or the {\it Haar affine system generated by $f$}) consists of the functions
$$
f_n(t):=\begin{cases}
f(2^kt-j), & \mbox{if} \,\,\, t\in(\tfrac{j}{2^k},\tfrac{j+1}{2^k}],\\
0, & \mbox{otherwise},
\end{cases}
$$
where $n=2^k+j$, $k=0,1,\dots$ and $j=0,\dots,2^k-1$.
\end{definition}

In particular, if we take for $f$ the function $h(t):=h_1(t)=\chi_{(0,\frac12]}-\chi_{(\frac12,1]}$, we get the Haar system $\{h_n\}_{n=1}^{\infty}$, normalized in $L^{\infty}$ (without the function $h_0(t)=1$).

Let $1<p<\infty$. The main aim of this paper is the identification of conditions, under which the sequence of dilations and translations $\{f_{n}\}_{n=1}^\infty$ of a mean zero function $f$ is equivalent in the space $L^p$ to the Haar system.
The main idea behind is an appropriate splitting the set of positive integers $\mathbb{N}=\cup_{d=1}^\infty N_d$ so that the equivalence of $\{f_{n}\}_{n=1}^\infty$ to the Haar system would be ensured by the fact that  $\{f_{n}\}_{n\in N_d}$ is a basis in the subspace $[h_{m},m\in N_d]$,  equivalent to the Haar subsequence $\{h_{n}\}_{n\in N_d}$ for every $d=1,2,\dots$
This makes it possible to reduce the verification of the basisness and equivalence of a given system of dilations and translations to the Haar system to a similar checking for sequences $\{f_{n}\}_{n\in N_d}$, $d=1,2,\dots$, demonstrating thereby the interesting phenomenon that the certain properties of sequences of such a type follow from those of some their "special"\:subsequences.

One can see that this approach includes the additional requirement of the containment of the subsequence $\{f_{n}\}_{n\in N_d}$ into the subspace $[h_{m},m\in N_d]$ for each $d=1,2,\dots$ This justifies our further choice of the class of functions, whose sequences of dilations and translations we will consider.

Observe that the Haar function $h$ belongs to the following class of step-functions:
\begin{equation}\label{Haar chaos}
f(t)=\sum_{k=0}^\infty f(\tfrac{1}{2^k})\chi_{(\tfrac{1}{2^{k+1}},\tfrac{1}{2^k}]}(t), \quad 0<t\le 1.
\end{equation}
It is plain that such a mean zero function $f\in L^p$, $1<p<\infty$, has the Fourier-Haar expansion of the form
\begin{equation}\label{F-H expansion}
f=\sum_{k=0}^{\infty}c_kh_{2^k}, \quad\mbox{where}\; c_k=2^k(f,h_{2^k})
\end{equation}
(in what follows, for any measurable functions $u$ and $v$ we set $(u,v):=\int_0^1 u(t)v(t)\,dt$, provided if this integral is well-defined). Moreover, for every $d\in\mathbb{N}$ we put
$$
N_d:=\{n\in\mathbb{N}:\,n=2^{i_1}+\ldots+2^{i_d},\;\mbox{where}\;i_1>\ldots>i_d\ge0,  i_j\in{\mathbb Z}_+\}.$$
Then, by the definition of dilations and translations of a function, for any $n\in N_d$ 
$$
f_{n}=\sum_{k=0}^{\infty}c_kh_{2^kn}.
$$
Since $2^kn=2^{i_1+k}+\ldots+2^{i_d+k}\in N_d$ as well, it follows that $f_{n}\in[h_{m},m\in N_d]_{L^p}$ (see also Lemma~\ref{l.1}).
Thus, we have $[f_{n},n\in N_d]_{L^p}\subset [h_{n},n\in N_d]_{L^p}$ for every $d=1,2,\dots$. Observe, however, that the stronger condition $[f_{n},n\in N_d]_{L^p}=[h_{n},n\in N_d]_{L^p}$ need not to be fulfilled, in general (see Lemma~\ref{l.4} and Remark~\ref{rem: vanish}).

Following \cite[Definition~4]{AT2018}, we will call each function $x$ of the form
$$
x=\sum_{n\in N_d}\xi_nh_{n}, \quad \xi_n\in\mathbb{C},
$$
a {\it Haar chaos of order $d$} and denote by $\mathcal{H}_{ch}^d$ the set of all such functions.\footnote{We follow here the common terminology related to chaoses in independent random variables (cf. \cite[p.~147]{KwW}).} Observe that the latter series converges a.e. on $[0,1]$ and hence such a function $x$ is well-defined. Clearly, $\mathcal{H}_{ch}^d \cap L^p=[h_{n},n\in N_d]_{L^p}$, $d=1,2,\dots$.

The study of sequences of dilations and translations generated by Haar chaoses was initiated in the paper \cite{AT2018}. In particular, it was proved there (see \cite[Corollary~4]{AT2018}) that the sequence of dilations and translations of a first-order Haar chaos $f$, defined by \eqref{F-H expansion}, is equivalent in $L^p$ to the Haar system for all $1<p<\infty$ if and only if the generating function
$$
\hat{f}(z)=\sum_{k=0}^{\infty}c_kz^k,
$$
corresponding to the sequence $(c_k)_{k=0}^\infty$, is analytic and does not vanish in the unit disk $\mathbb{D}$. Moreover, from Theorem~7 of the paper \cite{AT2018a}, devoted to studying affine Walsh-type systems, it follows easily that, for every $1<p<\infty$, the sequence of dilations and translations of such a function $f$ is a basis in $L^q$ for $1<q<p$ if and only if the function $\hat{f}(z)$ is analytic and does not vanish in the smaller disk $\mathbb{D}_{2^{-1/p}}$. However, the problem of identifying the sharp conditions, ensuring the equivalence of  the sequence of dilations and translations of $f$ to the Haar system, for the certain value of $p\in (1,\infty)$, was remained open. Here, we go somewhat deeper and address this  question, showing that the answer depends substantially on whether $p\ge 2$ or $1<p<2$, which reflects the well-known fact that geometrical properties of the $L^p$-spaces in these cases are different.\footnote{Comparing to that, the above-mentioned criterion from the paper \cite{GG} to determine the isomorphic type of the subspace, spanned in $L^p$ by a given subsequence $\{h_{n_i}\}\subset \{h_n\}$, is independent of $p$.}

Let us mention briefly another direction of research related to studying  systems of dilations and translations, namely, the representation of functions of various function spaces. It was started as long ago as in 1941 by Men'shov in the case of the trigonometric system \cite{Menchoff}, and then was continued by Talalyan \cite{Tal}, which has introduced explicitly the notion of representing system (see also the survey \cite{Ul'yanov}).  Concerning the study of representing systems of dilations and translations in the $L_p$-spaces see the paper \cite{FO95} (and references therein) as well the recent work \cite{AT2019}, where this problem is considered in a more general setting of rearrangement invariant spaces.

A crucial role in the proofs will be played by properties of the generating function $\hat{f}(z)$, corresponding to the sequence of Fourier-Haar coefficients of a given first-order Haar chaos $f$. Therefore, before stating our main results, we need to introduce some suitable spaces of analytic functions.

Let $1\le p\le\infty$, $R>0$, and let $A_p^+(\mathbb{D}_R)$ be the Banach space of all analytic functions $u(z)=\sum_{k=0}^{\infty}a_kz^k$ in the disk $\mathbb{D}_R=\{z\in\mathbb{C}:|z|<R\}$, equipped with the norm
$$
\|u\|_{A_p^+(\mathbb{D}_R)}:=\|(a_kR^k)\|_{\ell^p}
=\biggl(\sum_{k=0}^{\infty}(|a_k|R^k)^p\biggr)^{1/p}.
$$
We say that a function $u\in A_p^+(\mathbb{D}_R)$ is a {\it $p$-multiplier}  if $uv\in A_p^+(\mathbb{D}_R)$ for all $v\in A_p^+(\mathbb{D}_R)$.
The space $M_p^+(\mathbb{D}_R)$, consisting of all $p$-multipliers and endowed with the norm
$$
\|u\|_{M_p^+(\mathbb{D}_R)}:=\sup_{\|v\|_{A_p^+(\mathbb{D}_R)}\le1}\|uv\|_{A_p^+(\mathbb{D}_R)},
$$
is a Banach algebra with respect to the usual multiplication of functions.


As it will follow from our proofs, the key condition of invertibility of the function $\hat{f}(z)$ in the algebra $M_p^+(\mathbb{D}_{2^{-1/p}})$ means the equivalence of each subsystem $\{f_n\}_{n\in N_d}$, $d\in\mathbb{N}$, of the system of dilations and translations of a function $f\in \mathcal{H}_{ch}^d \cap L^p$ to the corresponding Haar subsystem (see the very beginning of the proofs of Propositions~\ref{l.L2} and \ref{l.Lpg2}).
So, the main question can be stated as follows: Does  this fact ensure the equivalence of the whole system $\{f_n\}_{n=1}^\infty$ to the Haar system? As was already said above, the results show that the answer depends largely on the value of $p$.



\begin{theorem}\label{t.1}
Let $p\ge2$, and let $f\in L^p$ be a Haar chaos of order $1$.
The following conditions are equivalent:

(i) the system of dilations and translations of $f$ is a basis in $L^p$ that is equivalent to the Haar system;

(ii) for every $d=1,2,\dots$ the subsystem $\{f_{n}\}_{n\in N_d}$ is a basis in the subspace $\mathcal{H}_{ch}^d\cap L^p$,  equivalent to the Haar subsystem $\{h_{n}\}_{n\in N_d}$;

(iii) the subsystem $\{f_{2^k}\}$ is a basis in the subspace $\mathcal{H}_{ch}^1\cap L^p$, equivalent to the Haar subsystem $\{h_{2^k}\}$;

(iv) the functions $\hat{f}(z)$ and $1/\hat{f}(z)$ belong to the space $M_p^+(\mathbb{D}_{2^{-1/p}})$.
\end{theorem}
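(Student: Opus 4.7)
The plan is to follow the cycle (i) $\Rightarrow$ (ii) $\Rightarrow$ (iii) $\Rightarrow$ (iv) $\Rightarrow$ (i), with the last step doing the essential work. For (i) $\Rightarrow$ (ii), the unconditionality of $\{h_n\}$ in $L^p$ transfers to $\{f_n\}$ by the equivalence, so restricting indices to $N_d$ gives both the norm equivalence of $\{f_n\}_{n\in N_d}$ with $\{h_n\}_{n\in N_d}$ and the basis property in its span. To see that the span is all of $\mathcal{H}_{ch}^d\cap L^p$, I would expand each $h_n$ (with $n\in N_d$) in the $\{f_m\}$-basis and apply the bounded Haar projection onto the $N_d$-block: this annihilates every $f_m$ with $m\notin N_d$ (since such $f_m$ lies in a distinct $\mathcal{H}_{ch}^{d'}$), leaving $h_n$ as a convergent series in $\{f_m\}_{m\in N_d}$. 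The implication (ii) $\Rightarrow$ (iii) is specialization to $d=1$.

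For (iii) $\Rightarrow$ (iv), I would observe that on coefficient sequences $(\lambda_k)$ the assignment $h_{2^k}\mapsto f_{2^k}=\sum_{j\ge0}c_jh_{2^{k+j}}$ is convolution with $(c_k)$, i.e., multiplication by $\hat f(z)$ on formal power series. Bounded invertibility of this convolution on the sequence realization of $\mathcal{H}_{ch}^1\cap L^p$ then translates into $\hat f,1/\hat f\in M_p^+(\mathbb{D}_{2^{-1/p}})$. The subtle point is that the $L^p$-norm on $\mathcal{H}_{ch}^1\cap L^p$ is not literally a weighted $\ell^p$-norm (Paley--Littlewood produces a tail expression in partial sums of $|\lambda_k|^2$), so one must verify that the multiplier algebras on these two distinct sequence spaces coincide.

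For (iv) $\Rightarrow$ (i), I plan two stages. In the first, fix $d$ and write each $n\in N_d$ uniquely as $n=2^ku$ with $u$ odd in $N_d$. An affine change of variable sending the parent dyadic interval $\operatorname{supp}(h_u)$ onto $(0,1]$ identifies the chain $[h_{2^ku},\,k\ge0]_{L^p}$ with $\mathcal{H}_{ch}^1\cap L^p$ (up to a scalar reflecting the length of $\operatorname{supp}(h_u)$), intertwining the maps $h_{2^ku}\mapsto f_{2^ku}$ and $h_{2^k}\mapsto f_{2^k}$; hence (iv) yields chain-wise equivalence of $\{f_n\}$ and $\{h_n\}$ with constants independent of $u$. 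Moreover, for distinct odd $u,u'\in N_d$ the parents $\operatorname{supp}(h_u)$ and $\operatorname{supp}(h_{u'})$ are disjoint, since a strict nesting would force the inner one to acquire an extra low bit, contradicting oddness together with having exactly $d$ bits. Disjointness yields exact $\ell^p$-additivity of chain norms inside $\mathcal{H}_{ch}^d\cap L^p$, producing (ii).

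In the second stage, to combine the equivalences across $d$ and conclude (i), I would pass to the Paley--Littlewood square functions $S_a$ and $S_b$ of $\sum a_nh_n$ and $\sum a_nf_n=\sum b_mh_m$ (where $b$ is obtained from $a$ by chain-wise convolution with $c$). At each point $t$, the indices $m\in N_d$ with $t\in\operatorname{supp}(h_m)$ form a consecutive segment in a single chain, so the pointwise comparison of $S_a(t)$ and $S_b(t)$ within each $d$-component reduces to a chain-wise $\ell^2$-equivalence of finite convolutions. The restriction $p\ge2$ then enters through the embedding $\ell^2\hookrightarrow\ell^p$ (equivalently, convexity of $x\mapsto x^{p/2}$), which converts the super-additivity of the $d$-components into a genuine norm equivalence after integration. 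I expect the main obstacle to be extracting uniform chain-wise $\ell^2$-control of the finite convolutions $\alpha\mapsto\alpha*c$ from the $M_p^+$-multiplier condition, which a priori only controls infinite sequences in the $A_p^+$-norm; a refined Paley--Littlewood-on-the-chain analysis seems the natural bridge.
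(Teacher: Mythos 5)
Your cycle is organized sensibly, and large parts of it do align with what is actually true: the ``subtle point'' you flag in (iii)$\Rightarrow$(iv) is precisely the paper's Lemma~\ref{l.1} (a first-order chaos is a step function on the disjoint intervals $(2^{-k-1},2^{-k}]$, so $\|x\|_{L^p}\simeq\|\hat x\|_{A_p^+(\mathbb{D}_{2^{-1/p}})}$ and the sequence model really is a weighted $\ell^p$, not a Littlewood--Paley tail expression), and your first stage of (iv)$\Rightarrow$(i) --- the disjoint chain decomposition of $\mathcal{H}_{ch}^d$ with affine identification of each chain with $\mathcal{H}_{ch}^1$ --- is a correct reformulation of the paper's passage from $d=1$ to general $d$ (done there by substituting the chain coefficients into the $d=1$ multiplier inequality and summing in $\ell^p$), yielding (ii) with constants independent of $d$.

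The genuine gap is your second stage, which is the heart of the theorem. A pointwise comparison of the $d$-component square functions would require, uniformly in the truncation length $K$, a two-sided $\ell^2$ estimate $\sum_{k\le K}|(a*c)_k|^2\simeq\sum_{k\le K}|a_k|^2$ for the chain convolutions (only the segment $0\le k\le k_d$ of a chain is visible at the point $t$). This is not a technicality that a ``refined Paley--Littlewood-on-the-chain analysis'' can extract from (iv): it is false under (iv). Take $\hat f(z)=1-z$, i.e. $c=(1,-1,0,\dots)$; both $\hat f$ and $1/\hat f(z)=\sum_k z^k$ lie in $M_p^+(\mathbb{D}_{2^{-1/p}})$ (since $\sum_k 2^{-k/p}<\infty$, so $1/\hat f\in A_1^+\subset M_p^+$), hence (i) holds for this $f$; yet for $a=(1,1,\dots,1,0,\dots)$ with $K+1$ ones one has $\sum_{k\le K}|(a*c)_k|^2=1$ while $\sum_{k\le K}|a_k|^2=K+1$, and the analogous computation for the dual symbol shows the one-sided truncated bound fails for $1/\hat f$ as well. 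So no pointwise (even componentwise) equivalence of $S_a$ and $S_b$ is available; only the integrated comparison is true, and it must be produced by another mechanism. The paper's mechanism is: prove only the upper bound $\|T_fx\|_{L^p}\lesssim\|x\|_{L^p}$ and then apply the same to the dual symbol, since $T_f^{-1}=T_g$ also commutes with the multishift; for $p>2$ the upper bound comes from the sharp function via the commutator identity of \cite[Lemma 4]{AT2018}, which gives $2^{|\alpha|}V^{\alpha*}T_fx=2^{|\alpha|}V^{\alpha*}T_fx_d-T_f2^{|\alpha|}V^{\alpha*}x_d+T_f2^{|\alpha|}V^{\alpha*}x$ for the matching chaos component $x_d$, whence, using the $L^2$-boundedness of $T_f$ (which does follow from (iv)), $(T_fx)^{\sharp}\le\sup_d(T_fx_d)^{\sharp}+C\sup_d x_d^{\sharp}+Cx^{\sharp}$ pointwise, and then $\bigl\|\sup_d(T_fx_d)^{\sharp}\bigr\|_{L^p}\le\bigl(\sum_d\|T_fx_d\|_{L^p}^p\bigr)^{1/p}\lesssim\bigl(\sum_d\|x_d\|_{L^p}^p\bigr)^{1/p}\lesssim\|x\|_{L^p}$, the last step being exactly where $p\ge2$ enters (pointwise $\ell^p\subset\ell^2$ against the square function of the chaos decomposition); the case $p=2$ is handled directly by orthogonality of the chaos components. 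Without an ingredient of this kind replacing pointwise square-function control, your stage two does not close.
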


We establish also an analogue of Theorem \ref{t.1} in the limiting case $p=\infty$ for the dyadic space $BMO_d$ of functions with bounded mean oscillation    (see Corollaries \ref{c.BMOd1} and \ref{c.BMOd2}).

Somewhat surprisingly that the assertions of Theorem \ref{t.1} cannot be extended to the range $1<p<2$. Moreover, as the next result shows, the answer to the question if the system of dilations and translations of $f$ is equivalent or not to the Haar system in $L^p$ in this case does not depend on the value of $p\in (1,2)$.

\begin{theorem}\label{t.2}
Let $f\in L^1$ be a Haar chaos of order $1$.
The following conditions are equivalent:

(a) there is $p\in (1,2]$ such that $f\in L^p$ and the system of dilations and translations of $f$ is equivalent in $L^p$  to the Haar system;

(b) $f\in L^2$ and for each $p\in (1,2]$ the system of dilations and translations of $f$ is equivalent in $L^p$  to the Haar system.
\end{theorem}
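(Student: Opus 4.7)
The implication (b)$\Rightarrow$(a) is immediate: (b) supplies equivalence in every $L^p$ with $p\in(1,2]$, in particular for any chosen one. For the converse, I would reduce the problem to Theorem~\ref{t.1} applied at the endpoint $p=2$. Suppose $f\in L^{p_0}$ and $\{f_n\}$ is equivalent to $\{h_n\}$ in $L^{p_0}$ for some fixed $p_0\in(1,2]$. My overall plan is to first prove that $\hat f$ and $1/\hat f$ lie in $M_2^+(\mathbb{D}_{2^{-1/2}})$, then extract equivalence in $L^2$ (hence $f\in L^2$) via Theorem~\ref{t.1}, and finally propagate the equivalence to every $p\in(1,2]$ by a direct analytic construction on the smaller disks $\mathbb{D}_{2^{-1/p}}$.

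The central technical step is to obtain $\hat f,1/\hat f\in M_2^+(\mathbb{D}_{2^{-1/2}})$, which coincides with $H^\infty(\mathbb{D}_{2^{-1/2}})$ and so amounts to boundedness and boundedness away from zero of $\hat f$ on the critical disk of radius $2^{-1/2}$. Mere basisness of $\{f_n\}$ in $L^{p_0}$ already gives, via Theorem~7 of \cite{AT2018a}, that $\hat f$ is analytic and non-vanishing on $\mathbb{D}_{2^{-1/p}}$ for some $p>p_0$. When $p_0=2$ this disk already strictly exceeds $\mathbb{D}_{2^{-1/2}}$ and the multiplier condition follows; the subtlety appears when $p_0<2$, since the resulting radius may lie below $2^{-1/2}$. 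The plan here is to exploit the full strength of equivalence (beyond basisness) by testing it on each subsystem $\{f_n\}_{n\in N_d}$---whose containment in $[h_m:m\in N_d]_{L^{p_0}}$ was established in the introduction---and to use the cotype-$2$ geometry of $L^{p_0}$ for $p_0\le 2$. Combined with the Khintchine--Maurey / Littlewood--Paley square function estimate for the Haar basis, the equivalence restricted to each $N_d$ forces weighted $\ell^2$-bounds on the $d$-fold convolutions of the Fourier--Haar coefficient sequence of $f$, and in the aggregate these are equivalent to the $H^\infty(\mathbb{D}_{2^{-1/2}})$ conditions for $\hat f$ and its reciprocal.

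Once these multiplier conditions are in hand, the implication (iv)$\Rightarrow$(i) of Theorem~\ref{t.1} at $p=2$ yields the equivalence of $\{f_n\}$ and $\{h_n\}$ in $L^2$ and, in particular, $f=f_1\in L^2$, which is the first half of (b). For $p\in(1,2)$, the strict inclusion $\overline{\mathbb{D}_{2^{-1/p}}}\subset \mathbb{D}_{2^{-1/2}}$ together with Cauchy's estimates give geometric decay of the Taylor coefficients of $\hat f$ and $1/\hat f$ on the smaller disk, placing both in $A_1^+(\mathbb{D}_{2^{-1/p}})\subset M_p^+(\mathbb{D}_{2^{-1/p}})$ via Young's convolution inequality. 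A direct construction of the isomorphism $T h_n=f_n$ in $L^p$, analogous to the sufficiency portion of Theorem~\ref{t.1} and still valid for $1<p<2$ under these stronger analytic hypotheses, then completes the proof. The main obstacle throughout is the first step: bridging the radius gap between what basisness alone provides (nonvanishing in a possibly too-small disk when $p_0<2$) and what equivalence must force (nonvanishing in $\mathbb{D}_{2^{-1/2}}$). This is precisely where the cotype-$2$ restriction $p_0\le 2$ is used in an essential way, and explains why Theorem~\ref{t.2} has no analogue for $p_0>2$.
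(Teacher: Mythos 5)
Your reduction of (b)$\Rightarrow$(a) is fine, but the central step of your (a)$\Rightarrow$(b) argument rests on a false equivalence. Testing the full equivalence only on the subsystems $\{f_n\}_{n\in N_d}$ yields, uniformly in $d$, exactly the invertibility of $\hat f$ in $M_{p_0}^+(\mathbb{D}_{2^{-1/p_0}})$ and nothing more: by Lemma~\ref{l.2} the span of $\{h_n\}_{n\in N_d}$ in $L^{p_0}$ is a weighted $\ell^{p_0}$, so no $\ell^2$/cotype information survives inside a fixed $N_d$ (cf.\ Remark~\ref{rem:independ on p}). For $p_0<2$ this is strictly weaker than the $H^{\infty}(\mathbb{D}_{2^{-1/2}})$ invertibility you want ``in the aggregate''. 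Concretely, take $f=h_1-2^{1/q}h_2$ with $p_0<q<2$: then $\hat f(z)=1-2^{1/q}z$ and $1/\hat f(z)=\sum_k 2^{k/q}z^k$ both lie in $A_1^+(\mathbb{D}_{2^{-1/p_0}})\subset M_{p_0}^+(\mathbb{D}_{2^{-1/p_0}})$, so every subsystem $\{f_n\}_{n\in N_d}$ is equivalent to $\{h_n\}_{n\in N_d}$ in $L^{p_0}$, uniformly in $d$; yet $1/\hat f$ blows up at $z=2^{-1/q}\in\mathbb{D}_{2^{-1/2}}$, and by Example~1(b) the whole system $\{f_n\}$ is a basis in $L^{p_0}$ that is \emph{not} equivalent to the Haar system. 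So the per-$N_d$ data you propose to exploit cannot bridge the radius gap; what is needed is information coupling different chaos orders. The paper obtains it through $H^1_d$--$BMO_d$ duality: equivalence in $L^{p_0}$ is first transferred to $H_d^1$ (via \cite{AT2018}), and then a single disjointly supported $BMO_d$ test function $x_0$, spread over all chaos orders $s$, gives $\|T_f^*x_0\|_{L^2}^2=\sum_{j=0}^n|c_j|^2 2^{-j}$, forcing $f\in L^2$; a spectral-radius argument with $\hat f_r(z)=\hat f(rz)$ and Corollary~\ref{c.L23} then yields $\hat f\in H^{\infty}(\mathbb{D}_{2^{-1/2}})$, and likewise for the dual function. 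Your cotype-$2$/square-function heuristic does not substitute for this cross-order step, which is where Proposition~\ref{l.Hd} does the real work.

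There is also a secondary gap in the propagation to $1<p<2$. The sufficiency portion of Theorem~\ref{t.1} (the implication (iv)$\Rightarrow$(i) in Proposition~\ref{l.Lpg2}) uses the sharp-function equivalence $\|y^{\sharp}\|_{L^p}\simeq\|y\|_{L^p}$, valid only for $p>2$, so it does not extend ``analogously'' to $p<2$; and the multiplier condition alone gives, for $p<2$, only a basis in chaoses (Theorem~\ref{t.4}), not boundedness of $T_f$ on all of $L^p$. Nor does the crude series bound help: $\|T_{h_2}^k\|_{L^p}\ge 2^{-k/2}$ for $1<p\le2$ (the spectral radius of $T_{h_2}$ in $L^p$ is $2^{-1/2}$, see Section~\ref{Examples}), while $\hat f\in H^{\infty}(\mathbb{D}_{2^{-1/2}})$ only gives $|c_k|\lesssim 2^{k/2}$, so $\sum_k|c_k|\,\|T_{h_2}^k\|_{L^p}$ can diverge. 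The correct descent from $L^2$ is the easy direction of Proposition~\ref{l.Hd} (the atomic decomposition gives $\|T_f\|_{H_d^1}\le\|T_f\|_{L^2}$) applied to both $T_f$ and $T_g$, combined with the transfer results cited from \cite{AT2018} (or interpolation between $H_d^1$ and $L^2$). With these two repairs your outline collapses onto the paper's argument; as written, however, both the key extraction of the $H^{\infty}(\mathbb{D}_{2^{-1/2}})$ conditions and the final step for $p<2$ are unjustified.
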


An analogous result holds also in the limiting case $p=1$ provided that we consider the space $H_d^1$ that is predual for $BMO_d$ (see Corollary \ref{c.Hd2}).

An important consequence of Theorem \ref{t.2} is the existence of a rather wide class of functions $f\in L^p$, $1<p<2$, such that the corresponding system of dilations and translations is a basis in $L^p$, non-equivalent to the Haar system. Observe that, by using subtle properties of analytic functions related to their behaviour on the boundary of the disk $\mathbb{D}_{2^{-1/p}}$, such a basis can be constructed also in the case $p\ge2$. In contrast to that, for $p<2$ the non-equivalence of the system of dilations and translations of $f$ to the Haar system in $L^p$ is ensured just by the fact that the radius of analyticity of $\hat{f}(z)$ or $1/\hat{f}(z)$ is less than the critical radius $2^{-1/2}$.
More precisely, the following result holds.

\begin{theorem}\label{t.3}
Let $1<p\le2$, and let $f\in L^1$ be a Haar chaos of order $1$.

Suppose the function $\hat{f}(z)$ is analytic and does not vanish in the disk $\mathbb{D}_{2^{-1/p}}$, but at least one of the functions $\hat{f}(z)$ or $1/\hat{f}(z)$ is not bounded on $\mathbb{D}_{2^{-1/2}}$.

Then the system of dilations and translations of $f$ is a basis in $L^q$ for every  $1<q<p$ that is non-equivalent to the Haar system.
\end{theorem}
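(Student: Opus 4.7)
The proof splits into two parts: verifying that $\{f_n\}_{n=1}^\infty$ is a basis of $L^q$ for every $1<q<p$, and showing that this basis is not equivalent to the Haar system. The first part is an immediate instance of the criterion recorded in the introduction (following Theorem~7 of the cited paper on affine Walsh-type systems): the hypothesis that $\hat{f}(z)$ is analytic and non-vanishing in $\mathbb{D}_{2^{-1/p}}$ is already the exact necessary and sufficient condition for $\{f_n\}_{n=1}^\infty$ to be a basis of $L^q$ for all $1<q<p$. So the real work is concentrated in the non-equivalence statement.

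For non-equivalence I would argue by contradiction. Assume some $q\in(1,p)$ exists for which $\{f_n\}_{n=1}^\infty$ is equivalent in $L^q$ to the Haar system. Then $f=f_1\in L^q$, and since $p\le 2$ we have $q\in(1,2]$, so condition (a) of Theorem~\ref{t.2} is satisfied (with $q$ playing the role of $p$). Consequently condition (b) yields $f\in L^2$ together with the equivalence of $\{f_n\}$ to $\{h_n\}$ in $L^2$. At this stage Theorem~\ref{t.1} becomes applicable with $p=2$: condition (i) forces condition (iv), namely that both $\hat{f}$ and $1/\hat{f}$ lie in the multiplier algebra $M_2^+(\mathbb{D}_{2^{-1/2}})$.

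The final step is to identify this algebra. Under the rescaling $u(z)\mapsto u(Rz)$ the space $A_2^+(\mathbb{D}_R)$ is isometric to the classical Hardy space $H^2(\mathbb{D})$, whose multiplier algebra is well known to coincide with $H^\infty(\mathbb{D})$; consequently $M_2^+(\mathbb{D}_R)=H^\infty(\mathbb{D}_R)$, the algebra of bounded analytic functions on $\mathbb{D}_R$. Therefore $\hat{f}$ and $1/\hat{f}$ would both extend to bounded analytic functions on $\mathbb{D}_{2^{-1/2}}$, contradicting the standing hypothesis that at least one of them fails to be bounded there. The main obstacle I anticipate is exactly this identification $M_2^+=H^\infty$; once that is in hand, the argument is just a chaining of Theorem~\ref{t.2} with the $p=2$ instance of Theorem~\ref{t.1}, which nicely explains why the critical radius $2^{-1/2}$ appears as the threshold in the statement.
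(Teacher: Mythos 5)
Your proposal is correct and follows essentially the same route as the paper: the basis assertion is imported from Theorem~7 of \cite{AT2018a} (the paper phrases this via the affine Walsh system $W^{\alpha}f$, but the ingredient is identical), and the non-equivalence is exactly the paper's one-line appeal to Theorems~\ref{t.1} and \ref{t.2}, which you simply spell out: equivalence in some $L^q$, $q<2$, would upgrade via Theorem~\ref{t.2} to equivalence in $L^2$, hence by Theorem~\ref{t.1} (case $p=2$) both $\hat f$ and $1/\hat f$ would lie in $M_2^+(\mathbb{D}_{2^{-1/2}})=H^{\infty}(\mathbb{D}_{2^{-1/2}})$, contradicting the hypothesis. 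The identification $M_2^+(\mathbb{D}_R)=H^{\infty}(\mathbb{D}_R)$ that you flag as the only potential obstacle is standard and is already used in the paper's proof of Proposition~\ref{l.L2}, so there is no gap.
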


Thus, the condition of invertibility of the function $\hat{f}(z)$ in the algebra of multipliers turns, in the case $p<2$, weaker than the equivalence in $L^p$ of the system of dilations and translations of a first-order Haar chaos $f$ to the Haar system. To clarify the meaning of the latter condition, we introduce the following notion.


The system of dilations and translations of a function $f$ is said to be a {\it basis in chaoses} in $L^p$, whenever for all mean zero $x\in L^p$ we have
\begin{equation}
\label{exp via chaoses}
x=\sum_{d=1}^{\infty}\sum_{n\in N_d}(x,g^n)f_n\;\;\mbox{(convergence in}\;L^p),
\end{equation}
where $(g^n)$ is the biorthogonal system to the system $(f_n)$ (its explicit form will be given in Section~\ref{dual}).


\begin{theorem}\label{t.4}
Let $1<p<\infty$. Suppose $f\in L^p$ is a Haar chaos of order $1$ such that both functions $\hat{f}(z)$ and $1/\hat{f}(z)$ belong to the space $M_p^+(\mathbb{D}_{2^{-1/p}})$. Then,  the system of dilations and translations of $f$ is a basis in chaoses in $L^p$.
\end{theorem}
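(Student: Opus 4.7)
The plan is to decompose any mean zero $x\in L^p$ into its Haar chaos components, expand each component separately via the within-chaos equivalences that the hypothesis delivers, and then recognize the resulting coefficients as $(x,g^n)$. First I would use the fact that for $1<p<\infty$ the Haar system is an unconditional basis of the mean zero subspace of $L^p$. Hence, writing the Fourier-Haar expansion of $x$ and grouping its terms by the chaos level via the partition $\mathbb{N}=\bigsqcup_{d\ge 1}N_d$, unconditionality gives $x=\sum_{d=1}^\infty x_d$ in $L^p$, where each $x_d\in\mathcal{H}_{ch}^d\cap L^p$ is the sub-series indexed by $N_d$.

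Next I would invoke the equivalences between $\{f_n\}_{n\in N_d}$ and $\{h_n\}_{n\in N_d}$ within each chaos level. The opening reductions in the proofs of Propositions~\ref{l.L2} and \ref{l.Lpg2} show that the condition $\hat f,1/\hat f\in M_p^+(\mathbb{D}_{2^{-1/p}})$ already forces, for every $d\ge 1$, the system $\{f_n\}_{n\in N_d}$ to be a Schauder basis of the subspace $\mathcal{H}_{ch}^d\cap L^p=[h_m:m\in N_d]_{L^p}$ and to be equivalent there to $\{h_n\}_{n\in N_d}$. Hence each $x_d$ admits a unique norm-convergent expansion
$$
x_d=\sum_{n\in N_d}\xi_n^{(d)}f_n.
$$
To identify $\xi_n^{(d)}$ with $(x,g^n)$, I would use the explicit description of the biorthogonal system $(g^n)$ constructed in Section~\ref{dual}: for $n\in N_d$ the functional $g^n$ is itself a Haar chaos of order $d$, i.e.\ its Haar-Fourier expansion is supported on $N_d$, which is the dual reflection of $f_n\in\mathcal{H}_{ch}^d$. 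Since $N_d\cap N_{d'}=\emptyset$ for $d'\ne d$, this yields $(x_{d'},g^n)=0$ whenever $d'\ne d$, so that
$$
(x,g^n)=\sum_{d'\ge 1}(x_{d'},g^n)=(x_d,g^n)=\xi_n^{(d)},\qquad n\in N_d,
$$
by biorthogonality on $\mathcal{H}_{ch}^d\cap L^p$. Assembling the three steps produces the required expansion $x=\sum_{d=1}^\infty\sum_{n\in N_d}(x,g^n)f_n$ in $L^p$.

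The hard part will be the structural claim that underpins the identification step, namely that the biorthogonal functional $g^n$ for $n\in N_d$ has its Haar-Fourier expansion supported on $N_d$. This rests on the explicit formulas for $(g^n)$ derived in Section~\ref{dual} from $\hat f$ and $1/\hat f$, and is not an abstract consequence of biorthogonality alone. Once this chaos-diagonal structure is in hand, the remainder of the proof is just the interchange of an outer series (convergent by unconditionality of the Haar basis) with inner series (convergent by the within-chaos equivalences from Propositions~\ref{l.L2} and \ref{l.Lpg2}), so no further delicate convergence issue remains.
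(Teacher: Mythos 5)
Your proposal is correct and follows essentially the same route as the paper: decompose a mean zero $x\in L^p$ into its chaos components $x_d\in\mathcal{H}_{ch}^d\cap L^p$ using unconditionality of the Haar system, use that $\hat f,1/\hat f\in M_p^+(\mathbb{D}_{2^{-1/p}})$ makes $T_f$ and $T_g$ bounded on every chaos subspace for all $1<p<\infty$ (Proposition~\ref{l.Lpg2} and Remark~\ref{rem:independ on p}), and then expand each $x_d$ in $\{f_n\}_{n\in N_d}$ with coefficients $(x,g^n)$. The only cosmetic difference is that the paper obtains the coefficients by writing $x_d=T_fT_gx_d$ and computing the convolution of Haar coefficients with $(d_j)$ directly in symbols, whereas you identify them via biorthogonality together with the fact (explicit in Section~\ref{dual}) that $g^n$, $n\in N_d$, is itself a Haar chaos of order $d$; both identifications amount to the same calculation.
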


In the case $p\ge2$ Theorem \ref{t.4} is clearly an immediate consequence of Theorem \ref{t.1}, which implies that the system of dilations and translations of $f$ is an unconditional basis of the space $L^p$.

Throughout, we consider all function spaces over the complex field. Moreover, we write $F\lesssim G$ if the inequality $F\le CG$ holds with a constant $C>0$ independent of all or a part of arguments of quantities (norms) $F$ and $G$. The notation $F\simeq G$ means that $F\lesssim G$ and $G\lesssim F$.

\section{Preliminaries and auxiliary results}

\subsection{Operator generation of dilations and translations}
\label{generation}

Let
$$
\mathbb{A}=\bigcup_{k=0}^{\infty}\{0,1\}^k,
$$
that is, the family $\mathbb{A}$ consists of all multi-indices $\alpha=(\alpha_1,\dots,\alpha_k)$, $k=0,1,2,\dots$, such that $\alpha_\nu=0$ or $1$, $\nu=1,\dots,k$
(for $k=0$ we obtain the empty set $\varnothing$, which will be also assumed to be an element of $\mathbb{A}$).

Denote by $|\alpha|(=k)$ the length of $\alpha=(\alpha_1,\dots,\alpha_k)\in\mathbb{A}$ and by
$\alpha\beta$ the concatenation of $\alpha=(\alpha_1,\dots,\alpha_k)$ and $\beta=(\beta_1,\dots,\beta_l)$, i.e., the multi-index
$(\alpha_1,\dots,\alpha_k,\beta_1,\dots,\beta_l)$.
The set $\mathbb{A}$ can be treated as a free semigroup with two generators $\{0\}$ and $\{1\}$ with respect to the concatenation or as an infinite complete binary tree. To each $\alpha=(\alpha_1,\dots,\alpha_k)\in\mathbb{A}$ we assign the dyadic subinterval $I_{\alpha}$ of the interval $I=(0,1]$ by
$$
I_{\alpha}=(\tfrac{j}{2^k},\tfrac{j+1}{2^k}], \quad \mbox{where}\;k=|\alpha|\;\mbox{and}\; j=\sum_{\nu=1}^k\alpha_{\nu}2^{k-\nu}.
$$
Clearly, $I_{\alpha}\subset I_{\beta}$ if and only if $\alpha=\beta\gamma$ for some $\gamma\in\mathbb{A}$. Moreover, a family of intervals $\{I_{\alpha}\}_{\alpha\in D}$, $D\subset\mathbb{A}$, is disjoint if and only if from the equality $\alpha=\beta\gamma$, where $\alpha,\beta\in D$ and $\gamma\in\mathbb{A}$,
it follows that $\alpha=\beta$. The multi-index $\alpha$ of length $|\alpha|=k$ such that $\alpha_{\nu}=0$ for all $\nu=1,\dots,k$ will be denoted  by $0_k$. If $d\in\mathbb{N}$, then a multi-index $\alpha$, which contains precisely $d-1$ "ones"\:, can be written as $\alpha=(0_{k_1},1,0_{k_2},1,\dots,1,0_{k_d})$, where $k_1,\dots,k_d\ge0$.


Let $t=(0_{k_1},1,0_{k_2},1,\dots,0_{k_d},1,\dots)$ be the binary notation of a real number $t\in (0,1]$, or equivalently,
$$
t=\sum_{d=1}^{\infty}\frac{1}{2^{k_1+\ldots+k_d+d}}.
$$
Then, we have $I_{\alpha}\ni t$ for a multi-index $\alpha\in\mathbb{A}$ if and only if $\alpha=(0_{k_1},1,0_{k_2},1,\dots,0_{k_{d-1}},1,0_k)$ for some $d\in\mathbb{N}$ and $k=0,\dots,k_d$. Sometimes, for notational convenience, we write $(\xi(\alpha):\alpha\in\mathbb{A})$ instead of $(\xi_{\alpha}:\alpha\in\mathbb{A})$ (resp. $\xi_{k_1,\dots,k_d}$ instead of $\xi(0_{k_1},1,\dots,1,0_{k_d})$ for a family indexed by a subset of $\mathbb{A}$ of multi-indices containing precisely $d-1$ "ones").

Further, for a function $f(t)$, $t\in I$, we let
$$
V_0f(t)=\begin{cases}
f(2t), & t\in(0,\frac12],\\
0, & t\in(\frac12,1],
\end{cases}
\qquad
V_1f(t)=\begin{cases}
0, & t\in(0,\frac12],\\
f(2t-1), & t\in(\frac12,1].
\end{cases}
$$
Then, the system of dilations of translations of $f$ can be generated by the products
$$
V^{\alpha}=V_{\alpha_1}\ldots V_{\alpha_k}, \qquad \alpha=(\alpha_1,\dots,\alpha_k)\in\mathbb{A},
$$
of the operators $V_0$ and $V_1$ (in particular, $V^\varnothing$ is the identity $I$) as follows:
$$
V^{\alpha}f(t)=f_{k,j}(t), \qquad k=|\alpha|, \qquad j=\sum_{\nu=1}^k\alpha_{\nu}2^{k-\nu}.
$$
Note that the connection between this numbering and the natural one of the system of dilations and translations of $f$ (see Section~\ref{Intro}) is given by the binary expansion of positive integers, i.e., by the formula $f_n=V^{\alpha}f$, where $n=2^k+\sum_{\nu=1}^k\alpha_{\nu}2^{k-\nu}$, $n=1,2,\dots$.

\subsection{Haar multishift}

It is worthwhile to discuss the analogy between the operators $V_0$, $V_1$ and the well-known Halmos abstract characterization of the one-sided shift operator as an isometry $S$ in a Hilbert space $\mathcal{U}$, for which there is a vector $e\in\mathcal{U}$ such that the vectors $S^ke$, $k=0,1,\dots$, form an orthonormal basis in $\mathcal{U}$ (cf. ~\cite{H}).
As a canonical representation of this shift is considered often the multiplication operator $Su(z)=zu(z)$ in the Hardy space $H^2(\mathbb{D})$.
The following notion of multishift, introduced in \cite{T}, can be treated as an extension of the above notion to the case of two non-commuting operators.

\begin{definition}\label{d.mshift}
A pair of isometries $S_0$ and $S_1$ of a Hilbert space $\mathcal{U}$ is said to be a {\it multishift} whenever there is a vector $e\in\mathcal{U}$ such that the system $\{S^{\alpha}e\}_{\alpha\in\mathbb{A}}$ is an orthonormal basis in $\mathcal{U}$ (as in Section~\ref{generation}, $S^{\alpha}=S_{\alpha_1}\ldots S_{\alpha_k}$ is the operator product, corresponding to a multi-index $\alpha\in\mathbb{A}$).
\end{definition}

One can readily see that the operators $\sqrt{2}V_0$ and $\sqrt{2}V_1$ form a multishift in the subspace of $L^2[0,1]$ consisting of all mean zero functions, and the Haar system $\{2^{|\alpha|/2}V^{\alpha}h\}_{\alpha\in\mathbb{A}}$ is the corresponding orthonormal basis. By this reason, the pair of operators $V_0$ and $ V_1 $ will be called the {\it Haar multishift}.

Given a function $f$, we define the linear operator $T_f$ by
$$
T_fV^{\alpha}h=V^{\alpha}f, \qquad \alpha\in\mathbb{A}.
$$
In what follows, we repeatedly use the fact that the operator $T_f$ commutes with the operators $V_0$ and $V_1$, that is, for each mean zero Haar polynomial $x$ we have
$$
TV_0x=V_0Tx\;\;\mbox{and}\;\;TV_1x=V_1Tx.
$$
Conversely, every operator $T$, defined on the linear manifold of all mean zero Haar polynomials and commuting with $V_0$ and $V_1$, can be represented as $T_f$, where $f:=Th$ (indeed, $Th_{\alpha}=TV^{\alpha}h=V^{\alpha}Th=f_{\alpha}$, $\alpha\in\mathbb{A}$).

Of fundamental importance is the problem of a description of the commutant $c(V_0,V_1)_{L^p}$ of the Haar multishift, which is defined as the set of mean zero functions $f\in L^p$ such that the operator $T_f$ is bounded on $L^p$. More precisely, we say that the operator $T_f$ is bounded on the space $L^p$ if it is bounded on the subspace of $L^p$ of all mean zero functions; by $\|T_f\|_{L^p}$ we will denote its usual operator norm on this subspace.


Recall that every operator $T$ commuting with the shift operator $S$ in the Hardy space $H^2(\mathbb{D})$ has the form $T_u$, where $T_uv(z)=u(z)v(z)$, and it is bounded in $H^2(\mathbb{D})$ if and only if $u\in H^{\infty}(\mathbb{D})$ (see e.g. \cite[Chapter~17, Problem~147]{H2}). In contrast to this classical situation, a comprehensive description of the commutant of the Haar multishift is still an open problem even in the $L^2$-case. On the one hand, the boundedness and even the continuity of a function $f$, in general, does not guarantee the boundedness of the operator $T_f$ in $L^2$ \cite{Nik}. On the other hand, $T_f$ is bounded in the latter space under a certain minimal smoothness condition \cite{T1}. Concerning the case of a  function $f\in\mathcal{H}_{ch}^1$, the condition $f\in L^{\infty}$ is sufficient but not necessary for the operator $T_f$ to be bounded on $L^p$ for each $1<p<\infty$ \cite[Theorem 7]{AT2018}.


It is clear that the system of dilations and translations of a function $f$ forms a basis in the space $L^p$ equivalent to the Haar system if and only if the operator $T_f$ is an isomorphism of $L^p$. Moreover, as is easy to see, the inverse operator $T_f^{-1}$ commutes with the Haar multishift as well and therefore $T_f^{-1}=T_g$, where $g=T_f^{-1}h$.

\subsection{Dyadic $H^1$- and $BMO$-spaces}
\label{dyadic}

As the limiting spaces of the $L^p$-scale, $1<p<\infty$, we will consider the spaces $H^1$ and $BMO$ rather than $L^1$ and $L^\infty$   .

Let $x\in L^1=L^1[0,1]$, $\int_0^1x(t)\,dt=0$. Then, the Fourier-Haar expansion of $f$ can be rewritten by using the above introduced operators $V_0$ and $V_1$ as follows:
$$
x=\sum_{\alpha\in\mathbb{A}}\xi_{\alpha}V^{\alpha}h, \qquad \xi_{\alpha}=2^{|\alpha|}(x,V^{\alpha}h).
$$
Let $Px$ be the corresponding Paley function defined by
$$
Px(t)=\biggl(\sum_{k=0}^{\infty}|\xi(t_1,\dots,t_k)|^2\biggr)^{1/2}, \qquad t=(t_1,\dots,t_k,\dots)\in I.
$$
If $X$ is a Banach function space, $X\subset L^1$, then the Paley space $P(X)$ consists of all functions $x\in L^1$ such that $Px\in X$ and is equipped with the norm $\|x\|_{P(X)}:=\|Px\|_X$ \cite{AS}. In particular, $P(L^p)=L^p$ with equivalence of norms for every $1<p<\infty$.
On the other hand, $P(L^1):=H_d^1\varsubsetneq L^1$.

The following equivalent definition of the space $H_d^1$ can be done by using the so-called atomic decomposition (see e.g. \cite[Theorem 1.2.4]{M}). Specifically, $H_d^1$ consists of all functions $x\in L^1$, $\int_0^1x(t)\,dt=0$, admitting a representation
\begin{equation}\label{eq.at1}
x=\sum_{\alpha\in\mathbb{A}}2^{|\alpha|}V^{\alpha}x_{\alpha},
\end{equation}
where
\begin{equation}\label{eq.at2}
x_{\alpha}\in L^2, \qquad \int_0^1x_{\alpha}(t)\,dt=0, \qquad \sum_{\alpha\in\mathbb{A}}\|x_{\alpha}\|_{L^2}<\infty.
\end{equation}
Moreover, $H_d^1$ is equipped with the norm
$$
\|x\|_{H_d^1}=\inf\sum_{\alpha\in\mathbb{A}}\|x_{\alpha}\|_{L^2},
$$
where the infimum is taken over all representations \eqref{eq.at1}, \eqref{eq.at2}.

The {\it dyadic space of functions with bounded mean oscillation}, $BMO_d$, can be characterized as the set of all functions $x\in L^2$, $\int_0^1x(t)\,dt=0$, such that
\begin{equation}\label{eq.bmo}
\|x\|_{BMO_d}:=\sup_{\alpha\in\mathbb{A}}\|2^{|\alpha|}V^{\alpha*}x\|_{L^2}<\infty,
\end{equation}
where $V^{\alpha*}$ is the adjoint operator for the operator $V^{\alpha}$ in the space $\{x\in L^2:\int_0^1x(t)\,dt=0\}$. One can readily check that, for each multi-index $\alpha$, $k=|\alpha|$, and $j=\sum_{\nu=1}^k\alpha_{\nu}2^{k-\nu}$, we have
$$
2^{|\alpha|}V^{\alpha*}x(t)=x\Bigl(\frac{j+t}{2^k}\Bigr)-\int_0^1x\Bigl(\frac{j+s}{2^k}\Bigr)\,ds, \qquad t\in I.
$$

An advantage of using the norms defined by \eqref{eq.at1}, \eqref{eq.at2}, and \eqref{eq.bmo} is the fact that the bilinear form
$$
\langle x,y \rangle: =\sum_{\alpha\in\mathbb{A}}\frac{\xi_{\alpha}\eta_{\alpha}}{2^{|\alpha|}}
$$
gives precisely the canonical duality $(H_d^1)^*=BMO_d$, where $(\xi_{\alpha})_{\alpha\in\mathbb{A}}$ and $(\eta_{\alpha})_{\alpha\in\mathbb{A}}$ are the Fourier-Haar coefficients of functions $x\in H_d^1$ and $y\in BMO_d$, respectively (see e.g. \cite[Section~1.2]{M}).
Thanks to that, it is easier to calculate the norms of operators acting in the spaces $H_d^1$ and $BMO_d$ than when one uses the original (equivalent) norms
$$
\|x\|_{H_1^d}':=\int_0^1Px(t)\,dt
$$
and
$$
\|x\|_{BMO_d}':=\sup_{\alpha\in\mathbb{A}}\|2^{|\alpha|}V^{\alpha*}x\|_{L^1}
=\sup_{\alpha\in\mathbb{A}}\frac{1}{|I_{\alpha}|}\int_{I_{\alpha}}|x(t)-x_{I_{\alpha}}|\,dt,
$$
where $|I_{\alpha}|=2^{-|\alpha|}$ and $x_{I_{\alpha}}=\frac{1}{|I_{\alpha}|}\int_{I_{\alpha}}x(s)\,ds$.
Observe that the equivalence of the norms
$$
\sup_{\alpha\in\mathbb{A}}\|2^{|\alpha|}V^{\alpha*}x\|_{L^p}, \qquad 1\le p<\infty,
$$
on $BMO_d$ is a consequence of the celebrated John-Nirenberg inequality \cite{JN}.


For any function $x\in L^2$, $\int_0^1x(t)\,dt=0$, the sharp-function $x^{\sharp}$ is defined by
$$
x^{\sharp}(t):=\sup_{\alpha\,:\,I_{\alpha}\ni t}\|2^{|\alpha|}V^{\alpha*}x\|_{L^2}, \qquad t\in I,
$$
where the supremum is taken over all $\alpha\in\mathbb{A}$ such that $t\in I_{\alpha}$. It is well known that $\|x^{\sharp}\|_{L^p}\simeq\|x\|_{L^p},$ for each $2<p<\infty$ \cite[Theorem 1.2.5]{M}.
Finally, for $p=\infty$ we clearly have $\|x^{\sharp}\|_{L^{\infty}}=\|x\|_{BMO_d}$.

\subsection{Haar chaoses}
\label{Haar chaoses}

As was said in Section~\ref{Intro}, a first-order Haar chaos $f\in L^1$ is just a step-function of the form \eqref{Haar chaos}, or equivalently, a function with the Fourier-Haar expansion defined by \eqref{F-H expansion}.
A simple calculation shows then that
$$
c_k=-f(\tfrac{1}{2^k})-\sum_{j=0}^{k-1}2^{k-j-1}f(\tfrac{1}{2^j}),
$$
and conversely,
$$
f(\tfrac{1}{2^k})=-c_k+\sum_{j=0}^{k-1}c_j.
$$
As was said in Section~\ref{Intro}, a key role in the proofs will be played by the generating function $\hat{f}(z)=\sum_{k=0}^{\infty}c_kz^k$ of the sequence of the Fourier-Haar coefficients $(c_k)_{k\ge 0}$ of a  function $f\in\mathcal{H}_{ch}^1$, which will be called the {\it symbol} of $f$. 
There is the following useful link between $\hat{f}$ and the generating function $\check{f}(z)=\sum_{k=0}^{\infty}f(\tfrac{1}{2^k})z^k$ of the sequence of the values of $f$:
$$
(1-z)\check{f}(z)=(2z-1)\hat{f}(z)
$$
\cite[Lemma~5]{AT2018}. By using this equation, we can establish the following result, which makes possible to calculate (up to equivalence) the norms of an integrable first-order Haar chaos $f$ in the above spaces.

\begin{lemma}\label{l.1}
Let $1<p<\infty$. For arbitrary $f\in\mathcal{H}_{ch}^1\cap L^1$ we have
$$
\|f\|_{H_d^1}\simeq\|\hat{f}\|_{A_1^+(\mathbb{D}_{1/2})},
$$
$$
\|f\|_{L^p}\simeq\|\hat{f}\|_{A_p^+(\mathbb{D}_{2^{-1/p}})}, \quad \mbox{with a constant depending on}\;p,
$$
and
$$
\|f\|_{BMO_d}\simeq\|\hat{f}\|_{A_{\infty}^+(\mathbb{D})}.
$$
\end{lemma}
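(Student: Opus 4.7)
The lemma rests on two elementary structural observations about a first-order Haar chaos $f=\sum c_k h_{2^k}$: the function itself is constant with value $a_k:=-c_k+s_k$ (where $s_k:=\sum_{j<k}c_j$) on each dyadic ring $(2^{-k-1},2^{-k}]$, and its Paley function $Pf$ is constant with value $(\sum_{j\le k}|c_j|^2)^{1/2}$ on the same ring. These immediately give the explicit formulas $\|f\|_{L^p}^p=\sum_k 2^{-k-1}|a_k|^p$ and $\|Pf\|_{L^1}=\sum_n 2^{-n-1}(\sum_{k\le n}|c_k|^2)^{1/2}$. For the $BMO_d$ norm, the support structure forces $V^{\alpha*}f=0$ unless $\alpha=0_l$ (any other $I_\alpha$ sits inside some ring on which $f$ is constant), and for $\alpha=0_l$ the decomposition $f|_{(0,2^{-l}]}=s_l+\sum_{k\ge l}c_k h_{2^k}$ combined with Haar orthogonality gives $\|2^l V^{0_l*}f\|_{L^2}^2=\sum_{k\ge l}|c_k|^2\,2^{l-k}$. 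All three equivalences thus reduce to purely discrete comparisons between sums built from $(|c_k|)$ and $(|a_k|)$.

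Two of them are then immediate. For $BMO_d$, the chain $|c_l|^2\le\sum_{k\ge l}|c_k|^2 2^{l-k}\le 2\sup_k|c_k|^2$ delivers $\|f\|_{BMO_d}\simeq\sup_k|c_k|=\|\hat f\|_{A_\infty^+(\mathbb{D})}$. For $H_d^1$, the estimates $|c_n|\le(\sum_{k\le n}|c_k|^2)^{1/2}\le\sum_{k\le n}|c_k|$ together with the one-line Fubini identity $\sum_n 2^{-n}\sum_{k\le n}|c_k|=2\sum_k 2^{-k}|c_k|$ give $\|f\|_{H_d^1}\simeq\|\hat f\|_{A_1^+(\mathbb{D}_{1/2})}$. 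In both cases the constants are absolute.

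The actual work lies in the $L^p$ equivalence, which now reduces to showing $\sum_k 2^{-k}|a_k|^p\simeq_p\sum_k 2^{-k}|c_k|^p$. The direction $\lesssim$ follows from $|a_k|^p\lesssim|c_k|^p+|s_k|^p$ together with the weighted discrete Hardy inequality $\sum_k 2^{-k}|s_k|^p\lesssim_p\sum_k 2^{-k}|c_k|^p$, which in turn is a one-line consequence of Hölder with the weight $2^{-\beta j}$ for any fixed $\beta\in(0,1/p)$ followed by a Fubini swap. The reverse direction is the main obstacle of the entire lemma: the naive estimate obtained by iterating the recursion $c_k=s_k-a_k$ produces a factor $2^k$ and diverges. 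The trick is to first exploit the standing mean-zero condition $\sum_k 2^{-k}a_k=0$ in order to trade the lower tail $\sum_{j<k}2^{-j}a_j$ for the upper tail $-\sum_{j\ge k}2^{-j}a_j$; a short Abel-summation then converts the formal recursion into the clean convolution-type inversion formula
\[
2c_k=-a_k+\sum_{j>k}2^{k-j}a_j, \qquad k\ge 0,
\]
whose kernel decays geometrically away from the diagonal. Applying Hölder with the splitting $2^{-(j-k)}=2^{-(j-k)(1-\delta)}\cdot 2^{-(j-k)\delta}$ for any fixed $\delta\in(0,1-1/p)$ and exchanging the order of summation then yields $\sum_k 2^{-k}|c_k|^p\lesssim_p\sum_k 2^{-k}|a_k|^p$, completing the proof with a constant depending on $p$.
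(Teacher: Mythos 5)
Your proof is correct, and it is genuinely different in route from the paper's: the paper does not prove Lemma~\ref{l.1} directly at all, but obtains it as the case $d=1$ of the more general Lemma~\ref{l.2}, which is quoted from \cite[Proposition 2]{AT2018}. You instead give a self-contained elementary argument: the ring-constancy of $f$ on $(2^{-k-1},2^{-k}]$ with value $a_k=-c_k+\sum_{j<k}c_j$ (this matches the paper's own conversion formulas in Section 2.4), the explicit Paley function, the observation that $V^{\alpha*}f=0$ unless $\alpha=0_l$ together with $\|2^{l}V^{0_l*}f\|_{L^2}^2=\sum_{k\ge l}|c_k|^2 2^{l-k}$, and then two weighted discrete Hardy-type estimates. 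The two delicate points both check out: the Hölder splitting with weight $2^{-\beta j}$, $\beta\in(0,1/p)$, does give $\sum_k 2^{-k}|s_k|^p\lesssim_p\sum_k 2^{-k}|c_k|^p$ after summing the geometric factor $2^{-k(1-\beta p)}$ over $k>j$; and the inversion formula $2c_k=-a_k+\sum_{j>k}2^{k-j}a_j$ is exactly what one gets from $c_k=-a_k-\sum_{j<k}2^{k-j-1}a_j$ after replacing $\sum_{j<k}2^{-j}a_j$ by $-\sum_{j\ge k}2^{-j}a_j$ via the mean-zero condition (legitimate since $f\in L^1$ gives $\sum_j 2^{-j}|a_j|<\infty$), and your condition $\delta\in(0,1-1/p)$ is precisely what makes the inner geometric sum converge in the Fubini swap. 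What your approach buys is a transparent, purely one-variable proof with explicit constants; what the paper's route buys is the polydisk statement for Haar chaoses of every order $d$, which is needed later (e.g.\ in Lemma~\ref{l.31} and Propositions~\ref{l.L2} and \ref{l.Lpg2}) and which your ring analysis does not immediately yield.
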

Lemma \ref{l.1} is a partial case of the next more general Lemma \ref{l.2}. To state it, we observe that an arbitrary Haar chaos of order $d\in\mathbb{N}$
$$
x=\sum_{n\in N_d}\xi_nh_{n}
$$
can be equivalently rewritten as follows:
$$
x=\sum_{k_1,\dots,k_d\ge0}\xi_{k_1,\dots,k_d}V_0^{k_1}V_1\ldots V_1V_0^{k_d}h.
$$
Here, to each $n\in N_d$, $n=2^{i_1}+\ldots+2^{i_d}$, where $i_1>i_2>\dots >i_d\ge 0$, we assign the multi-index $(0_{k_1},1,0_{k_2},1,\dots,1,0_{k_d})$ such that
$$
\begin{cases}
i_1=k_1+k_2+\ldots+k_d+d-1,\\
i_2=k_2+\ldots+k_d+d-2,\\
\vdots\\
i_d=k_d.
\end{cases}
$$
Then, the {\it symbol} of such a function $x$ is the function $\hat{x}$ in the variables $z_1,\dots,z_d$, whose coefficients coincide with the coefficients of $x$, i.e.,
$$
\hat{x}(z_1,\dots,z_d)=\sum_{k_1,\dots,k_d\ge0}\xi_{k_1,\dots,k_d}z_1^{k_1}\ldots z_d^{k_d}.
$$
Let $\mathbb{D}_R^d$ be the polydisk of radius $R$, $\mathbb{D}_R^d:=\{(z_1,\dots,z_d)\in\mathbb{C}^d:|z_1|,\dots,|z_d|<R\}$, and let $A_p^+(\mathbb{D}_R^d)$ be the space of all analytic in $\mathbb{D}_R^d$ functions
$$
u(z_1,\dots,z_d)=\sum_{k_1,\dots,k_d\ge0}a_{k_1,\dots,k_d}z_1^{k_1}\ldots z_d^{k_d}$$ such that
$$
\|u\|_{A_p^+(\mathbb{D}_R^d)}:=\|(a_{k_1,\dots,k_d}R^{k_1+\ldots+k_d})\|_{\ell_p}
=\biggl(\sum_{k_1,\dots,k_d\ge0}
(|a_{k_1,\dots,k_d}|R^{k_1+\ldots+k_d})^p\biggr)^{1/p}<\infty.
$$

\begin{lemma}\cite[Proposition 2]{AT2018}
\label{l.2}
Let $d\in\mathbb{N}$ and $1<p<\infty$. For arbitrary Haar chaos $x\in\mathcal{H}_{ch}^d\cap L^1$ it holds
$$
\|x\|_{H_d^1}\simeq2^{-d}\|\hat{x}\|_{A_1^+(\mathbb{D}^d_{1/2})},
$$
$$
\|x\|_{L^p}\simeq2^{-d/p}\|\hat{x}\|_{A_p^+(\mathbb{D}^d_{2^{-1/p}})},  \quad \mbox{with a constant depending on}\;p,
$$
and
$$
\|x\|_{BMO_d}\simeq\|\hat{x}\|_{A_{\infty}^+(\mathbb{D}^d)}.
$$
\end{lemma}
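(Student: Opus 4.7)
My plan is to reduce the general $d$-th order case to the first-order case (Lemma \ref{l.1}) via a canonical self-similar decomposition, handling $BMO_d$ separately through the adjoint characterization. The key construction is
$$x = \sum_{\gamma} V^{\gamma} y_{\gamma},$$
where $\gamma$ runs over multi-indices of the form $(0_{k_1},1,0_{k_2},1,\ldots,1,0_{k_{d-1}},1)$ with exactly $d-1$ ones and ending in $1$, and
$$y_{\gamma} := \sum_{k_d \geq 0} \xi(\gamma \cdot 0_{k_d})\, V_0^{k_d} h$$
is a first-order Haar chaos. Since any such $\gamma$ is forced to end in $1$, no two $\gamma$'s can satisfy $\gamma_1 = \gamma_2 \delta$, so the intervals $\{I_{\gamma}\}$ form an almost everywhere disjoint partition of $(0,1]$; moreover, $V^{\gamma} y_{\gamma}$ is a scaled copy of $y_\gamma$ supported in $I_{\gamma}$.

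For $L^p$, disjointness of supports and the scaling $\|V^{\gamma} y\|_{L^p}^p = 2^{-|\gamma|}\|y\|_{L^p}^p$ give $\|x\|_{L^p}^p = \sum_{\gamma} 2^{-|\gamma|}\|y_{\gamma}\|_{L^p}^p$. Applying Lemma \ref{l.1} to each $y_{\gamma}$ yields $\|y_{\gamma}\|_{L^p}^p \simeq 2^{-1}\sum_{k_d \ge 0} |\xi(\gamma \cdot 0_{k_d})|^p\, 2^{-k_d}$, and inserting $|\gamma| = k_1 + \cdots + k_{d-1} + (d-1)$ and summing over $\gamma$ reconstructs $2^{-d}\|\hat{x}\|_{A_p^+(\mathbb{D}_{2^{-1/p}}^d)}^p$. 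The $H_d^1$ case is handled identically because an atomic decomposition of each $y_{\gamma}$ lifts to one of $V^{\gamma}y_{\gamma}$ with atoms of disjoint support, preserving additivity of the atomic norm. For $BMO_d$, I use the norm $\|x\|_{BMO_d} = \sup_{\alpha} \|2^{|\alpha|}V^{\alpha*}x\|_{L^2}$ and the identity $V^{\alpha*}V^{\beta} = 2^{-|\alpha|}V^{\gamma}$ when $\beta = \alpha\gamma$, and $0$ otherwise, which yields $2^{|\alpha|}V^{\alpha*}x = \sum_{\gamma}\xi(\alpha\gamma)V^{\gamma}h$. By orthogonality this has squared $L^2$-norm $\sum_{\gamma}|\xi(\alpha\gamma)|^2 2^{-|\gamma|}$, and the choice $\alpha = (0_{k_1},1,\ldots,1,0_{k_{d-1}},1,0_{k_d})$ (with $d-1$ ones) reduces it to the geometric sum $\sum_{m \ge 0} |\xi_{k_1,\ldots,k_{d-1},k_d+m}|^2 2^{-m}$, comparable to $\sup_{m}|\xi_{k_1,\ldots,k_{d-1},k_d+m}|^2$. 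Supremizing over $\alpha$ then gives $\|x\|_{BMO_d}^2 \simeq \sup_{k_1,\ldots,k_d}|\xi_{k_1,\ldots,k_d}|^2 = \|\hat{x}\|_{A_\infty^+(\mathbb{D}^d)}^2$.

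The main obstacle is the first-order base case, Lemma \ref{l.1}. For $L^p$, one computes the Paley function $Pf(t)^2 = \sum_{j=0}^k|c_j|^2$ on $(2^{-k-1},2^{-k}]$ and must verify
$$\sum_{k \ge 0} 2^{-k}\Bigl(\sum_{j=0}^k|c_j|^2\Bigr)^{p/2} \simeq \sum_{k \ge 0}|c_k|^p\, 2^{-k}.$$
The lower bound is trivial (drop all terms but $j=k$); the upper bound follows from the power-mean inequality when $p \le 2$, and for $p > 2$ requires a discrete Hardy-type inequality obtained by H\"older with a geometric weight $w_j = 2^{\alpha(k-j)}$ for any $\alpha \in (0, 2/p)$, so that both emerging geometric sums converge. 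The $BMO_d$ base case (equivalence with $\sup_k|c_k|$) similarly uses the explicit form of $V^{\alpha*}f$ on a Haar chaos, while the $H_d^1$ case follows by the $s = 1/2$ instance of the same power-mean bound. The relation $(1-z)\check{f}(z) = (2z-1)\hat{f}(z)$ provides the bridge between the symbol $\hat{f}$ and the sequence of values $f(2^{-k})$ needed to express the above norms in either coordinate system.
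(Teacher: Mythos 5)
Your argument is correct in outline, but it necessarily takes a different route from the paper: the paper gives no proof of Lemma~\ref{l.2} at all (it is quoted from \cite[Proposition 2]{AT2018}), and Lemma~\ref{l.1} is presented there as the special case $d=1$ of Lemma~\ref{l.2}. You invert this order: you first prove the case $d=1$ directly, via the square-function equivalence $P(L^p)=L^p$ recalled in the preliminaries, the computation $Pf(t)^2=\sum_{j\le k}|c_j|^2$ on $(2^{-k-1},2^{-k}]$, and, for $p>2$, a weighted H\"older (discrete Hardy-type) estimate with weight exponent $\alpha\in(0,2/p)$ so that both geometric series converge -- this base case checks out. You then bootstrap to general $d$ through the decomposition $x=\sum_\gamma V^\gamma y_\gamma$ over multi-indices $\gamma$ with exactly $d-1$ ones ending in $1$; these $I_\gamma$ are indeed pairwise disjoint, and disjointness of supports together with $\|V^\gamma y\|_{L^p}^p=2^{-|\gamma|}\|y\|_{L^p}^p$ reassembles $2^{-d}\|\hat x\|_{A_p^+(\mathbb{D}^d_{2^{-1/p}})}^p$ with a constant depending only on $p$, not on $d$, as the statement requires. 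This is a legitimate self-contained argument, and its uniformity in $d$ is exactly what the applications in Sections 3--5 need.

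Two places should be tightened. For $BMO_d$: your identity $\|2^{|\alpha|}V^{\alpha*}x\|_{L^2}^2=\sum_\gamma|\xi(\alpha\gamma)|^2 2^{-|\gamma|}$ is correct (note the relation ``$V^{\alpha*}V^\beta=0$ otherwise'' holds as you use it, i.e.\ on Haar functions: for $\beta$ a proper prefix of $\alpha$ it is valid because $V^\beta h$ is constant on $I_\alpha$), but evaluating it only at $\alpha$ of the special form $(0_{k_1},1,\dots,1,0_{k_{d-1}},1,0_{k_d})$ yields only the lower bound $\|x\|_{BMO_d}\ge\sup|\xi|$; the upper bound requires all $\alpha$, including those with fewer than $d-1$ ones, where the admissible $\gamma$ form a multi-parameter family. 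It still works, since for $\gamma$ ranging over multi-indices with $m$ ones one has $\sum_\gamma 2^{-|\gamma|}=\sum_{l\ge m}\binom{l}{m}2^{-l}=2$, hence $\|2^{|\alpha|}V^{\alpha*}x\|_{L^2}^2\le 2\sup|\xi|^2$ uniformly in $\alpha$ -- add this line. For $H_d^1$: lifting atomic decompositions of the $y_\gamma$ only gives $\|x\|_{H_d^1}\le\sum_\gamma 2^{-|\gamma|}\|y_\gamma\|_{H_d^1}$; the reverse inequality does not follow from atoms alone. The cleanest repair is to treat $H_d^1$ exactly like $L^p$, using the Paley-norm description $P(L^1)=H_d^1$: since the Haar spectrum of $V^\gamma y_\gamma$ lies over $\{\gamma 0_m:m\ge0\}$ and the $I_\gamma$ are disjoint, one has $Px(t)=P y_\gamma(2^{|\gamma|}t-j)$ on $I_\gamma$, so $\int_0^1 Px\,dt=\sum_\gamma 2^{-|\gamma|}\int_0^1 Py_\gamma\,dt$ is an exact identity, and the $d=1$ case ($p=1$ instance of your power-mean bound plus Fubini) finishes it. With these two repairs the proof is complete.
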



\subsection{Dual function and the biorthogonal system for a system of dilations and translations}\label{dual}

Let $f$ be a Haar chaos of order $1$, and let $\hat{f}(z)=\sum_{k=0}^{\infty}c_kz^k$ be the symbol of $f$. Suppose $c_0\neq0$ (or equivalently,  $f(1)\neq0$). Define the function $\hat{g}$ by $\hat{g}(z)=1/\hat{f}(z)$. A direct computation shows that $\hat{g}(z)=\sum_{k=0}^{\infty}d_kz^k$, where the coefficients $d_k$, $k=0,1,\dots$, are uniquely determined by the following recurrence formulae:
$$
c_0d_0=1, \qquad \sum_{j=0}^kc_{k-j}d_j=0, \quad k=1,2,\dots.
$$
We will call the function $g$, whose symbol is $\hat{g}$, {\it dual} to $f$.

Clearly, the Haar series
$$
g=\sum_{k=0}^{\infty}d_kh_{2^k}=\sum_{k=0}^{\infty}d_kV_0^kh
$$
converges for all $0<t\le1$, which however does not guarantee that $g$ is even an integrable function. It turns out that many properties of  $g$ are closely connected with the certain properties of the system of dilations and translations of $f$, in particular, with its completeness, uniform minimality and basisness. First, we present an explicit expression of functions of the biorthogonal system to the system of dilations and translations of $f$ via the Fourier-Haar coefficients of the dual function $g$.

Suppose a multi-index $\alpha\in\mathbb{A}$ contains precisely $s-1$ of "ones"\:for some $s\in\mathbb{N}$. Then, as in Subsection~\ref{generation}, we can write
$$
\alpha=(0_{k_1},1,0_{k_2},1,\dots,1,0_{k_s}), \quad \mbox{for some}\;\; k_1,\dots,k_s\ge0.
$$
Then, $|\alpha|=k_1+\ldots+k_s+s-1$ and the function
$$
g^{\alpha}:=\sum_{j=0}^{k_s}d_j2^{|\alpha|-j}V_0^{k_1}V_1\dots V_1V_0^{k_{s-1}}V_1V_0^{k_s-j}h,
$$
where $d_j$ are the Fourier-Haar coefficients of the dual function $g$, is a Haar chaos of order $s$. As a result, we obtain a system $\{g^{\alpha}\}_{\alpha\in\mathbb{A}}$.

\begin{lemma}\label{l.3}
Let $f$ be a Haar chaos of order $1$ such that $f\in L^1$ and $f(1)\neq0$.
Then, $\{g^{\alpha}\}_{\alpha\in\mathbb{A}}$ is the biorthogonal system to the system of dilations and translations $\{f_{\alpha}\}_{\alpha\in\mathbb{A}}$, i.e.,
$$
g^{\alpha}(f_{\beta})=
(f_{\beta},g^{\alpha})=\int_0^1f_{\beta}(t)g^{\alpha}(t)\,dt=\delta_{\alpha\beta}, \qquad \alpha,\beta\in\mathbb{A}.
$$
\end{lemma}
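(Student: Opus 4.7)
The plan is to compute the pairing $(f_\beta, g^\alpha)$ directly by expanding both factors in the Haar basis $\{V^\gamma h\}_{\gamma\in\mathbb{A}}$ and reducing the claim to the defining recurrence $c_0 d_0 = 1$, $\sum_{j=0}^{k}c_{k-j}d_j=0$ ($k\geq 1$) of the coefficients of the dual function.

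First I would write $\beta=(0_{l_1},1,0_{l_2},1,\dots,1,0_{l_r})$ and apply $V^{\beta}$ termwise to the Haar expansion $f=\sum_{k\ge 0}c_k V_0^k h$ (justified by $L^1$-convergence, since $f\in L^1$ by hypothesis). This gives
$$
f_\beta \;=\; \sum_{k=0}^{\infty} c_k\, V_0^{l_1}V_1\cdots V_1 V_0^{l_r+k}h,
$$
displaying $f_\beta$ as a Haar chaos of order $r$ whose coefficients sit on the multi-indices $\gamma_k:=(0_{l_1},1,\dots,1,0_{l_r+k})$, $k\geq 0$. By construction $g^{\alpha}$ is a Haar chaos of order $s$ whose coefficients $d_j\,2^{|\alpha|-j}$ sit on the multi-indices $\delta_j:=(0_{k_1},1,\dots,1,0_{k_s-j})$, $0\leq j\leq k_s$. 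Since $g^\alpha$ is a finite sum, termwise integration against the $L^1$-series for $f_\beta$ is legitimate.

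Next I would apply the Haar orthogonality relations $(V^{\gamma}h,V^{\gamma'}h)=2^{-|\gamma|}\delta_{\gamma\gamma'}$ to collapse the double sum to those pairs $(k,j)$ for which $\gamma_k=\delta_j$. Such a match forces $r=s$ and $l_\nu=k_\nu$ for $\nu=1,\dots,s-1$; if these equalities fail, no terms contribute and $(f_\beta,g^\alpha)=0$. Assuming they hold, set $N:=k_s-l_r$. The matching condition becomes $k+j=N$ with $k\geq 0$ and $0\leq j\leq k_s$. For $N<0$ (which forces $\alpha\ne\beta$) no such pair exists, giving $(f_\beta,g^\alpha)=0$. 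For $N\geq 0$ the bound $j\leq N\leq k_s$ is automatic, and the normalization factor $2^{|\alpha|-j}$ in $g^\alpha$ cancels exactly against $2^{-|\gamma_k|}=2^{-(|\beta|+k)}=2^{-(|\alpha|-j)}$, leaving
$$
(f_\beta,g^\alpha)\;=\;\sum_{j=0}^{N} c_{N-j}\, d_j.
$$

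The conclusion is then immediate from the recurrence defining $\{d_j\}$: for $N=0$ we have $\alpha=\beta$ and the sum equals $c_0d_0=1$; for $N\geq 1$ we have $k_s\ne l_r$ (so $\alpha\ne\beta$) and the sum vanishes. There is no genuine analytic obstacle here; the main point requiring care is purely combinatorial, namely the bookkeeping of the multi-index matching and verifying that the factor $2^{|\alpha|-j}$ placed in the definition of $g^\alpha$ is exactly what is needed to compensate the Haar normalization $2^{-|\gamma|}$. This cancellation is in fact the very motivation for the prefactor in the formula for $g^\alpha$.
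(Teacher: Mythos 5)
Your proposal is correct and follows essentially the same route as the paper: expand $f_\beta$ termwise in the Haar functions, use Haar orthogonality to force $r=s$, matching of the leading blocks, and the relation $k+j=k_s-l_r$, and then reduce the resulting sum to the recurrence $c_0d_0=1$, $\sum_{j=0}^{N}c_{N-j}d_j=0$; your case split $N<0$, $N=0$, $N\ge1$ is the paper's split $j_s>k_s$, $j_s=k_s$, $j_s<k_s$ in different notation. The explicit verification that the prefactor $2^{|\alpha|-j}$ cancels the Haar normalization $2^{-|\gamma|}$ is a correct (and slightly more detailed) rendering of the same computation.
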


\begin{proof}
For each multi-index $\beta=(0_{j_1},1,0_{j_2},1,\dots,1,0_{j_r})$ we have
$$
f_{\beta}=\sum_{k=0}^{\infty}c_kV_0^{j_1}V_1\dots V_1V_0^{j_{r-1}}V_1V_0^{j_r+k}h.
$$
If $r\neq s$, then the Haar chaoses $g^{\alpha}$ and $f_{\beta}$ are of  different order, which implies that $g^{\alpha}(f_{\beta})=0$. Let now $r=s$. Then,
$$
g^{\alpha}(f_{\beta})=\sum_{j=0}^{k_s}\sum_{k=0}^{\infty}c_kd_j2^{|\alpha|-j}
(V_0^{j_1}V_1\dots  V_1V_0^{j_{s-1}}V_1V_0^{j_s+k}h,V_0^{k_1}V_1\dots V_1V_0^{k_{s-1}}V_1V_0^{k_s-j}h).
$$
Since the Haar functions are orthogonal, any nonzero term of this sum has indices $j_1=k_1,\dots,j_{s-1}=k_{s-1}$ and $j_s+k=k_s-j$. The latter equality is impossible if $j_s>k_s$, and hence in this case we have $g^{\alpha}(f_{\beta})=0$. Assuming that $j_s=k_s$, we get $j=k=0$ and $g^{\alpha}(f_{\beta})=c_0d_0=1$. Finally,  if $j_s<k_s$, then
$$
g^{\alpha}(f_{\beta})=\sum_{j=0}^{k_s-j_s}c_{k_s-j_s-j}d_j=0.
$$
Summarizing, we conclude that $g^{\alpha}(f_{\beta})=\delta_{\alpha\beta}$ for all $\alpha,\beta\in\mathbb{A}$.
\end{proof}

\begin{lemma}\label{l.31}
Suppose that $f\in\mathcal{H}_{ch}^1\cap L^p$, $1<p<\infty$ and  $\frac{1}{p}+\frac{1}{p'}=1$.

Then, the symbol $\hat{g}(z)$ of the dual function $g$ belongs to the space $A_{p'}^+(\mathbb{D}_{2^{-1/p}})$ if and only if
$$
\sup_{\alpha\in\mathbb{A}}\|f_{\alpha}\|_{L^p}\|g^{\alpha}\|_{L^{p'}}<\infty.
$$
\end{lemma}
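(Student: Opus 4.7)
The plan is to reduce the two norms $\|f_\alpha\|_{L^p}$ and $\|g^\alpha\|_{L^{p'}}$ to explicit expressions in terms of the Fourier--Haar coefficients of $f$ and of $g$ by means of the polydisk-estimates of Lemma \ref{l.2}, and then to observe that in the product $\|f_\alpha\|_{L^p}\|g^\alpha\|_{L^{p'}}$ all dependence on the multi-index $\alpha$ collapses to dependence on the single parameter $k_s$ (the length of the last block of zeros). The supremum in $\alpha$ then becomes a supremum in $k_s\in\{0,1,2,\dots\}$ of partial sums of the series $\sum_j|d_j|^{p'}2^{-p'j/p}$, and finiteness of this supremum is equivalent to $\hat g\in A_{p'}^+(\mathbb{D}_{2^{-1/p}})$.

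Concretely, I would first write any $\alpha\in\mathbb{A}$ with $s-1$ ones as $\alpha=(0_{k_1},1,0_{k_2},1,\dots,1,0_{k_s})$, so that $|\alpha|=k_1+\dots+k_s+s-1$. Both $f_\alpha=V^\alpha f$ and $g^\alpha$ are then Haar chaoses of order $s$, and a direct computation from the defining formulas yields the symbols
\[
\hat{f_\alpha}(z_1,\dots,z_s)=z_1^{k_1}\cdots z_{s-1}^{k_{s-1}}z_s^{k_s}\,\hat f(z_s),
\]
\[
\hat{g^\alpha}(z_1,\dots,z_s)=2^{|\alpha|}z_1^{k_1}\cdots z_{s-1}^{k_{s-1}}z_s^{k_s}\sum_{j=0}^{k_s}d_j(2z_s)^{-j}.
\]

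Next, I would apply Lemma \ref{l.2} on the polydisks $\mathbb{D}_R^s$ with $R=2^{-1/p}$ and $\mathbb{D}_{R'}^s$ with $R'=2^{-1/p'}$. The relevant algebraic identity is $2R'=2^{1-1/p'}=2^{1/p}$, equivalently $(2R')^{-p'}=R^{p'}$, which transforms the inner finite sum in $\hat{g^\alpha}$ into a partial sum of the series defining $\|\hat g\|_{A_{p'}^+(\mathbb{D}_R)}$. After collecting exponents (using $2^{|\alpha|}(R')^{|\alpha|}=2^{|\alpha|/p}$), the straightforward computation gives
\[
\|f_\alpha\|_{L^p}\simeq 2^{-|\alpha|/p}\,\|\hat f\|_{A_p^+(\mathbb{D}_R)},
\qquad
\|g^\alpha\|_{L^{p'}}\simeq 2^{|\alpha|/p}\,\biggl(\sum_{j=0}^{k_s}|d_j|^{p'}R^{p'j}\biggr)^{1/p'},
\]
with constants that do not depend on $\alpha$.

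Multiplying, the factors $2^{\pm|\alpha|/p}$ cancel and
\[
\|f_\alpha\|_{L^p}\,\|g^\alpha\|_{L^{p'}}\simeq \|\hat f\|_{A_p^+(\mathbb{D}_R)}\biggl(\sum_{j=0}^{k_s}|d_j|^{p'}R^{p'j}\biggr)^{1/p'},
\]
so the right-hand side depends on $\alpha$ only through $k_s$. Since $f\in L^p$ is a nonzero first-order Haar chaos (the existence of $g$ requires $c_0\neq 0$), Lemma \ref{l.1} gives that $\|\hat f\|_{A_p^+(\mathbb{D}_R)}$ is a finite positive constant; hence $\sup_{\alpha\in\mathbb{A}}\|f_\alpha\|_{L^p}\|g^\alpha\|_{L^{p'}}<\infty$ if and only if the increasing sequence of partial sums $\sum_{j=0}^{k}|d_j|^{p'}R^{p'j}$ is bounded in $k$, i.e.\ $\sum_{j=0}^\infty|d_j|^{p'}2^{-p'j/p}<\infty$, which is exactly $\hat g\in A_{p'}^+(\mathbb{D}_{2^{-1/p}})$.

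The only real obstacle is bookkeeping the exponents without slips; the conceptual content lies in the identity $(2R')^{-p'}=R^{p'}$ that links the weight $2^{|\alpha|-j}$ appearing in the defining formula for $g^\alpha$ to the weight $R^{p'j}$ appearing in the $A_{p'}^+(\mathbb{D}_R)$-norm of $\hat g$, and in recognizing that the $\alpha$-dependence beyond $k_s$ is trivial and cancels in the product.
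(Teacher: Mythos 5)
Your proof is correct and follows essentially the same route as the paper: apply Lemma~\ref{l.2} to the order-$s$ chaos $g^{\alpha}$ (and the scaling of $f_\alpha$) to get $\|g^{\alpha}\|_{L^{p'}}\simeq 2^{|\alpha|/p}\bigl(\sum_{j=0}^{k_s}|d_j|^{p'}2^{-p'j/p}\bigr)^{1/p'}$ and $\|f_{\alpha}\|_{L^p}\simeq 2^{-|\alpha|/p}\|f\|_{L^p}$, so the factors $2^{\pm|\alpha|/p}$ cancel and the supremum over $\alpha$ reduces to the supremum of the partial sums defining $\|\hat g\|_{A_{p'}^+(\mathbb{D}_{2^{-1/p}})}$. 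The only cosmetic difference is that the paper uses the exact dilation identity $\|f_{\alpha}\|_{L^p}=2^{-|\alpha|/p}\|f\|_{L^p}$ where you invoke Lemmas~\ref{l.1}--\ref{l.2}, and your explicit remark that $c_0\neq0$ guarantees $\|f\|_{L^p}>0$ is a point the paper leaves implicit.
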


\begin{proof}
We set $R_p=2^{-1/p}$ and $R_{p'}=2^{-1/p'}$. Then, using the notation, adopted just before Lemma \ref{l.3}, and applying Lemmas \ref{l.2} and \ref{l.1} to the function $g^{\alpha}\in\mathcal{H}_{ch}^s$, we obtain
$$
\|g^{\alpha}\|_{L^{p'}}\simeq2^{-s/p'}\|(d_j2^{|\alpha|-j}R_{p'}^{k_1+\ldots+k_s-j})_{j=0}^{k_s}\|_{\ell^{p'}}
\simeq2^{|\alpha|/p}\|(d_jR_p^j)_{j=0}^{k_s}\|_{\ell^{p'}}.
$$
Consequently, since $\|f_{\alpha}\|_{L^p}=2^{-|\alpha|/p}\|f\|_{L^p}$, it follows
$$
\sup_{\alpha\in\mathbb{A}}\|f_{\alpha}\|_{L^p}\|g^{\alpha}\|_{L^{p'}}
\simeq\sup_{k\ge0}\|(d_jR_p^j)_{j=0}^k\|_{\ell^{p'}}=\|\hat{g}\|_{A_{p'}^+(\mathbb{D}_{2^{-1/p}})}.
$$


\end{proof}

The last result of this section gives a condition of completeness of a system of dilations and translations $\{f_{\alpha}\}$ in $L^p$.

Let us observe that completeness is understood here up to a constant. More precisely, we say that a system $\{f_{\alpha}\}$ is {\it complete} in $L^p$, $1<p<\infty$, whenever $[V^{\alpha}f]_{L^p}=[V^{\alpha}h]_{L^p}=\{x\in L^p:\int_0^1x(t)\,dt=0\}$. Equivalently, from the conditions $y\in L^{p'}$, $\tfrac{1}{p}+\tfrac{1}{p'}=1$, and $(V^{\alpha}f,y)=0$ for all $\alpha\in\mathbb{A}$ it follows that $y\equiv const$.


\begin{lemma}\label{l.4}
Let $1<p<\infty$, and let $f\in\mathcal{H}_{ch}^1\cap L^p$.
The following conditions are equivalent:

(i) the system of dilations and translations of $f$ is complete in the space $L^p$;

(ii) for each $d\in\mathbb{N}$ the system $\{f_n\}_{n\in N_d}$ is complete in the subspace $\mathcal{H}_{ch}^d\cap L^p$;

(iii) the system $\{f_{2^k}\}_{k=0}^\infty$ is complete in the subspace $\mathcal{H}_{ch}^1\cap L^p$;

(iv) the system $\{z^k\hat{f}(z)\}_{k=0}^{\infty}$ is complete in the space $A_p^+(\mathbb{D}_{2^{-1/p}})$.
\end{lemma}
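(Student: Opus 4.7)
The plan is to establish the chain (iii)~$\Leftrightarrow$~(iv), (ii)~$\Leftrightarrow$~(iii), and (i)~$\Leftrightarrow$~(ii). The equivalence (iii)~$\Leftrightarrow$~(iv) will be essentially a translation of language: by Lemma~\ref{l.1}, the correspondence $f\leftrightarrow\hat{f}$ is a Banach space isomorphism between $\mathcal{H}_{ch}^1\cap L^p$ and $A_p^+(\mathbb{D}_{2^{-1/p}})$, and since $f_{2^k}=V_0^kf$ has symbol $z^k\hat{f}(z)$, the two completeness statements correspond directly. The implication (ii)~$\Rightarrow$~(iii) is trivial because $N_1=\{2^k:k\ge0\}$.

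For (iii)~$\Rightarrow$~(ii), I will pass to symbols via Lemma~\ref{l.2}, identifying $\mathcal{H}_{ch}^d\cap L^p$ with $A_p^+(\mathbb{D}^d_{2^{-1/p}})$. The first task is the computation of the symbol of $f_n$ for $n\in N_d$ encoded by $(0_{k_1},1,0_{k_2},\dots,1,0_{k_d})$: starting from $f_n=\sum_k c_kh_{2^kn}$ and observing that passing from $n$ to $2^kn$ shifts $i_d\mapsto i_d+k$ but leaves the preceding multi-index components fixed, one obtains
$$
\hat{f}_n(z_1,\dots,z_d)=z_1^{k_1}\cdots z_{d-1}^{k_{d-1}}z_d^{k_d}\hat{f}(z_d).
$$
Completeness of $\{f_n\}_{n\in N_d}$ in $\mathcal{H}_{ch}^d\cap L^p$ then reduces to the completeness of the above system in $A_p^+(\mathbb{D}^d_{2^{-1/p}})$. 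I will then run a slicing argument on the dual side: a continuous linear functional on $A_p^+(\mathbb{D}^d_{2^{-1/p}})$ is represented by a coefficient sequence $(b_{k_1,\dots,k_d})$ in the appropriate weighted $\ell^{p'}$-space, and the vanishing of this functional on the system says, for each fixed $(k_1,\dots,k_{d-1})$, that the slice $(b_{k_1,\dots,k_{d-1},m})_{m\ge0}$ annihilates $\{z^m\hat{f}(z)\}_{m\ge0}$ in the one-variable space $A_p^+(\mathbb{D}_{2^{-1/p}})$. By (iv) the slice must vanish, and varying $(k_1,\dots,k_{d-1})$ yields the vanishing of the whole functional.

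The equivalence (i)~$\Leftrightarrow$~(ii) will rely on the \emph{chaos decomposition} of a mean-zero $y\in L^{p'}$: the unconditional convergence of the Haar expansion in $L^{p'}$ permits the regrouping $y=\sum_{d\ge1}y^{(d)}$ with $y^{(d)}\in\mathcal{H}_{ch}^d\cap L^{p'}$ obtained by collecting the Haar terms with indices in $N_d$. Since $f_n\in\mathcal{H}_{ch}^d\cap L^p$ for $n\in N_d$ and the sets $N_d$ are pairwise disjoint, Haar-orthogonality gives $(f_n,y)=(f_n,y^{(d)})$ for $n\in N_d$. With this observation, (ii)~$\Rightarrow$~(i) follows by subtracting the mean from an annihilator and applying (ii) chaos-by-chaos, while (i)~$\Rightarrow$~(ii) follows by extending an annihilator $y\in\mathcal{H}_{ch}^d\cap L^{p'}$ of $\{f_n\}_{n\in N_d}$ for free to an annihilator of the entire system $\{V^{\alpha}f\}_{\alpha\in\mathbb{A}}$ (the extra inner products vanish by Haar-orthogonality), invoking (i), and using the zero mean of $y$ to conclude $y=0$.

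The main obstacle is the slicing step in (iii)~$\Rightarrow$~(ii): one must verify that the dual pairing between $A_p^+(\mathbb{D}^d_{2^{-1/p}})$ and its dual decouples cleanly in the last variable so that the one-dimensional completeness from (iv) can be applied on each slice. Everything else amounts to careful bookkeeping with Lemmas~\ref{l.1} and~\ref{l.2} and to exploiting the fact, already central to the paper, that the Haar system is an unconditional basis of $L^{p'}$ for $1<p'<\infty$.
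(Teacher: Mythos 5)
Your argument is correct, but it takes a genuinely different route from the paper on the two substantive equivalences. For (iii)$\Rightarrow$(ii) the paper works entirely on the primal side: if $\sum_{k}a_{n,k}V_0^kf\to h$ in $L^p$, it simply applies the bounded operators $V_0^{k_1}V_1\cdots V_1V_0^{k_d}$ to this approximation, so every Haar function of chaos order $d$ lies in $[f_n,\,n\in N_d]_{L^p}$ -- no symbols and no duality. Your slicing argument in $A_p^+(\mathbb{D}^d_{2^{-1/p}})$ is also valid (the dual of this weighted $\ell^p$-space over $\mathbb{Z}_+^d$ is the corresponding weighted $\ell^{p'}$-space, each slice of an annihilating functional lies in the one-variable dual, and condition (iv) forces each slice to vanish), but it requires the bookkeeping of Lemma~\ref{l.2} together with surjectivity of the symbol map, which the paper's two-line operator argument avoids. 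For the equivalence of (i) with the remaining conditions the paper again argues primally: (i)$\Rightarrow$(iii) via the bounded averaging projection $Q$ onto $\mathcal{H}_{ch}^1\cap L^p$, which kills every $V^{\alpha}f$ whose index contains a one, and (iii)$\Rightarrow$(i) via the multishift-invariance of $[V^{\alpha}f]_{L^p}$ together with $h\in[V_0^kf]_{L^p}$. You instead decompose an annihilator $y\in L^{p'}$ into chaos blocks $y^{(d)}$ using unconditionality of the Haar system in $L^{p'}$ and apply (ii) block by block; in your direction (i)$\Rightarrow$(ii) you should make explicit that an annihilator of $\{f_n\}_{n\in N_d}$ can be taken in $\mathcal{H}_{ch}^d\cap L^{p'}$ (Hahn--Banach in $L^{p'}$ followed by the bounded $N_d$-coordinate projection, again by unconditionality), though this is routine. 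In sum, both proofs are sound: the paper's is shorter and more elementary, relying only on the boundedness of the dilation operators and of one projection, while yours trades that for a duality/annihilator formalism that matches the paper's stated dual characterization of completeness and makes the role of (iv) on each slice fully explicit.
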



\begin{proof}
Let us prove first the equivalence $(iii)\Leftrightarrow(ii)$.

If $h\in[f_{2^k},k\ge 0]_{L^p}=[V_0^kf,k\ge 0]_{L^p}$, then $\sum_{k=0}^{n-1}a_{n,k}V_0^kf\to h$ in $L^p$ as $n\to\infty$ for some $a_{n,k}\in\mathbb{C}$. Therefore, for any integers $k_1,\dots,k_d\ge 0$ we have
$$
V_0^{k_1}V_1\ldots V_1V_0^{k_d}h=V_0^{k_1}V_1\ldots V_1V_0^{k_d}\lim_{n\to\infty}\sum_{k=0}^{n-1}a_{n,k}V_0^kf,
$$
whence $\mathcal{H}_{ch}^d\cap L^p=[V_0^{k_1}V_1\ldots V_1V_0^{k_d}h]_{L^p}=[V_0^{k_1}V_1\ldots V_1V_0^{k_d}f]_{L^p}$.
Since the converse implication $(ii)\Rightarrow(iii)$ is obvious, everything is done.

$(i)\Rightarrow(iii)$. It is well known that the average operator
$$
Qx(t)=\sum_{k=0}^\infty 2^{k+1}\int_{\frac{1}{2^{k+1}}}^{\frac{1}{2^k}}x(s)\,ds\cdot \chi_{(\tfrac{1}{2^{k+1}},\tfrac{1}{2^k}]}(t)
$$
is bounded in $L^p$ (see e.g. \cite[II.3.2]{KPS}). Moreover, one can easily see that $Q$ is a projection, whose image in $L^p$ is the subspace $\mathcal{H}_{ch}^1\cap L^p$. Therefore, if
$$
P_n(f)=\sum_{|\alpha|<n}a_n(\alpha)V^{\alpha}f, \qquad a_n(\alpha)\in\mathbb{C}, \qquad n=1,2,\dots,
$$
are polynomials, which approach the Haar function $h$ in $L^p$, that is, $\|P_n(f)-h\|_{L^p}\to 0$, then we have
$$
h=Qh=\lim_{n\to\infty}QP_n(f)=\lim_{n\to\infty}\sum_{k=0}^{n-1}a_n(0_k)V_0^kf.
$$
This implies that $[V_0^kf]_{L^p}=[V_0^kh]_{L^p}=\mathcal{H}_{ch}^1\cap L^p$, i.e., $(iii)$ is proved.

The converse implication $(iii)\Rightarrow(i)$ is almost obvious. Indeed, the subspace $[V^{\alpha}f]_{L^p}$ is invariant with respect to the action of the Haar multishift. Hence, from the fact that $h\in[V_0^kf]_{L^p}\subset[V^{\alpha}f]_{L^p}$ it follows immediately $[V^{\alpha}f]_{L^p}=[V^{\alpha}h]_{L^p}$.

Lastly, according to Lemma~\ref{l.3}, the mapping $f\mapsto\hat{f}$ is an isomorphism of the space $\mathcal{H}_{ch}^1\cap L^p$ onto $A_p^+(\mathbb{D}_{2^{-1/p}})$ satisfying the condition: $\widehat{V_0^kf}(z)=z^k\hat{f}(z)$.
This implies clearly the equivalence $(iii)\Leftrightarrow(iv)$.
\end{proof}

\begin{remark}\rm
\label{rem: vanish}
One can easily show that, for each $\lambda\in\mathbb{D}_R$, the functional $u\mapsto u(\lambda)$ is bounded on the space $A_p^+(\mathbb{D}_R)$.
Therefore, each of the conditions of Lemma \ref{l.4} fails if the symbol
$\hat{f}$ vanishes in the disk $\mathbb{D}_{2^{-1/p}}$. A simple example of such a sort is the function $f=h_2$. Clearly, we have $\hat{f}(z)=z$ and $[f_{2^k}:\,k\ge0]=[h_{2^k}:\,k\ge1]\ne [h_{2^k}:\,k\ge0]$.
\end{remark}


\begin{remark}\rm
Lemma \ref{l.4} holds also for the limiting exponents $p=1$ and $p=\infty$. In the former case, this result involves, instead of $L^p$ and $A_p^+(\mathbb{D}_{2^{-1/p}})$, the spaces $H_d^1$ and $A_{1}^+(\mathbb{D}_{1/2})$. In the latter one, $L^p$ should be replaced with the space $[V^{\alpha}h]_{BMO_d}=VMO_d$, the separable part of $BMO_d$, rather than $BMO_d$ itself. Recall that $VMO_d$ consists of all functions $x$ such that
$$
\lim_{k\to\infty}\sup_{|\alpha|=k}\|2^{|\alpha|}V^{\alpha*}x\|_{L^2}=0.
$$
Respectively, $A_p^+(\mathbb{D}_{2^{-1/p}})$ should be replaced with the  linear span $[z^k]_{A_{\infty}^+(\mathbb{D})}$ rather than the (non-separable) space $A_{\infty}^+(\mathbb{D}).$ It can be readily checked that $[z^k]_{A_{\infty}^+(\mathbb{D})}$ is the space of all analytic in $\mathbb{D}$ functions $u(z)=\sum_{k=0}^{\infty}a_kz^k$ satisfying the condition $\lim_{k\to\infty}a_k=0$.

\end{remark}


Proceeding now with the proofs, we will consider the cases $p=2$, $p>2$ and $1<p<2$ separately, to emphasize the difference in results obtained. There are two reasons to start with the situation when $p=2$. First, the Hilbertian case is the simplest one, second, "$L^2$-methods" will be intensively used later to deduce similar results for the $L^p$-spaces, $p\ne 2$, the spaces $BMO_d$ and $H_d^1$.  

\section{Case $p=2$.}\label{s.oper}

\begin{propos}\label{l.L2}
Let $f\in \mathcal{H}_{ch}^1 \cap L^2$. The following conditions are equivalent:

(i) the operator $T_f$ is bounded in the space $L^2$;

(ii) for every $d\in\mathbb{N}$ $T_f$ is bounded on the subspace of Haar chaoses $\mathcal{H}_{ch}^d \cap L^2$;

(iii) $T_f$ is bounded on the subspace of first-order Haar chaoses $\mathcal{H}_{ch}^1 \cap L^2$;

(iv) the symbol $\hat{f}(z)$ is a bounded analytic function on the disk $\mathbb{D}_{2^{-1/2}}$.
\end{propos}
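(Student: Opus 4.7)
The plan is to translate the action of $T_f$ on Haar chaoses into a multiplication operator on the symbol side via Lemma~\ref{l.2}, and then reduce the boundedness question to the classical multiplier theorem on the Hardy space $H^2(\mathbb{D})$. The two easy implications $(i)\Rightarrow(iii)$ and $(ii)\Rightarrow(iii)$ follow immediately from the fact that $\mathcal{H}_{ch}^1\cap L^2$ is $T_f$-invariant: indeed $T_fV_0^kh=V_0^kf\in\mathcal{H}_{ch}^1$ for every $k\ge 0$, since $f\in\mathcal{H}_{ch}^1$.

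First I would establish the key symbolic identity. Writing a chaos of order $d$ in the form $x=\sum\xi_{k_1,\dots,k_d}V_0^{k_1}V_1\cdots V_1V_0^{k_d}h$ and using both the commutation of $T_f$ with $V_0$ and $V_1$ and the expansion $f=\sum_{j\ge0}c_jV_0^jh$, one computes
\[
T_fx=\sum_{k_1,\dots,k_d}\sum_{j\ge0}c_j\,\xi_{k_1,\dots,k_d}\,V_0^{k_1}V_1\cdots V_1V_0^{k_d+j}h,
\]
whose symbol is precisely $\hat{f}(z_d)\,\hat{x}(z_1,\dots,z_d)$. By Lemma~\ref{l.2} the map $x\mapsto\hat{x}$ is an isomorphism (up to the scalar $2^{-d/2}$, which cancels in any operator-norm computation) from $\mathcal{H}_{ch}^d\cap L^2$ onto $A_2^+(\mathbb{D}_{2^{-1/2}}^d)$. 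Hence
\[
\|T_f\|_{\mathcal{H}_{ch}^d\cap L^2}\simeq\|M_{\hat{f}(z_d)}\|_{A_2^+(\mathbb{D}_{2^{-1/2}}^d)}
\]
with constants independent of $d$.

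Next I would settle $(iii)\Leftrightarrow(iv)$. The substitution $u(z)\mapsto u(2^{-1/2}w)$ is an isometric isomorphism of $A_2^+(\mathbb{D}_{2^{-1/2}})$ onto $H^2(\mathbb{D})$, under which multiplication by $\hat{f}(z)$ becomes multiplication by $\hat{f}(2^{-1/2}w)$. The classical description of bounded multipliers on $H^2(\mathbb{D})$ (recalled in the paper as \cite[Chapter~17, Problem~147]{H2}) then asserts that this operator is bounded iff $\hat{f}\in H^{\infty}(\mathbb{D}_{2^{-1/2}})$, with operator norm equal to $\|\hat{f}\|_{H^{\infty}(\mathbb{D}_{2^{-1/2}})}$. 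Combined with the $d=1$ case of the first step, this yields $(iii)\Leftrightarrow(iv)$. For $(iv)\Rightarrow(ii)$, I would exploit the Hilbert tensor product identification $A_2^+(\mathbb{D}_{2^{-1/2}}^d)=A_2^+(\mathbb{D}_{2^{-1/2}})^{\otimes d}$ (equivalently, a straightforward Fubini-style rearrangement of the weighted $\ell^2$-sum): since $M_{\hat{f}(z_d)}$ acts only on the last tensor factor, its norm on the polydisk space coincides with the one-variable multiplier norm, and is therefore controlled by $\|\hat{f}\|_{H^{\infty}(\mathbb{D}_{2^{-1/2}})}$ \emph{uniformly} in $d$. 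This gives (ii). Finally, $(ii)\Rightarrow(i)$ follows from the orthogonal decomposition
\[
\Big\{x\in L^2:\int_0^1x(t)\,dt=0\Big\}=\bigoplus_{d=1}^{\infty}\bigl(\mathcal{H}_{ch}^d\cap L^2\bigr),
\]
a consequence of $\mathbb{N}=\bigsqcup_d N_d$ and the orthogonality of the Haar system, together with the $T_f$-invariance of each summand and the uniform bound just obtained.

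The main obstacle, though not a deep one, is the first step: verifying carefully the symbolic identity $\widehat{T_fx}(z_1,\dots,z_d)=\hat{f}(z_d)\,\hat{x}(z_1,\dots,z_d)$ and checking that Lemma~\ref{l.2} transfers operator norms cleanly, with constants independent of $d$, between $T_f$ acting on Haar chaoses of order $d$ and the one-variable multiplication operator $M_{\hat{f}}$ on $A_2^+(\mathbb{D}_{2^{-1/2}})$. Once this reduction is in place, everything else is either classical (the Hardy-space multiplier theorem) or structural (the polydisk norm factorization and the orthogonal chaos decomposition of $L^2$).
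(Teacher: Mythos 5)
Your proposal is correct and follows essentially the same route as the paper: the symbolic identity $\widehat{T_fx}(z_1,\dots,z_d)=\hat f(z_d)\hat x(z_1,\dots,z_d)$, the identification $A_2^+(\mathbb{D}_{2^{-1/2}}^d)=H^2(\mathbb{D}_{2^{-1/2}}^d)$ combined with the classical multiplier theorem $M(H^2)=H^{\infty}$, uniformity in $d$, and the orthogonal chaos decomposition of mean-zero $L^2$ for $(ii)\Rightarrow(i)$. The only cosmetic difference is how the uniform-in-$d$ bound is obtained (you tensorize the weighted $\ell^2$-norm, the paper estimates the torus integral directly), and the norm transfer you flag as the main obstacle is in fact exact for $p=2$: the map $x\mapsto 2^{(d-1)/2}\hat x$ is an isometry, as the paper computes.
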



\begin{proof}
First of all, since $f\in \mathcal{H}_{ch}^1$, then for a Haar chaos of  order $d\in\mathbb{N}$
$$
x=\sum_{k_1,\dots,k_d\ge0}\xi_{k_1,\dots,k_d}V_0^{k_1}V_1\dots V_1V_0^{k_d}h
$$
we have
\begin{multline*}
T_fx=\sum_{k_1,\dots,k_d\ge0}\xi_{k_1,\dots,k_d}V_0^{k_1}V_1\dots V_1V_0^{k_d}f
=\sum_{k_1,\dots,k_d\ge0}\sum_{j=0}^{\infty}\xi_{k_1,\dots,k_d}c_j V_0^{k_1}V_1\dots V_1V_0^{k_d+j}h\\
=\sum_{k_1,\dots,k_d\ge0}\biggl(\sum_{j=0}^{k_d}\xi_{k_1,\dots,k_{d-1},k_d-j}c_j\biggr)V_0^{k_1}V_1\dots V_1V_0^{k_d}h.
\end{multline*}
In the language of symbols, this can be rewritten as follows:
\begin{equation}\label{symbols}
\widehat{T_fx}(z_1,\dots,z_d)=\hat{x}(z_1,\dots,z_d)\hat{f}(z_d).
\end{equation}

In particular, from \eqref{symbols} it follows that the boundedness of the operator $T_f$ in the subspace $\mathcal{H}_{ch}^1 \cap L^2$ is equivalent to the boundedness of the multiplication operator by the function $\hat{f}(z)$ in the space $A_2^+(\mathbb{D}_{2^{-1/2}})=H^2(\mathbb{D}_{2^{-1/2}})$, i.e., to the fact that $\hat{f}\in M_2^+(\mathbb{D}_{2^{-1/2}})=H^{\infty}(\mathbb{D}_{2^{-1/2}})$. This yields the equivalence $(iii)\Leftrightarrow(iv)$.

Further, if $R=2^{-1/2}$ and $|\hat{f}(z)|\le C$, $z\in\mathbb{D}_R$, then for every $d\in\mathbb{N}$ and any $x\in\mathcal{H}_{ch}^d$, by \eqref{symbols},
\begin{multline*}
\|\widehat{T_fx}\|_{H^2(\mathbb{D}_R^d)}
=\sup_{0<r<R}
\biggl(\frac{1}{(2\pi)^d}\int_{\mathbb{T}^d}|\hat{x}(rz_1,\dots,rz_d)\hat{f}(rz_d)|^2\,d\mu\biggr)^{1/2}\\
\le C\sup_{0<r<R}\biggl(\frac{1}{(2\pi)^d}\int_{\mathbb{T}^d}|\hat{x}(rz_1,\dots,rz_d)|^2\,d\mu\biggr)^{1/2}
=C\|\hat{x}\|_{H^2(\mathbb{D}_R^d)},
\end{multline*}
where $d\mu$ is the usual Lebesgue measure on the torus $\mathbb{T}^d=\{(z_1,\dots,z_d)\in\mathbb{C}:|z_1|=\ldots=|z_d|=1\}$. Combining this together with the equality $A_2^+(\mathbb{D}_R^d)=H^2(\mathbb{D}_R^d)$, we get
$$
\|\widehat{T_fx}\|_{A_2^+(\mathbb{D}_R^d)}\le C\|\hat{x}\|_{A_2^+(\mathbb{D}_R^d)}.
$$
Therefore, since
$$
\|x\|_{L^2}=\biggl(\sum_{k_1,\dots,k_d\ge0}\frac{|\xi_{k_1,\dots,k_d}|^2}{2^{k_1+\ldots+k_d+d-1}}\biggr)^{1/2}
=2^{(d-1)/2}\|\hat{x}\|_{H^2(\mathbb{D}_R^d)},
$$
it follows
$$
\|T_fx\|_{L^2}\le C\|x\|_{L^2}, \qquad x\in\mathcal{H}_{ch}^d\cap L^2, \qquad d\in\mathbb{N}.
$$
As a result, we see that the operator $T_f$ is bounded on every subspace $\mathcal{H}_{ch}^d\cap L^2$ (uniformly in $d\in\mathbb{N}$).
Thus, the implication $(iv)\Rightarrow(ii)$ is proved.

Let us establish the implication $(ii)\Rightarrow(i)$. To this end, we will use the orthogonal expansion of any function $x\in L^2$, $\int_0^1x(t)\,dt=0$ in Haar chaoses:
$$
x=\sum_{d=1}^{\infty}x_d, \qquad x_d\in\mathcal{H}_{ch}^d.
$$
Observing that $T_fx_d\in\mathcal{H}_{ch}^d$, we see that
$$
T_fx=\sum_{d=1}^{\infty}T_fx_d
$$
is the orthogonal expansion in Haar chaoses as well. Hence,
$$
\|T_fx\|_{L^2}^2=\sum_{d=1}^{\infty}\|T_fx_d\|_{L^2}^2\le C^2\sum_{d=1}^{\infty}\|x_d\|_{L^2}^2=C^2\|x\|_{L^2}.
$$

Since the implications $(i)\Rightarrow(ii)\Rightarrow(iii)$ are obvious, the proof is completed.
\end{proof}

\begin{remark}\rm
\label{rem11}
An inspection of the proof of Proposition \ref{l.L2} shows that
$$
\|T_f\|_{L^2}=\|\hat{f}\|_{H^{\infty}(\mathbb{D}_{2^{-1/2}})}.
$$
\end{remark}

\begin{corollary}\label{c.L21}
Let $f\in\mathcal{H}_{ch}^1\cap L^2$. The following conditions are equivalent:

(i) the operator $T_f$ is an isomorphism of the space $L^2$;

(ii) $T_f$ is an isomorphism of each subspace $\mathcal{H}_{ch}^d\cap L^2$, $d\in\mathbb{N}$;

(iii) $T_f$ is an isomorphism of the subspace $\mathcal{H}_{ch}^1\cap L^2$;

(iv) the symbol $\hat{f}(z)$ is invertible in the algebra $H^{\infty}(\mathbb{D}_{2^{-1/2}})$.
\end{corollary}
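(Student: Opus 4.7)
The plan is to derive the corollary from Proposition~\ref{l.L2} by combining the symbol calculus \eqref{symbols} with the classical description of the commutant of the shift operator on $H^2$. The core observation is that for operators commuting with the Haar multishift, composition corresponds to multiplication of symbols, so inverting $T_f$ amounts to inverting $\hat f$ in the appropriate multiplier algebra, which for $p=2$ reduces to $H^\infty(\mathbb{D}_{2^{-1/2}})$.

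For $(iv)\Rightarrow(i)$, I would set $\hat g(z):=1/\hat f(z)\in H^{\infty}(\mathbb{D}_{2^{-1/2}})$ and let $g$ be the first-order Haar chaos having $\hat g$ as its symbol. Since $H^{\infty}(\mathbb{D}_{2^{-1/2}})\subset A_{2}^{+}(\mathbb{D}_{2^{-1/2}})=H^{2}(\mathbb{D}_{2^{-1/2}})$, Lemma~\ref{l.1} yields $g\in\mathcal{H}_{ch}^1\cap L^{2}$, and Proposition~\ref{l.L2} then produces a bounded operator $T_g$ on $L^{2}$. For any Haar polynomial $x\in\mathcal{H}_{ch}^{d}$, applying \eqref{symbols} twice gives
$$\widehat{T_{g}T_{f}x}(z_{1},\dots,z_{d})=\hat x(z_{1},\dots,z_{d})\hat f(z_{d})\hat g(z_{d})=\hat x(z_{1},\dots,z_{d}),$$
and symmetrically for $T_{f}T_{g}x$. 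Using the orthogonal chaos decomposition of any mean-zero $x\in L^2$ together with the continuity of $T_{f},T_{g}$, this identity extends to the whole mean-zero subspace of $L^{2}$, so $T_g=T_f^{-1}$ and $T_f$ is an isomorphism.

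The implications $(i)\Rightarrow(ii)\Rightarrow(iii)$ I would handle jointly. By \eqref{symbols} the operator $T_{f}$ preserves each subspace $\mathcal{H}_{ch}^{d}\cap L^{2}$, so the restriction is bounded. Given $y\in\mathcal{H}_{ch}^{d}\cap L^{2}$, surjectivity of $T_f$ on $L^{2}$ yields an $x\in L^{2}$ with $T_{f}x=y$; decomposing $x=\sum_{e}x_{e}$ in chaoses, each $T_{f}x_{e}$ lies in $\mathcal{H}_{ch}^{e}$, so uniqueness of the chaos expansion combined with injectivity of $T_f$ forces $x_{e}=0$ for $e\neq d$, establishing surjectivity of the restriction. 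The open mapping theorem upgrades this to an isomorphism, and specialising $d=1$ gives $(iii)$.

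The principal step is $(iii)\Rightarrow(iv)$. Via the Banach space isomorphism $f\mapsto\hat f$ between $\mathcal{H}_{ch}^{1}\cap L^{2}$ and $A_{2}^{+}(\mathbb{D}_{2^{-1/2}})=H^{2}(\mathbb{D}_{2^{-1/2}})$ supplied by Lemma~\ref{l.1}, together with the case $d=1$ of \eqref{symbols}, condition $(iii)$ translates into saying that the multiplication operator $M_{\hat f}$ is an isomorphism of $H^{2}(\mathbb{D}_{2^{-1/2}})$. Since $M_{\hat f}^{-1}$ commutes with $M_z$, the classical identification of the commutant of the shift on $H^{2}$ forces $M_{\hat f}^{-1}=M_v$ for some $v\in H^{\infty}(\mathbb{D}_{2^{-1/2}})$; evaluating on the constant function $1$ gives $v\hat f=1$, so $1/\hat f\in H^{\infty}(\mathbb{D}_{2^{-1/2}})$. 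The main delicacy is precisely this last appeal to the shift-commutant theorem, but it is routine once the identification $A_{2}^{+}(\mathbb{D}_{2^{-1/2}})=H^{2}(\mathbb{D}_{2^{-1/2}})$ has been made explicit.
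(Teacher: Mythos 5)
Your proof is correct and is essentially the paper's argument: everything is reduced to Proposition~\ref{l.L2} by observing that, at the level of symbols, inverting $T_f$ amounts to multiplication by $1/\hat f$, so that $T_f$ is an isomorphism precisely when both $T_f$ and the operator with symbol $1/\hat f$ are bounded. The only cosmetic difference is that the paper obtains $(i)\Rightarrow(iv)$ by identifying $T_f^{-1}=T_g$ through the Haar-multishift commutant and the chaos decomposition of $g$, whereas you close the cycle via $(iii)\Rightarrow(iv)$ using Halmos's description of the commutant of the shift on $H^2(\mathbb{D}_{2^{-1/2}})$ — an equivalent device; just note explicitly that $\hat f\in H^{\infty}(\mathbb{D}_{2^{-1/2}})$ itself (needed for invertibility \emph{in the algebra}) follows from the equivalence $(iii)\Leftrightarrow(iv)$ of Proposition~\ref{l.L2}.
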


\begin{proof}
As was above-mentioned, the inverse operator $T_f^{-1}$ commutes with the Haar multishift, and hence it can be represented as $T_g$ with $g=T_f^{-1}h$. Observe that $g$ is the dual function to $f$. Indeed, if
$$
g=\sum_{d=1}^{\infty}g_d, \qquad g_d\in\mathcal{H}_{ch}^d,
$$
then we have
$$
h=T_fg=\sum_{d=1}^{\infty}T_fg_d, \qquad T_fg_d\in\mathcal{H}_{ch}^d.
$$
Hence, $T_fg_d=0$ for $d\ge2$. Combining this with the fact that $Ker(T_f)=\{0\}$ (it is a consequence of the existence of a biorthogonal dual system), we conclude that $g=g_1$, i.e., $g$ is a first-order Haar chaos. Then, rewriting the equality $h=T_fg$ in the equivalent form: $1=\hat{f}(z)\hat{g}(z)$, we infer that $g$ is the dual function to $f$.

As a result, $T_f$ is an isomorphism if and only if both operators $T_f$ and $T_g$, corresponding to mutually dual functions $f,g\in \mathcal{H}_{ch}^1\cap L^2$, are bounded. So, it remains to apply Proposition \ref{l.L2}.
\end{proof}

Recalling the definition of the operator $T_f$, we can restate Corollary \ref{c.L21} as follows.

\begin{corollary}\label{c.L22}
Let $f\in \mathcal{H}_{ch}^1\cap L^2$. The following conditions are equivalent:

(i) the system of dilations and translations of $f$ is a basis in $L^2$, equivalent to the Haar system;

(ii) for every $d\in\mathbb{N}$ the subsystem $\{f_n\}_{n\in N_d}$ is a basis in the subspace $\mathcal{H}_{ch}^d\cap L^2$, equivalent to the Haar subsystem $\{h_n\}_{n\in N_d}$;

(iii) the subsystem $\{f_{2^k}\}$ is a basis in the subspace $\mathcal{H}_{ch}^1\cap L^2$, equivalent to the Haar subsystem $\{h_{2^k}\}$;

(iv) the symbol $\hat{f}(z)$ satisfies the inequality
$$
0<A\le|\hat{f}(z)|\le B<\infty, \qquad z\in\mathbb{D}_{2^{-1/2}}.
$$
\end{corollary}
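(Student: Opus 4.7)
The plan is to recognize Corollary \ref{c.L22} as a direct translation of the operator-theoretic Corollary \ref{c.L21} into the language of bases, together with an elementary reformulation of condition (iv) as a pointwise two-sided bound.

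First I would set up the dictionary between operators and bases. By its very definition, $T_f$ satisfies $T_f h_n = f_n$ on the mean-zero subspace of $L^2$. Consequently, for any set $E\subset\mathbb{N}$ such that $T_f$ leaves the closed linear span $[h_n:n\in E]_{L^2}$ invariant, the sequence $\{f_n\}_{n\in E}$ is a basis of $[h_n:n\in E]_{L^2}$ equivalent to the Haar subsystem $\{h_n\}_{n\in E}$ if and only if $T_f$ extends to an isomorphism of $[h_n:n\in E]_{L^2}$ onto itself. This is the only piece of basis theory I need, and it is standard.

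Next I would apply this dictionary in three cases: $E=\mathbb{N}$ giving the mean-zero part of $L^2$; $E=N_d$ giving $\mathcal{H}_{ch}^d\cap L^2$ (since $\mathcal{H}_{ch}^d\cap L^2=[h_n:n\in N_d]_{L^2}$); and $E=\{2^k:k\ge 0\}$ giving $\mathcal{H}_{ch}^1\cap L^2$. The required $T_f$-invariance of each of these subspaces is already used in the proof of Proposition \ref{l.L2}; equivalently it follows from the observation in Section \ref{Intro} that for $n\in N_d$ one has $f_n=\sum_k c_k h_{2^k n}\in[h_m:m\in N_d]_{L^2}$. With this, conditions (i), (ii), (iii) of Corollary \ref{c.L22} become, respectively, conditions (i), (ii), (iii) of Corollary \ref{c.L21}.

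Finally I would match the two versions of condition (iv). Invertibility of $\hat f$ in the Banach algebra $H^\infty(\mathbb{D}_{2^{-1/2}})$ means that both $\hat f$ and $1/\hat f$ belong to $H^\infty(\mathbb{D}_{2^{-1/2}})$, which is clearly equivalent to the bound $0<A\le|\hat f(z)|\le B<\infty$ on $\mathbb{D}_{2^{-1/2}}$: the upper bound is exactly $\|\hat f\|_\infty\le B$, while the lower bound forces $\hat f$ to be zero-free and yields $\|1/\hat f\|_\infty\le 1/A$. Hence condition (iv) of Corollary \ref{c.L22} coincides with condition (iv) of Corollary \ref{c.L21}, and the chain (i)$\Leftrightarrow$(ii)$\Leftrightarrow$(iii)$\Leftrightarrow$(iv) follows immediately. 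I do not foresee any serious obstacle; the only point that requires a little care is the bookkeeping in the basis-to-operator translation, but no new analytic input beyond Corollary \ref{c.L21} is needed.
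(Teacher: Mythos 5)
Your proposal is correct and is essentially the paper's own argument: the paper simply states that Corollary \ref{c.L22} is Corollary \ref{c.L21} restated via the definition of $T_f$, and your dictionary (equivalence to the Haar subsystem on an invariant span $[h_n:n\in E]_{L^2}$ $\Leftrightarrow$ $T_f$ is an isomorphism of that span) together with the identification of invertibility in $H^\infty(\mathbb{D}_{2^{-1/2}})$ with the two-sided bound $0<A\le|\hat f(z)|\le B$ is exactly that restatement, just written out explicitly.
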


\begin{corollary}\label{c.L23}
Suppose $f\in \mathcal{H}_{ch}^1\cap L^2$ such that the operator $T_f:L^2\to L^2$ is bounded. Then, the spectrum of $T_f$ coincides with the closure of the image of the symbol $\hat{f}(z)$ on the disk $\mathbb{D}_{2^{-1/2}}$, that is,
$$
\sigma(T_f)_{L^2}=\overline{\hat{f}(\mathbb{D}_{2^{-1/2}})}.
$$
Moreover, $\rho(T_f)_{L^2}=\|T_f\|_{L^2}$, where $\rho(T_f)_{L^2}$ is the spectral radius of $T_f$.
\end{corollary}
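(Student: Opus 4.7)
The plan is to reduce everything to Corollary~\ref{c.L21} through the observation that the family $\{T_u : u \in \mathcal{H}_{ch}^1 \cap L^2,\, T_u \text{ bounded on } L^2\}$ is closed under linear combinations with the identity. Concretely, the identity operator on the mean zero subspace of $L^2$ equals $T_h$, since by definition $T_h V^\alpha h = V^\alpha h$ for every $\alpha \in \mathbb{A}$, and the symbol of $h = h_{2^0}$ is $\hat{h}(z) \equiv 1$. Because $T_f$ is linear in $f$, it follows that for any $\lambda \in \mathbb{C}$
$$
T_f - \lambda I = T_{f - \lambda h},
$$
where $f - \lambda h \in \mathcal{H}_{ch}^1 \cap L^2$ has symbol $\hat{f}(z) - \lambda$.

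Next I would invoke Corollary~\ref{c.L21}: the operator $T_f - \lambda I = T_{f-\lambda h}$ is an isomorphism of $L^2$ if and only if the symbol $\hat{f}(z) - \lambda$ is invertible in the algebra $H^\infty(\mathbb{D}_{2^{-1/2}})$. Since $\hat{f} \in H^\infty(\mathbb{D}_{2^{-1/2}})$ by Proposition~\ref{l.L2}, so is $\hat{f} - \lambda$, and invertibility in $H^\infty(\mathbb{D}_{2^{-1/2}})$ is in this case equivalent to being bounded away from zero: there exists $c > 0$ with $|\hat{f}(z) - \lambda| \geq c$ for all $z \in \mathbb{D}_{2^{-1/2}}$. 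The latter condition is precisely the statement that $\lambda \notin \overline{\hat{f}(\mathbb{D}_{2^{-1/2}})}$ (if $\lambda$ were in the closure, a sequence $\hat{f}(z_n) \to \lambda$ would contradict the lower bound; conversely, the distance from $\lambda$ to a closed set not containing it is positive). Taking complements yields the equality $\sigma(T_f)_{L^2} = \overline{\hat{f}(\mathbb{D}_{2^{-1/2}})}$.

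For the spectral radius, I would simply combine this description with Remark~\ref{rem11}. Since $\hat{f}$ is analytic, taking the closure in $\mathbb{C}$ of its image does not change the supremum of moduli, so
$$
\rho(T_f)_{L^2} = \sup_{\lambda \in \sigma(T_f)_{L^2}} |\lambda| = \sup_{z \in \mathbb{D}_{2^{-1/2}}} |\hat{f}(z)| = \|\hat{f}\|_{H^\infty(\mathbb{D}_{2^{-1/2}})} = \|T_f\|_{L^2}.
$$

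The only mildly delicate point is the equivalence between invertibility of $\hat{f} - \lambda$ in $H^\infty(\mathbb{D}_{2^{-1/2}})$ and being bounded below on the disk, but this is standard: one direction is trivial, and for the converse one notes that $1/(\hat{f}(z) - \lambda)$ is then analytic (no zeros in $\mathbb{D}_{2^{-1/2}}$) and uniformly bounded by $1/c$, hence lies in $H^\infty(\mathbb{D}_{2^{-1/2}})$. No substantive obstacle is expected; the statement is essentially a translation of Corollary~\ref{c.L21} into spectral language via the symbolic calculus $f \mapsto T_f$.
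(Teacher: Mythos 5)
Your proof is correct and, for the spectrum identity, follows essentially the same route as the paper: write $\lambda I-T_f=T_{\lambda h-f}$, apply Corollary~\ref{c.L21} to get invertibility in $H^{\infty}(\mathbb{D}_{2^{-1/2}})$ of $\lambda-\hat{f}$, and note that (given $\hat f\in H^\infty(\mathbb{D}_{2^{-1/2}})$ by Proposition~\ref{l.L2}) this is exactly the condition $\lambda\notin\overline{\hat{f}(\mathbb{D}_{2^{-1/2}})}$. The only genuine divergence is in the spectral radius: you read it off directly from the already-established spectrum together with Remark~\ref{rem11}, i.e. $\rho(T_f)_{L^2}=\sup_{z\in\mathbb{D}_{2^{-1/2}}}|\hat f(z)|=\|T_f\|_{L^2}$, whereas the paper avoids using the first assertion and instead applies the Gelfand formula, noting that $T_f^n=T_F$ with $\hat F=\hat f^{\,n}$, so $\|T_f^n\|_{L^2}^{1/n}=\|\hat f^{\,n}\|_{H^\infty}^{1/n}=\|\hat f\|_{H^\infty}=\|T_f\|_{L^2}$. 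Your version is marginally shorter since it recycles the first part; the paper's version shows the radius identity is independent of the spectrum computation and illustrates the multiplicativity of the symbolic calculus, but both rest on the same key fact $\|T_f\|_{L^2}=\|\hat f\|_{H^\infty(\mathbb{D}_{2^{-1/2}})}$ from Remark~\ref{rem11}, and no step of yours is gapped.
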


\begin{proof}
Observe that the operator $\lambda I-T_f$, $\lambda\in\mathbb{C}$, commutes with the Haar multishift and corresponds to the function $f_{\lambda}=\lambda h-f$. According to Corollary \ref{c.L21}, this operator is invertible if and only if the symbol $\hat{f}_{\lambda}(z)=\lambda-\hat{f}(z)$ is invertible in the algebra $H^{\infty}(\mathbb{D}_{2^{-1/2}})$, or equivalently,
$$
0<A\le|\lambda-\hat{f}(z)|\le B<\infty, \qquad z\in\mathbb{D}_{2^{-1/2}}.
$$
The lower estimate here just means that $\lambda\notin\overline{\hat{f}(\mathbb{D}_{2^{-1/2}})}$. In turn, the upper estimate is ensured by the boundedness of the symbol $\hat{f}(z)$ on the disk $\mathbb{D}_{2^{-1/2}}$, which is equivalent, by Proposition \ref{l.L2}, to the boundedness of $T_f$ in $L^2$.

To prove the second assertion of the corollary, it suffices to note that the operator $T_f^n$ commutes with the Haar multishift and hence has the form $T_F$, where $F=T_f^nh$. Passing to symbols, we get $\hat{F}(z)=(\hat{f}(z))^n$. Therefore, by Remark~\ref{rem11},
$$
\|T_f^n\|_{L^2}^{1/n}=\|\hat{f}^n\|_{H^{\infty}(\mathbb{D}_{2^{-1/2}})}^{1/n}=\|\hat{f}\|_{H^{\infty}(\mathbb{D}_{2^{-1/2}})}=\|T_f\|_{L^2},
$$
and the desired result follows.
\end{proof}


\section{Case $p>2$.}\label{bigger}

\begin{propos}\label{l.Lpg2}
Let $2<p<\infty$, and let $f\in \mathcal{H}_{ch}^1\cap L^p$. The following conditions are equivalent:

(i) the operator $T_f$ is bounded on the space $L^p$;

(ii) for every $d\in\mathbb{N}$ the operator $T_f$ is bounded on the subspace $\mathcal{H}_{ch}^d \cap L^p$;

(iii) $T_f$ is bounded on the subspace of the first-order Haar chaoses $\mathcal{H}_{ch}^1 \cap L^p$;

(iv) the symbol $\hat{f}(z)$ belongs to the algebra $M_p^+(\mathbb{D}_{2^{-1/p}})$.
\end{propos}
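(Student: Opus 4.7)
The strategy mirrors the four-step proof of Proposition \ref{l.L2}. Trivially $(i) \Rightarrow (ii) \Rightarrow (iii)$. For $(iii) \Leftrightarrow (iv)$, I would use Lemma \ref{l.1} to isomorphically identify $\mathcal{H}_{ch}^1 \cap L^p$ with $A_p^+(\mathbb{D}_{2^{-1/p}})$ via the symbol map $x \mapsto \hat{x}$. Combined with formula \eqref{symbols} for $d=1$, this converts boundedness of $T_f$ on $\mathcal{H}_{ch}^1 \cap L^p$ into boundedness of multiplication by $\hat{f}$ on $A_p^+(\mathbb{D}_{2^{-1/p}})$, which by definition is the condition $\hat{f} \in M_p^+(\mathbb{D}_{2^{-1/p}})$.

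For $(iv) \Rightarrow (ii)$, Lemma \ref{l.2} identifies $\mathcal{H}_{ch}^d \cap L^p$ with $A_p^+(\mathbb{D}_R^d)$ for $R = 2^{-1/p}$, and by \eqref{symbols} the operator $T_f$ corresponds to multiplication by $\hat{f}(z_d)$ in the last variable only. Expanding any $u \in A_p^+(\mathbb{D}_R^d)$ as $u = \sum_{k_1,\ldots,k_{d-1}} w_{k_1,\ldots,k_{d-1}}(z_d)\, z_1^{k_1}\cdots z_{d-1}^{k_{d-1}}$ with $w_{k_1,\ldots,k_{d-1}} \in A_p^+(\mathbb{D}_R)$, one directly computes
$$\|u\|_{A_p^+(\mathbb{D}_R^d)}^p = \sum_{k_1,\ldots,k_{d-1}} R^{p(k_1+\cdots+k_{d-1})} \|w_{k_1,\ldots,k_{d-1}}\|_{A_p^+(\mathbb{D}_R)}^p.$$
Since multiplication by $\hat{f}(z_d)$ acts coefficient-wise on the $w_{k_1,\ldots,k_{d-1}}$'s, its operator norm on $A_p^+(\mathbb{D}_R^d)$ is bounded by $\|\hat{f}\|_{M_p^+(\mathbb{D}_R)}$, uniformly in $d$.

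The main obstacle is $(ii) \Rightarrow (i)$: the orthogonality of distinct chaos orders exploited in the $L^2$ proof is unavailable for $p > 2$. My plan is to use the sharp-function characterization $\|x\|_{L^p} \simeq \|x^\sharp\|_{L^p}$ recalled in Section \ref{dyadic}. For a Haar polynomial $x = \sum_d x_d$ and a fixed multi-index $\alpha \ni t$ with $s = |\alpha|_1 + 1$, chaos orthogonality inside a single $L^2$-norm yields $\|2^{|\alpha|} V^{\alpha*} T_f x\|_{L^2}^2 = \sum_{d \ge s} \|2^{|\alpha|} V^{\alpha*} T_f x_d\|_{L^2}^2$. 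The crucial structural identity, verified by symbol algebra from Lemma \ref{l.2}, is the commutation $V^{\alpha*} T_f x_d = T_f V^{\alpha*} x_d$ for every $d > s$: the slice-and-shift operation effected by $V^{\alpha*}$ touches only the first $s$ symbol variables, while $T_f$ multiplies by $\hat{f}$ in the last variable $z_d$, so the two operations commute when $d > s$. Since one can check that $M_p^+(\mathbb{D}_{2^{-1/p}}) \subset H^\infty(\mathbb{D}_{2^{-1/p}}) \subset H^\infty(\mathbb{D}_{2^{-1/2}})$, Proposition \ref{l.L2} provides $L^2$-boundedness of $T_f$, so the $d > s$ contributions sum to at most $C_2\, x^\sharp(t)$.

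The diagonal term $d = s$ is the hardest step. Here $V^{\alpha*} T_f x_s$ and $T_f V^{\alpha*} x_s$ both lie in $\mathcal{H}_{ch}^1$, but their difference is a commutator whose symbol equals $z^{-l_s}(\hat{f}\cdot A_{<l_s})_{\ge l_s}$, where $A$ is the slice of $\hat{x}_s$ at the first $s-1$ variables and $l_s$ is the trailing-zero count of $\alpha$. This extra term must be estimated using $\hat{f} \in M_p^+(\mathbb{D}_{2^{-1/p}})$ together with a dyadic maximal-function bound on the Haar coefficients of $x$, so as to produce a pointwise estimate $(T_f x)^\sharp(t) \lesssim x^\sharp(t) + M x(t)$ for a maximal operator $M$ bounded in $L^p$. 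Integrating and using $\|x^\sharp\|_{L^p} \simeq \|x\|_{L^p}$ yields $\|T_f x\|_{L^p} \lesssim \|x\|_{L^p}$, and density of Haar polynomials completes the proof.
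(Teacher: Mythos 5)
Your treatment of the trivial implications, of $(iii)\Leftrightarrow(iv)$ and $(iv)\Rightarrow(ii)$ via symbols, and the first half of your $(ii)\Rightarrow(i)$ argument (sharp functions, the vanishing of the commutator $[V^{\alpha*},T_f]$ on chaoses of order $d>s$, and the $L^2$-control of those terms via $M_p^+(\mathbb{D}_{2^{-1/p}})\subset H^{\infty}(\mathbb{D}_{2^{-1/2}})$ and Proposition~\ref{l.L2}) all agree with the paper. The genuine gap is exactly the step you call the hardest: the diagonal term $d=s$. You propose to absorb it into a pointwise bound $(T_fx)^{\sharp}(t)\lesssim x^{\sharp}(t)+Mx(t)$, with $M$ controlled by a dyadic maximal-function bound on the Haar coefficients. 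No such estimate can hold under hypothesis (iv). Test it on $x=h$: then $h^{\sharp}\equiv1$ and $M_dh\le1$, so the bound would force $f^{\sharp}\in L^{\infty}$, i.e. $f\in BMO_d$, which by Lemma~\ref{l.1} means the coefficients $c_k$ are bounded. But (iv) allows, e.g., $\hat{f}(z)=\sum_{k\ge1}2^{k/p}k^{-2}z^k$, which lies in $A_1^+(\mathbb{D}_{2^{-1/p}})\subset M_p^+(\mathbb{D}_{2^{-1/p}})$ (and $f\in L^p$ by Lemma~\ref{l.1}) while $c_k=2^{k/p}k^{-2}$ is unbounded. More generally, any pointwise estimate of the proposed form is $p$-independent, whereas the conclusion must depend on $p$ (the radius $2^{-1/p}$ enters through (iv)); and indeed a crude estimate of the commutator symbol you wrote down, using only $|\xi_j|\le M_dx(t)$ and $|c_j|\lesssim2^{j/p}$, produces a factor growing like $2^{k/p}$ in the trailing-zero length $k$, which cannot be removed by pointwise means. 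If instead you intend $M$ to be some $f$-dependent operator, then its $L^p$-boundedness is precisely what has to be proved, so the argument is circular as it stands.

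The paper avoids this by never leaving the chaos decomposition inside the pointwise step: from the commutator identity one gets, for each $\alpha$ with $I_{\alpha}\ni t$ and the corresponding $d$, the relation $2^{|\alpha|}V^{\alpha*}T_fx=2^{|\alpha|}V^{\alpha*}T_fx_d-T_f2^{|\alpha|}V^{\alpha*}x_d+T_f2^{|\alpha|}V^{\alpha*}x$, whence $(T_fx)^{\sharp}(t)\le\sup_d(T_fx_d)^{\sharp}(t)+C\sup_dx_d^{\sharp}(t)+Cx^{\sharp}(t)$. The two suprema are then estimated in $L^p$-norm, not pointwise, by dominating $\sup_d$ by the $\ell^p$-sum and using $\bigl(\sum_d\|x_d\|_{L^p}^p\bigr)^{1/p}\lesssim\|x\|_{L^p}$ (this is where $p\ge2$ and the unconditionality of the Haar system, via the chaos Paley function, enter), together with the \emph{uniform in $d$} boundedness of $T_f$ on $\mathcal{H}_{ch}^d\cap L^p$, which is the quantitative form of (iv) established in the proof of $(iv)\Rightarrow(ii)$; see \eqref{crucial}. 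That uniform boundedness is what carries the $p$-dependence, and it never appears in your diagonal-term plan. To repair your proof, replace the pointwise maximal-function bound for the $d=s$ term by this vector-valued $\ell^p$ argument; the rest of your outline then matches the paper's proof.
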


\begin{proof}
Let $f=\sum_{k=0}^\infty c_kh_{2^k}$. First, applying \eqref{symbols}
for $d=1$, $R=2^{-1/p}$ together with Lemma \ref{l.1}, we get that the estimate
$$
\|\widehat{T_fx}\|_{A_p^+(\mathbb{D}_R)}\le \|\hat{f}\|_{M_p^+(\mathbb{D}_R)}\|\hat{x}\|_{A_p^+(\mathbb{D}_R)}, \qquad x\in\mathcal{H}_{ch}^1 \cap L^p,
$$
is equivalent to the inequality
$$
\|T_fx\|_{L^p}\le C_p\|\hat{f}\|_{M_p^+(\mathbb{D}_R)}\|x\|_{L^p}, \qquad x\in\mathcal{H}_{ch}^1 \cap L^p.
$$
This observation completes the proof of the equivalence $(iii)\Leftrightarrow(iv)$.

Further, by definition, the symbol $\hat{f}$ belongs to the algebra $M_p^+(\mathbb{D}_R)$ if and only if for any sequence $(\xi_k)$ such that $(2^{-k/p}\xi_k)\in\ell^p$ we have
\begin{equation}\label{eq.convol}
\biggl(\sum_{k=0}^{\infty}2^{-k}\biggl|\sum_{j=0}^k\xi_{k-j}c_j\biggr|^p\biggr)^{1/p}\le
C\biggl(\sum_{k=0}^{\infty}2^{-k}|\xi_k|^p\biggr)^{1/p}.
\end{equation}
Let $d\in\mathbb{N}$ and $x\in\mathcal{H}_{ch}^d \cap L^p$. Substituting into inequality \eqref{eq.convol} for $(\xi_k)$ the coefficients $(\xi_{k_1,\dots,k_d})$ of the chaos $x$ with fixed indices $k_1,\dots,k_{d-1}$, raising both sides to the $p$th power and then summing over all $k_1,\dots,k_{d-1}$, we get
$$
\|\hat{x}(z_1,\dots,z_d)\hat{f}(z_d)\|_{A_p^{+}(\mathbb{D}_R^d)}\le C\|\hat{x}(z_1,\dots,z_d)\|_{A_p^{+}(\mathbb{D}_R^d)}.
$$
Therefore, from Lemma \ref{l.2} it follows that
$$
\|T_fx\|_{L^p}\le C_pC \|x\|_{L^p}, \qquad x\in\mathcal{H}_{ch}^d \cap L^p, \qquad d=1,2,\dots.
$$
Thus, the implication $(iv)\Rightarrow(ii)$ is proved. Moreover, as a sub-product, we obtain that condition $(iv)$ (or equivalently $(iii)$) ensures that the operator norms of $T_f$ on the subspaces $\mathcal{H}_{ch}^d \cap L^p$, $d\in\mathbb{N}$, are uniformly bounded.

Since the implications $(i)\Rightarrow(ii)\Rightarrow(iii)$ are obvious, it is left to prove the implication $(ii)\Rightarrow(i)$.

Suppose that the operator $T_f$ is bounded on each of the subspaces $\mathcal{H}_{ch}^d\cap L^p$ (uniformly over all $d=1,2,\dots$).
We need to show that $T_f$ is bounded on the whole space $L^p$.
To this end, in view of the equivalence of the $L^p$-norms for $p>2$ of a function $y$ and its sharp-function $y^{\sharp}$ \cite[Theorem 1.2.5]{M}, it suffices to estimate the norm $\|(T_fx)^{\sharp}\|_{L^p}$ instead of $\|T_fx\|_{L^p}$.

Let $t\in I$ be fixed. Recalling that
\begin{equation}
\label{sharp formula}
y^{\sharp}(t)=\sup_{{\alpha\,:\,I_{\alpha}\ni t}}\|2^{|\alpha|}V^{\alpha*}y\|_{L^2}
\end{equation}
(see Section~\ref{dyadic}), we will estimate the function $2^{|\alpha|}V^{\alpha*}T_fx$ for an arbitrary mean zero function $x\in L^p$ and every multi-index $\alpha=(\alpha_1,\dots,\alpha_m)$ such that $I_{\alpha}\ni t$. Firstly, by \cite[Lemma 4]{AT2018}, we have
$$
2^{|\alpha|}V^{\alpha*}T_fx=T_f2^{|\alpha|}V^{\alpha*}x
+\sum_{j=0}^{m-1}\xi(\alpha_1,\dots,\alpha_j)2^{m-j}V_{\alpha_m}^*\ldots V_{\alpha_{j+1}}^*f,
$$
where $\xi(\alpha_1,\dots,\alpha_j):=2^j(x,V_{\alpha_1}\ldots V_{\alpha_j}h)$ is the Fourier-Haar coefficient of $x$, corresponding to the multi-index $(\alpha_1,\dots,\alpha_j)$.
Assuming that $t=(0_{k_1},1,\dots,0_{k_d},1,\dots)$ is the binary notation of $t$, we see that $I_{\alpha}\ni t$ if and only if $\alpha=(0_{k_1},1,\dots,1,0_{k_{d-1}},1,0_k)$ for some $d\in\mathbb{N}$ and $0\le k\le k_d$.
In addition, since $f$ is a first-order Haar chaos, then $V_{\alpha_m}^*\ldots V_{\alpha_{j+1}}^*f=0$ whenever $\alpha_{i}=1$ for some $i=j+1,\dots,m$. Therefore, for every $\alpha$ such that $I_{\alpha}\ni t$ we have
$$
2^{|\alpha|}V^{\alpha*}T_fx=T_f2^{|\alpha|}V^{\alpha*}x+\sum_{j=0}^{k-1}\xi(0_{k_1},1,\dots,1,0_{k_{d-1}},1,0_j)2^{k-j}V_0^{(k-j)*}f.
$$
Observe that $\xi(0_{k_1},1,\dots,1,0_{k_{d-1}},1,0_j)$ is simultaneously  the Fourier-Haar coefficient of the $d$-order Haar chaos $x_d$ from the expansion of $x$ in Haar chaoses
\begin{equation}
\label{expansion in chaoses}
x=\sum_{d=1}^{\infty}x_d, \qquad x_d\in \mathcal{H}_{ch}^d \cap L^p.
\end{equation}
Hence, for $x_d$ and a multi-index $\alpha$, satisfying $I_{\alpha}\ni t$, it follows that
$$
2^{|\alpha|}V^{\alpha*}T_fx_d=T_f2^{|\alpha|}V^{\alpha*}x_d+\sum_{j=0}^{k-1}\xi(0_{k_1},1,\dots,1,0_{k_{d-1}},1,0_j)2^{k-j}V_0^{(k-j)*}f.
$$
Comparing the last equations, we conclude that for every $\alpha$, with $I_{\alpha}\ni t$, there exists $d\in\mathbb{N}$ such that
\begin{equation*}
\label{crucial1}
2^{|\alpha|}V^{\alpha*}T_fx=2^{|\alpha|}V^{\alpha*}T_fx_d-T_f2^{|\alpha|}V^{\alpha*}x_d+T_f2^{|\alpha|}V^{\alpha*}x.
\end{equation*}

Further, in view of already proved implication $(ii)\Rightarrow(iv)$, $\hat{f}(z)$ is an analytic function in the disk $\mathbb{D}_{2^{-1/p}}$. Hence, it is clearly bounded in the smaller disk $\mathbb{D}_{2^{-1/2}}$, and so, by Proposition \ref{l.L2}, the operator  $T_f$ is bounded on  $L^2$. Taking now in both sides of the latter equation first the $L^2$-norms and then the supremum over all $\alpha$ such that $I_{\alpha}\ni t$, by formula \eqref{sharp formula}, we get the following pointwise estimate:
$$
(T_fx)^{\sharp}(t)\le \sup_{d\in\mathbb{N}}(T_fx_d)^{\sharp}(t)+C\sup_{d\in\mathbb{N}}x_d^{\sharp}(t)+Cx^{\sharp}(t).
$$
Thus, in view of the above-mentioned equivalence of the $L^p$-norms for $p>2$ of a function $y$ and its sharp-function $y^{\sharp}$, to prove the boundedness of the operator $T_f$ on $L^p$ it suffices to obtain the estimates
\begin{equation}
\label{crucial}
\Bigl\|\sup_{d\in\mathbb{N}}x_d^{\sharp}\Bigr\|_{L^p}\lesssim\|x\|_{L^p}\;\;\mbox{and}\;\;\Bigl\|\sup_{d\in\mathbb{N}}(T_fx_d)^{\sharp}\Bigr\|_{L^p}\lesssim\|x\|_{L^p}.
\end{equation}


Let us introduce the Paley function $P_{ch}x$, corresponding to the expansion \eqref{expansion in chaoses}, defined by
$$
P_{ch}x(t):=\biggl(\sum_{d=1}^{\infty}|x_d(t)|^2\biggr)^{1/2}.
$$
Since the Haar system is unconditional in $L^p$, $1<p<\infty$ (see e.g. \cite[Theorem~2.c.5]{LT2}), then the series \eqref{expansion in chaoses} converges unconditionally for each $x\in L^p$. Therefore, $\|P_{ch}x\|_{L^p}\asymp \|x\|_{L^p}$ for all $x\in L^p$, which implies that
$$
\biggl(\sum_{d=1}^{\infty}\|x_d\|_{L^p}^p\biggr)^{1/p}
=\biggl\|\biggl(\sum_{d=1}|x_d(t)|^p\biggr)^{1/p}\biggr\|_{L^p}
\le\biggl\|\biggl(\sum_{d=1}|x_d(t)|^2\biggr)^{1/2}\biggr\|_{L^p}
\lesssim\|x\|_{L^p}.
$$
Combining this with the hypothesis of the uniform boundedness of $T_f$ on the subspaces $\mathcal{H}_{ch}^d\cap L^p$, $d=1,2,\dots$, we get
\begin{multline*}
\Bigl\|\sup_{d\in\mathbb{N}}(T_fx_d)^{\sharp}\Bigr\|_{L^p}\le
\biggl\|\biggl(\sum_{d=1}^{\infty}((T_fx_d)^{\sharp})^p\biggr)^{1/p}\biggr\|_{L^p}
=\biggl(\sum_{d=1}^{\infty}\|(T_fx_d)^{\sharp}\|_{L^p}^p\biggr)^{1/p}\\
\lesssim\biggl(\sum_{d=1}^{\infty}\|T_fx_d\|_{L^p}^p\biggr)^{1/p}
\lesssim\biggl(\sum_{d=1}^{\infty}\|x_d\|_{L^p}^p\biggr)^{1/p}\lesssim\|x\|_{L^p},
\end{multline*}
and the second inequality from \eqref{crucial} is proved. The proof of the first one is completely similar.

As was noticed above, from \eqref{crucial} it follows that the operator $T_f$ is bounded on $L^p$, and so the proof of the proposition is completed.


\end{proof}

\begin{remark}\rm
\label{rem:independ on p}
An easy inspection of the proof of Proposition \ref{l.Lpg2} shows that
conditions $(ii)$, $(iii)$ and $(iv)$ are equivalent for every $p\in(1,\infty)$.
\end{remark}

\begin{remark}\rm
\label{rem:oper norm}
From the proof of Proposition \ref{l.Lpg2} it follows that, for each $2<p<\infty$, there exists a constant $C_p$ such that the norm of the operator $T_f:L^p\to L^p$, where $f\in \mathcal{H}_{ch}^1\cap L^p$, satisfies  the  inequality
$$
\|\hat{f}\|_{M_p^+(\mathbb{D}_{2^{-1/p}})}\le\|T_f\|_{L^p}\le C_p\|\hat{f}\|_{M_p^+(\mathbb{D}_{2^{-1/p}})}.
$$
\end{remark}

\begin{corollary}\label{c.Lpg21}
Let $2<p<\infty$, and let $f\in \mathcal{H}_{ch}^1\cap L^p$. The following conditions are equivalent:

(i) the operator $T_f$ is an isomorphism of the space $L^p$;

(ii) $T_f$ is an isomorphism of every subspace $\mathcal{H}_{ch}^d\cap L^p$, $d\in\mathbb{N}$;

(iii) $T_f$ is an isomorphism of the subspace $\mathcal{H}_{ch}^1\cap L^p$;

(iv) the symbol $\hat{f}(z)$ is invertible in the algebra $M_p^+(\mathbb{D}_{2^{-1/p}})$.
\end{corollary}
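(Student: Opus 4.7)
The plan is to imitate the proof of Corollary \ref{c.L21} for the $L^2$ case, replacing Proposition \ref{l.L2} by Proposition \ref{l.Lpg2}. The key structural fact is still that if $T_f$ is invertible on $L^p$, then its inverse commutes with the Haar multishift $V_0,V_1$, so $T_f^{-1}=T_g$ where $g=T_f^{-1}h$, and the real issue is to verify that $g$ is again a first-order Haar chaos. Once this is done, the equation $h=T_fg$ translates via the symbol formula \eqref{symbols} into $1=\hat f(z)\hat g(z)$, so that $\hat g=1/\hat f$ is the symbol of the dual function to $f$, and Proposition \ref{l.Lpg2} immediately identifies invertibility of $T_f$ with the simultaneous membership of $\hat f$ and $1/\hat f$ in the algebra $M_p^+(\mathbb D_{2^{-1/p}})$.

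Concretely, for the implication $(i)\Rightarrow(iv)$, I assume $T_f$ is an isomorphism of $L^p$ and decompose $g=\sum_{d\ge 1}g_d$ with $g_d\in\mathcal H_{ch}^d\cap L^p$ using the chaos decomposition; since the Haar basis is unconditional in $L^p$, this series converges unconditionally (the same fact used in the final stage of the proof of Proposition \ref{l.Lpg2} via the Paley function $P_{ch}$). Applying $T_f$ term by term and using that $T_f$ sends $\mathcal H_{ch}^d\cap L^p$ into itself, I get $h=\sum_{d\ge 1}T_fg_d$; uniqueness of the chaos decomposition of $h\in\mathcal H_{ch}^1$ then forces $T_fg_d=0$ for $d\ge 2$, and injectivity of $T_f$ yields $g_d=0$ for $d\ge 2$, so $g\in\mathcal H_{ch}^1\cap L^p$. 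The conclusion $1/\hat f=\hat g\in M_p^+(\mathbb D_{2^{-1/p}})$ then follows from boundedness of $T_g=T_f^{-1}$ on $L^p$ via Proposition \ref{l.Lpg2}. The converse $(iv)\Rightarrow(i)$ is routine: letting $g$ be the function with symbol $1/\hat f$, both $T_f$ and $T_g$ are bounded on $L^p$ by Proposition \ref{l.Lpg2}, and formula \eqref{symbols} shows that $T_gT_f=T_fT_g=I$ on each subspace $\mathcal H_{ch}^d\cap L^p$; the chaos decomposition then extends these identities to the whole mean-zero part of $L^p$, so $T_f$ is an isomorphism (and the implication $(iv)\Rightarrow(ii)$ drops out simultaneously). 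Finally $(ii)\Rightarrow(iii)$ is trivial, and for $(iii)\Rightarrow(iv)$ I transfer $T_f|_{\mathcal H_{ch}^1\cap L^p}$ via the symbol isomorphism of Lemma \ref{l.1} to the multiplication operator $M_{\hat f}$ on $A_p^+(\mathbb D_{2^{-1/p}})$; invertibility of $M_{\hat f}$ applied to the constant $1$ produces some $\hat g\in A_p^+(\mathbb D_{2^{-1/p}})$ with $\hat f\hat g=1$, and boundedness of $M_{\hat f}^{-1}=M_{\hat g}$ gives $1/\hat f\in M_p^+(\mathbb D_{2^{-1/p}})$.

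The main obstacle is the step that establishes $g=T_f^{-1}h\in\mathcal H_{ch}^1$. In the Hilbertian proof of Corollary \ref{c.L21} this was immediate from orthogonality of the chaos subspaces, but for $p>2$ orthogonality is unavailable and one must instead use that the chaos expansion is an unconditional direct-sum decomposition of the mean-zero subspace of $L^p$, together with injectivity of $T_f$, to separate out the chaos components. Once this structural point is in hand, all four equivalences reduce in a routine way to Proposition \ref{l.Lpg2} applied to $f$ and its dual $g$.
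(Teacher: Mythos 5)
Your proposal is correct and follows essentially the same route as the paper: the paper proves this corollary by repeating the argument of Corollary \ref{c.L21} with Proposition \ref{l.Lpg2} in place of Proposition \ref{l.L2}, i.e.\ writing $T_f^{-1}=T_g$ via the commutant property, showing $g=T_f^{-1}h$ is a first-order Haar chaos dual to $f$, and then reducing everything to boundedness of $T_f$ and $T_g$. Your only addition is to spell out the one point the paper leaves implicit for $p>2$ --- replacing the $L^2$ orthogonality of the chaos subspaces by uniqueness and unconditional convergence of the chaos decomposition in $L^p$ --- which is exactly the adjustment the paper's ``in the same way'' presupposes.
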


\begin{proof}
It follows from Proposition \ref{l.Lpg2} in the same way as Corollary \ref{c.L21} follows from Proposition \ref{l.L2}.
\end{proof}

Taking into account the definition of the operator $T_f$, we can restate Corollary \ref{c.Lpg21} as follows.

\begin{corollary}\label{c.Lpg22}
Suppose that $2<p<\infty$ and $f\in \mathcal{H}_{ch}^1\cap L^p$. The following conditions are equivalent:

(i) the system of dilations and translations of $f$ is a basis in $L^p$ that is equivalent to the Haar system;

(ii) for each $d\in\mathbb{N}$ the subsystem $\{f_n\}_{n\in N_d}$ is a basis in the subspace $\mathcal{H}_{ch}^d\cap L^p$, equivalent to the Haar subsystem $\{h_n\}_{n\in N_d}$;

(iii) the subsystem $\{f_{2^k}\}_{k=0}^\infty$ is  a basis in the subspace $\mathcal{H}_{ch}^1\cap L^p$, equivalent to the Haar subsystem
$\{h_{2^k}\}_{k=0}^\infty$;

(iv) the symbol $\hat{f}(z)$ is invertible in the algebra $M_p^+(\mathbb{D}_{2^{-1/p}})$.
\end{corollary}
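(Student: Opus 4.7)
My plan is to observe that Corollary \ref{c.Lpg22} is a pure restatement of Corollary \ref{c.Lpg21} under the dictionary between the operator $T_f$ and the system of dilations and translations, so only a translation of language is required.

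Concretely, I would begin by recalling the defining relation $T_fV^{\alpha}h=V^{\alpha}f$ for all $\alpha\in\mathbb{A}$, which, under the natural numbering $n=2^{|\alpha|}+\sum_{\nu}\alpha_{\nu}2^{|\alpha|-\nu}$, reads $T_fh_n=f_n$. Since $\{h_n\}_{n=1}^{\infty}$ is an unconditional basis in the mean-zero subspace of $L^p$ (by the classical theorem of Paley quoted at the start of the paper), the system $\{f_n\}_{n=1}^{\infty}$ is a basis in $L^p$ equivalent to $\{h_n\}_{n=1}^{\infty}$ if and only if the formally defined map $h_n\mapsto f_n$ extends to a bounded invertible operator on the mean-zero subspace of $L^p$; that extension is precisely $T_f$. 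This gives the equivalence $(i)\Leftrightarrow$ [$T_f$ is an isomorphism of $L^p$], matching condition $(i)$ of Corollary \ref{c.Lpg21}.

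Next I would do the same thing one chaos-order at a time. As already noted in the paper, $f_n\in\mathcal{H}_{ch}^d\cap L^p$ for every $n\in N_d$, so $T_f$ leaves each subspace $\mathcal{H}_{ch}^d\cap L^p$ invariant. Moreover, since the Haar system is an unconditional basis in $L^p$, the subsystem $\{h_n\}_{n\in N_d}$ is a (Schauder) basis of its closed linear span, which by definition is $\mathcal{H}_{ch}^d\cap L^p$. Exactly as above, the equivalence of $\{f_n\}_{n\in N_d}$ to $\{h_n\}_{n\in N_d}$ in $\mathcal{H}_{ch}^d\cap L^p$ is nothing but the statement that $T_f|_{\mathcal{H}_{ch}^d\cap L^p}$ is an isomorphism of that subspace. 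This yields $(ii)\Leftrightarrow$ [$T_f$ is an isomorphism of each $\mathcal{H}_{ch}^d\cap L^p$] and, applied for $d=1$ alone, $(iii)\Leftrightarrow$ [$T_f$ is an isomorphism of $\mathcal{H}_{ch}^1\cap L^p$].

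Finally, condition $(iv)$ is literally the same in both corollaries. Therefore the chain of equivalences in Corollary \ref{c.Lpg21} transfers verbatim to the present $(i)$--$(iv)$, completing the proof. The only point that requires even a line of care is checking that "equivalence of bases" and "$T_f$ is an isomorphism" really coincide also on the subspace level, which amounts to noting that $\{h_n\}_{n\in N_d}$ spans $\mathcal{H}_{ch}^d\cap L^p$; I do not anticipate any genuine obstacle, since all the analytic content was already absorbed into Proposition \ref{l.Lpg2} and Corollary \ref{c.Lpg21}.
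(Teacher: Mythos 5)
Your proposal is correct and follows exactly the paper's route: the paper obtains Corollary~\ref{c.Lpg22} simply by restating Corollary~\ref{c.Lpg21} via the dictionary $T_fh_n=f_n$, which is precisely the translation you carry out (including the harmless subspace-level check that $\{h_n\}_{n\in N_d}$ spans $\mathcal{H}_{ch}^d\cap L^p$). No gaps; all the analytic work is indeed already contained in Proposition~\ref{l.Lpg2} and Corollary~\ref{c.Lpg21}.
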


\begin{corollary}\label{c.Lpg23}
Suppose that the conditions of the preceding corollary hold. Then, the spectrum of the operator $T_f:L^p\to L^p$, $p>2$, is equal to the spectrum of the symbol $\hat{f}(z)$ in the algebra of multipliers $M_p^+(\mathbb{D}_{2^{-1/p}})$, i.e.,
$$
\sigma(T_f)_{L^p}=\sigma(\hat{f})_{M_p^+(\mathbb{D}_{2^{-1/p}})}.
$$
\end{corollary}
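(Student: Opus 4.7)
The approach mirrors the proof of Corollary \ref{c.L23}, with the algebra $H^{\infty}(\mathbb{D}_{2^{-1/2}})$ replaced by $M_p^+(\mathbb{D}_{2^{-1/p}})$ and Corollary \ref{c.L21} replaced by Corollary \ref{c.Lpg21}. The idea is to reduce the question ``is $\lambda\in\sigma(T_f)_{L^p}$?'' to a purely algebraic question about invertibility in the multiplier algebra, via the linearity of the correspondence $f\mapsto T_f$.

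First, for every $\lambda\in\mathbb{C}$ I would rewrite the operator $\lambda I-T_f$ acting on the mean-zero subspace of $L^p$ as a single $T$-operator. Since $h=h_{2^0}\in\mathcal{H}_{ch}^1\cap L^\infty$ has symbol $\widehat{h}(z)\equiv 1$ and $T_h$ is the identity on the mean-zero subspace, linearity gives
$$
\lambda I-T_f=T_{\lambda h-f},\qquad \lambda h-f\in\mathcal{H}_{ch}^1\cap L^p,
$$
with symbol $\widehat{\lambda h-f}(z)=\lambda-\widehat{f}(z)$. Under the standing hypothesis, Corollary \ref{c.Lpg22} tells us that $\widehat{f}\in M_p^+(\mathbb{D}_{2^{-1/p}})$, and since constants lie in this algebra, so does $\lambda-\widehat{f}$ for every $\lambda\in\mathbb{C}$.

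Next I would apply Corollary \ref{c.Lpg21} to the first-order Haar chaos $\lambda h-f$: the operator $T_{\lambda h-f}$ is an isomorphism of $L^p$ if and only if $\lambda-\widehat{f}(z)$ is invertible in $M_p^+(\mathbb{D}_{2^{-1/p}})$. Rephrasing both sides in spectral terms, $\lambda\notin\sigma(T_f)_{L^p}$ iff $\lambda\notin\sigma(\widehat{f})_{M_p^+(\mathbb{D}_{2^{-1/p}})}$, and taking complements in $\mathbb{C}$ yields the claimed identity of spectra.

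There is no substantive new difficulty here; the main work has been done upstream in Proposition \ref{l.Lpg2} and Corollary \ref{c.Lpg21}, where the sharp-function/Paley-in-chaoses argument translated operator-theoretic boundedness on $L^p$ into algebraic multiplier membership. Once that dictionary is in hand the spectral identity is essentially formal, the only routine verification being that $\lambda h-f$ belongs to $\mathcal{H}_{ch}^1\cap L^p$ so that Corollary \ref{c.Lpg21} is legitimately applicable for every complex $\lambda$.
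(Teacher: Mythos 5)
Your argument is correct and is exactly the route the paper intends: just as in the proof of Corollary \ref{c.L23}, one writes $\lambda I-T_f=T_{\lambda h-f}$ with symbol $\lambda-\hat{f}(z)$ and invokes Corollary \ref{c.Lpg21} to identify non-invertibility of $\lambda I-T_f$ on $L^p$ with non-invertibility of $\lambda-\hat{f}$ in $M_p^+(\mathbb{D}_{2^{-1/p}})$. The paper leaves this as an immediate consequence, and your write-up supplies precisely the same formal reduction, including the routine check that $\lambda h-f\in\mathcal{H}_{ch}^1\cap L^p$.
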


\begin{remark}\rm
From the embedding $M_p^+(\mathbb{D}_R)\subset H^{\infty}(\mathbb{D}_R)$ it follows that
$$
\overline{\hat{f}(\mathbb{D}_{2^{-1/p}})}\subset \sigma(T_f)_{L^p}.
$$
In contrast to the case $p=2$, the latter inclusion, in general, is strict. Say, the function
$$
\hat{f}(z)=\exp\Bigl(\frac{1+2^{1/p}z}{1-2^{1/p}z}\Bigr), \qquad z\in\mathbb{D}_{2^{-1/p}},
$$
is bounded together with $1/\hat{f}(z)$, but does not belong to the space $M_p^+(\mathbb{D}_{2^{-1/p}})$ (cf. \cite{Verb}). By Proposition~\ref{l.Lpg2}, this implies that the operator $T_f$ (and hence its spectrum)  is unbounded in $L^p$.
Similarly, for the spectral radius of $T_f$ we have
$$
\|\hat{f}\|_{H^{\infty}(\mathbb{D}_{2^{-1/p}})}
\le\rho(T_f)_{L^p}=\rho(\hat{f})_{M_p^+(\mathbb{D}_{2^{-1/p}})}
\le\|\hat{f}\|_{A_1^+(\mathbb{D}_{2^{-1/p}})}.
$$

\end{remark}

\begin{corollary}
If the symbol $\hat{f}(z)$ of a first-order Haar chaos $f$ is an analytic function that does not vanish in the disk $\mathbb{D}_{2^{-1/p}}$ for some $p>2$, then the system of dilations and translations of $f$ is a basis in the space $L^q$ for each $1<q<p$, equivalent to the Haar system.
\end{corollary}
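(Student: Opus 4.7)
The plan is to reduce the equivalence to the Haar system in $L^q$ for each $q\in(1,p)$ to invertibility of the symbol $\hat{f}$ in the multiplier algebra $M_q^+(\mathbb{D}_{2^{-1/q}})$, and then to verify that invertibility via Cauchy estimates, exploiting the strong hypothesis that $\hat{f}$ is analytic (and non-vanishing) on the strictly larger disk $\mathbb{D}_{2^{-1/p}}$. For $q\in[2,p)$ the reduction is immediate from Corollaries \ref{c.L22} and \ref{c.Lpg22}; for $q\in(1,2)$ an additional bootstrapping step via Theorem \ref{t.2} is needed.

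\emph{Step 1 (analyticity gives geometric decay of coefficients).} Since $\hat{f}$ is analytic and does not vanish on $\mathbb{D}_{2^{-1/p}}$, both $\hat{f}$ and $1/\hat{f}$ are analytic there. Fix $q\in(1,p)$ and choose $\rho$ with $2^{-1/q}<\rho<2^{-1/p}$. Cauchy's estimates bound the Taylor coefficients $(c_k)$ of $\hat{f}$ by $|c_k|\leq M\rho^{-k}$, and similarly those $(d_k)$ of $1/\hat{f}$. Therefore $\sum_{k\geq 0}|c_k|(2^{-1/q})^k\leq M\sum_k(2^{-1/q}/\rho)^k<\infty$, showing $\hat{f}\in A_1^+(\mathbb{D}_{2^{-1/q}})$, and the same conclusion holds for $1/\hat{f}$.

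\emph{Step 2 (Young's inequality yields membership in $M_q^+$).} For any $q\in[1,\infty]$ and any $R>0$, the algebra $A_1^+(\mathbb{D}_R)$ embeds continuously into $M_q^+(\mathbb{D}_R)$: if $u=\sum a_k z^k\in A_1^+(\mathbb{D}_R)$ and $v=\sum b_k z^k\in A_q^+(\mathbb{D}_R)$, the coefficients $c_k$ of $uv$ satisfy $|c_k|R^k\leq\sum_{j=0}^k(|a_{k-j}|R^{k-j})(|b_j|R^j)$, and Young's convolution inequality gives $\|uv\|_{A_q^+(\mathbb{D}_R)}\leq\|u\|_{A_1^+(\mathbb{D}_R)}\|v\|_{A_q^+(\mathbb{D}_R)}$. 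Combining with Step 1, both $\hat{f}$ and $1/\hat{f}$ lie in $M_q^+(\mathbb{D}_{2^{-1/q}})$ for every $q\in(1,p)$, so $\hat{f}$ is invertible in this algebra.

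\emph{Step 3 (conclusion).} For $q\in[2,p)$, I invoke the implication (iv)$\Rightarrow$(i) of Corollary~\ref{c.Lpg22} (or of Corollary~\ref{c.L22} when $q=2$) to obtain that $\{f_n\}$ is a basis in $L^q$ equivalent to the Haar system. The main obstacle is the remaining range $q\in(1,2)$: the equivalence (iv)$\Leftrightarrow$(i) of Proposition~\ref{l.Lpg2} is unavailable there, because the key implication (ii)$\Rightarrow$(i) in its proof relied on the sharp-function characterization of $L^q$, valid only for $q>2$. To cover this range I would bootstrap from the already-settled $L^2$-case by appealing to Theorem~\ref{t.2}: since the equivalence holds in $L^2\in(1,2]$, Theorem~\ref{t.2} propagates it to every $q\in(1,2]$, which combined with the $q\in[2,p)$ range completes the proof.
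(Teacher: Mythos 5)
Your proof is correct, and its core coincides with the paper's: the paper's one-line claim that any function analytic in $\mathbb{D}_{2^{-1/p}}$ lies in $M_q^+(\mathbb{D}_{2^{-1/q}})$ for every $q<p$ is exactly your Steps 1--2 (Cauchy estimates plus the Young-inequality embedding $A_1^+(\mathbb{D}_R)\subset M_q^+(\mathbb{D}_R)$, applied to both $\hat f$ and $1/\hat f$), and the range $2\le q<p$ is then settled, as you do, by Corollaries~\ref{c.L22} and \ref{c.Lpg22}. The only genuine divergence is the passage to $1<q<2$: the paper invokes the external result \cite[Theorem~8]{AT2018}, whereas you bootstrap from the already established $L^2$-equivalence via Theorem~\ref{t.2}, i.e.\ via Corollary~\ref{c.Hd2} and the $H_d^1$-analysis of Proposition~\ref{l.Hd}. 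This substitution is legitimate and non-circular, since the proof of Theorem~\ref{t.2} does not use the corollary under discussion; what it buys is a more self-contained argument within the paper (and your explicit observation, consistent with Remark~\ref{rem:independ on p}, of why the implication $(iv)\Rightarrow(i)$ of Proposition~\ref{l.Lpg2} cannot simply be reused for $q<2$ is accurate), while the cost is a reliance on machinery developed later in the text rather than on a previously published theorem.
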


\begin{proof}
One can readily check that every function, which is analytic in the disk of radius $2^{-1/p}$, belongs to the algebra of multipliers
$M_q^+(\mathbb{D}_{2^{-1/q}})$ for each $q<p$. Consequently, by Corollary \ref{c.Lpg22}, the system of dilations and translations of $f$ is a basis in $L^q$, equivalent to the Haar system, whenever  $2\le q<p$. Finally, thanks to \cite[Theorem~8]{AT2018}, we can extend this result to the range $1<q<p$.
\end{proof}

\begin{corollary}
If the symbol $\hat{f}(z)$ of a first-order Haar chaos $f$ has the absolutely converging Taylor expansion and does not vanish in the closed disk $\overline{\mathbb{D}_{2^{-1/p}}}$ for some $p\ge2$, then the system of dilations and translations of $f$ is a basis in the space $L^p$, equivalent to the Haar system.
\end{corollary}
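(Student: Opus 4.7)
The plan is to reduce the claim to Theorem~\ref{t.1}: it suffices to verify its condition (iv), namely that both $\hat{f}$ and $1/\hat{f}$ lie in the multiplier algebra $M_p^+(\mathbb{D}_R)$ with $R:=2^{-1/p}$. I would do this by routing through the weighted Wiener-type algebra $A_1^+(\mathbb{D}_R)$ and combining two classical ingredients: Wiener's inversion theorem and Young's convolution inequality.

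First, I would interpret the hypothesis of absolute convergence of the Taylor expansion on $\overline{\mathbb{D}_R}$ as the statement $\sum_{k\ge 0}|c_k|R^k<\infty$, i.e., $\hat f\in A_1^+(\mathbb{D}_R)$. The change of variable $z\mapsto Rz$ identifies $A_1^+(\mathbb{D}_R)$ isometrically with the classical Wiener algebra $A_1^+(\mathbb{D}_1)$ on the closed unit disk; this is a unital commutative Banach algebra whose Gelfand (maximal ideal) space coincides with $\overline{\mathbb{D}}$, so its characters are exactly the point evaluations $u\mapsto u(z_0)$, $|z_0|\le 1$. Since $\hat f$ does not vanish on $\overline{\mathbb{D}_R}$, Wiener's inversion theorem applied in this Banach algebra yields $1/\hat f\in A_1^+(\mathbb{D}_R)$.

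Second, I would establish the continuous embedding $A_1^+(\mathbb{D}_R)\hookrightarrow M_p^+(\mathbb{D}_R)$. Given $u=\sum a_kz^k\in A_1^+(\mathbb{D}_R)$ and $v=\sum b_kz^k\in A_p^+(\mathbb{D}_R)$, the coefficient sequence of $uv$ is the convolution of $(a_k)$ with $(b_k)$; distributing the weight $R^k$ into the convolution and applying Young's inequality ($\ell^1\ast\ell^p\to\ell^p$) to the weighted sequences $(|a_k|R^k)$ and $(|b_k|R^k)$ gives
$$
\|uv\|_{A_p^+(\mathbb{D}_R)}\le \|u\|_{A_1^+(\mathbb{D}_R)}\,\|v\|_{A_p^+(\mathbb{D}_R)},
$$
so that $u\in M_p^+(\mathbb{D}_R)$ with norm controlled by $\|u\|_{A_1^+(\mathbb{D}_R)}$. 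Applied to both $u=\hat f$ and $u=1/\hat f$, this delivers exactly condition (iv) of Theorem~\ref{t.1}, and the desired conclusion follows.

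There is no substantial obstacle here: the only genuinely non-routine input is Wiener's inversion theorem, while the multiplier embedding is a one-line application of Young's inequality and the appeal to Theorem~\ref{t.1} is a direct quotation. The hypothesis $p\ge 2$ is used solely to ensure that Theorem~\ref{t.1} applies verbatim; for $1<p<2$ one would instead be in the regime of Theorem~\ref{t.3} and the equivalence to the Haar system in $L^p$ could fail under the same conditions, so the restriction $p\ge 2$ is essential for the present argument.
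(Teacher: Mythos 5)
Your argument is correct and follows essentially the same route as the paper: Wiener's inversion theorem gives $1/\hat f\in A_1^+(\mathbb{D}_{2^{-1/p}})$, and then the embedding of $A_1^+(\mathbb{D}_{2^{-1/p}})$ into the multiplier algebra feeds condition (iv) of Theorem~\ref{t.1}. The only cosmetic difference is that you verify $A_1^+\subset M_p^+$ uniformly via Young's inequality, whereas the paper quotes $A_1^+=M_1^+\subset M_p^+$ for $p>2$ and routes through $H^\infty(\mathbb{D}_{2^{-1/2}})=M_2^+$ for $p=2$; this changes nothing essential.
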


\begin{proof}
By the well-known Wiener theorem \cite[Section~1.7]{K}, the function $1/\hat{f}(z)$ together with the function $\hat{f}(z)$ has the absolutely converging Taylor expansion. Equivalently, $1/\hat{f}(z)$ belongs to the space $A_1^+(\mathbb{D}_{2^{-1/p}})=M_1^+(\mathbb{D}_{2^{-1/p}})\subset M_p^+(\mathbb{D}_{2^{-1/p}})$ (if $p>2$) and $A_1^+(\mathbb{D}_{2^{-1/2}})\subset H^{\infty}(\mathbb{D}_{2^{-1/2}})=M_2^+(\mathbb{D}_{2^{-1/2}})$) (if $p=2$). It remains to apply Corollary \ref{c.Lpg22} and Corollary \ref{c.L22}, respectively.
\end{proof}


Next, we show that an analogue of Proposition \ref{l.Lpg2} holds also in the  case $p=\infty$ provided that, for the limiting space, we take the space $BMO_d$.

\begin{propos}\label{l.BMOd}
Let $f\in \mathcal{H}_{ch}^1\cap BMO_d$. The  following conditions are equivalent:

(i) the operator $T_f$ is bounded on the space $BMO_d$;

(ii) for every $s\in\mathbb{N}$ the operator $T_f$ is bounded on the subspace of Haar chaoses $\mathcal{H}_{ch}^s \cap BMO_d$;

(iii) $T_f$ is bounded on the subspace of first-order Haar chaoses $\mathcal{H}_{ch}^1 \cap BMO_d$;

(iv) the symbol $\hat{f}(z)$ belongs to the algebra $A_1^+(\mathbb{D})$;

(v) $f$ belongs to the space $BV(I)$ of functions of bounded variation.
\end{propos}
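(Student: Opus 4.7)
The proof parallels those of Propositions~\ref{l.L2} and \ref{l.Lpg2}, with the sharp-function identity $\|y\|_{BMO_d}=\|y^{\sharp}\|_{L^{\infty}}$ replacing the equivalence $\|y^{\sharp}\|_{L^p}\simeq\|y\|_{L^p}$ for $p>2$. The implications $(i)\Rightarrow(ii)\Rightarrow(iii)$ are obvious by restriction. For $(iii)\Leftrightarrow(iv)$, formula \eqref{symbols} with $d=1$ together with Lemma~\ref{l.1} reduces the boundedness of $T_f$ on $\mathcal{H}_{ch}^1\cap BMO_d$ to the boundedness of multiplication by $\hat{f}$ on $A_{\infty}^+(\mathbb{D})$. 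A direct convolution computation then identifies the corresponding multiplier algebra: $M_{\infty}^+(\mathbb{D})=A_1^+(\mathbb{D})$. The inclusion $A_1^+\subset M_{\infty}^+$ is just H\"older for $\ell^1\ast\ell^{\infty}\subset\ell^{\infty}$, while the converse is obtained by testing against $v_N(z)=\sum_{k=0}^{N}\overline{\operatorname{sgn}(a_{N-k})}\,z^k$, whose product with $\hat{f}(z)=\sum_k a_kz^k$ has $N$-th Taylor coefficient equal to $\sum_{k=0}^{N}|a_k|$.

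For the equivalence $(iv)\Leftrightarrow(v)$, the direction $\hat{f}\in A_1^+(\mathbb{D})\Rightarrow f\in BV$ is immediate from the Haar expansion $f=\sum_k c_kh_{2^k}$ and the fact that each $h_{2^k}$ is of bounded variation. For the converse, the formulas relating $c_k$ and the values of $f$ give the recursion $c_{k+1}=2c_k+b_k$, where $b_k:=f(1/2^k)-f(1/2^{k+1})$, whose telescoping yields
$$
\frac{c_k}{2^k}=c_0+\sum_{j=0}^{k-1}\frac{b_j}{2^{j+1}}.
$$
Since $f\in L^2$ (as $BV\subset L^{\infty}$), one has $c_k/2^k\to 0$; substituting back gives $c_k=-\sum_{m=0}^{\infty}b_{k+m}/2^{m+1}$, and a routine Fubini-type summation produces $\sum_k|c_k|\le\sum_j|b_j|\lesssim\operatorname{Var}(f)$.

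The implication $(iv)\Rightarrow(ii)$ follows from \eqref{symbols} and Lemma~\ref{l.2}: treating $z_d$ as the active variable and applying the one-dimensional bound $\ell^1\ast\ell^{\infty}\subset\ell^{\infty}$ in that index, multiplication by $\hat{f}(z_d)$ is bounded on $A_{\infty}^+(\mathbb{D}^d)$ with norm at most $\|\hat{f}\|_{A_1^+(\mathbb{D})}$, uniformly in $d\in\mathbb{N}$.

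The main obstacle is the implication $(ii)\Rightarrow(i)$, which will be handled via the sharp-function argument of Proposition~\ref{l.Lpg2}. By the already-established equivalence $(ii)\Leftrightarrow(iv)$ and Proposition~\ref{l.L2}, $T_f$ is bounded on $L^2$; decomposing a mean-zero $x\in BMO_d$ into Haar chaoses $x=\sum_{d\ge1}x_d$ and invoking the commutation identity from \cite[Lemma~4]{AT2018}, we obtain, for each $t\in I$ and $\alpha\in\mathbb{A}$ with $I_{\alpha}\ni t$, some $d=d(\alpha)$ such that
$$
\|2^{|\alpha|}V^{\alpha*}T_fx\|_{L^2}\le(T_fx_d)^{\sharp}(t)+C\,x_d^{\sharp}(t)+C\,x^{\sharp}(t),
$$
with $C=\|T_f\|_{L^2}$. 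Taking the supremum over $\alpha$ with $I_\alpha\ni t$, then the $L^{\infty}$-norm, yields
$$
\|T_fx\|_{BMO_d}\lesssim\sup_{d\ge1}\|T_fx_d\|_{BMO_d}+\sup_{d\ge1}\|x_d\|_{BMO_d}+\|x\|_{BMO_d}.
$$
The uniform boundedness of $T_f$ on $\mathcal{H}_{ch}^d\cap BMO_d$ supplied by $(ii)$, together with the uniform boundedness of the chaos projections $x\mapsto x_d$ on $BMO_d$ (a consequence of the unconditionality of the Haar basis in $BMO_d$, seen through the duality $(H_d^1)^{*}=BMO_d$ and the unconditionality of the Haar basis in $H_d^1$), closes the argument.
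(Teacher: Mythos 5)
Your argument is correct, but it reaches the key implication by a genuinely different route than the paper. For $(iv)\Rightarrow(i)$ the paper works directly on the whole space $BMO_d$: it sets $T_0:=T_{h_2}$, proves via the commutator identity of \cite[Lemma~4]{AT2018} and a coefficient computation that $\|T_0\|_{BMO_d}=1$, and then sums the absolutely convergent operator series $T_f=\sum_k c_kT_0^k$, which in addition yields the sharp quantitative bound $\|T_f\|_{BMO_d}\le\|\hat f\|_{A_1^+(\mathbb{D})}$ (in fact equality, used later for the spectral remark). You instead first prove $(iv)\Rightarrow(ii)$ with bounds uniform in the chaos order via the polydisk symbols and $\ell^1\ast\ell^\infty\subset\ell^\infty$ (this does require, and correctly uses, the $d$-independence of the constants in Lemma~\ref{l.2} for $BMO_d$, which indeed holds), and then transplant the sharp-function argument of Proposition~\ref{l.Lpg2} to $p=\infty$ using $\|y\|_{BMO_d}=\|y^{\sharp}\|_{L^\infty}$, the $L^2$-boundedness of $T_f$ from Proposition~\ref{l.L2}, and the uniform boundedness of the chaos projections $x\mapsto x_d$ on $BMO_d$. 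This buys structural uniformity with the case $2<p<\infty$ and actually establishes $(ii)\Rightarrow(i)$ as a separate implication (the paper never needs it, since it closes the circle through $(iv)\Rightarrow(i)$), at the price of losing the exact norm identity. Two small remarks: your appeal to unconditionality of the Haar system in $H_d^1$ plus duality is correct but unnecessarily heavy, since $\|2^{|\alpha|}V^{\alpha*}x\|_{L^2}^2=\sum_{\beta}|\xi_{\alpha\beta}|^2 2^{-|\beta|}$ shows at once that each chaos projection is a contraction on $BMO_d$; and condition $(ii)$ as stated does not literally assert uniformity in the order, so in $(ii)\Rightarrow(i)$ you should say explicitly (as you implicitly do) that uniformity comes from the already proved chain $(ii)\Rightarrow(iii)\Rightarrow(iv)\Rightarrow$ uniform $(ii)$. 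Your treatment of $(iv)\Leftrightarrow(v)$ is a fleshed-out version of the paper's one-line observation, supplying the decay $c_k/2^k\to0$ needed for the nontrivial direction; this is fine.
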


\begin{proof}
Clearly, we have $(i)\Rightarrow(ii)\Rightarrow(iii)$. Since $\|x\|_{BMO_d}\simeq\|\hat{x}\|_{A_{\infty}^+(\mathbb{D})}$ for $x\in\mathcal{H}_{ch}^1$ (by Lemma~\ref{l.1}), the equivalence $(iii)\Leftrightarrow(iv)$ follows, as above, from the facts that $\widehat{T_fx}(z)=\hat{f}(z)\hat{x}(z)$ and  $M_{\infty}^+(\mathbb{D})=A_1^+(\mathbb{D})$.

$(iv)\Rightarrow(i)$. By Proposition \ref{l.L2}, the operator $T_f$ is bounded in $L^2$, and hence it is well-defined on $BMO_d$. So, we need only to prove its boundedness on the latter space.

For brevity, denote by $T_0$ the operator $T_{h_2}(=T_{V_0h})$. We claim that
\begin{equation}
\label{crucial2}
\|T_0\|_{BMO_d}=1.
\end{equation}

First, for arbitrary $x\in BMO_d$, $x=\sum_{\beta\in\mathbb{A}}\xi_\beta h_\beta$, and $\alpha=(\alpha_1,\dots,\alpha_k)\in\mathbb{A}$ the commutator
$$
[2^{|\alpha|}V^{\alpha*},T_0]:=2^{|\alpha|}(V^{\alpha*}T_0-T_0V^{\alpha*})
$$
can be represented by the formula
\begin{equation*}
\label{crucial3}
[2^{|\alpha|}V^{\alpha*},T_0]x=\sum_{j=0}^{k-1}\xi(\alpha_1,\dots,\alpha_j)2^{k-j}V_{\alpha_k}^*\dots V_{\alpha_{j+1}}^*V_0h
\end{equation*}
\cite[Lemma 4]{AT2018}. Since $2^{|\beta|}V^{\beta*}V_0h=h$ for $\beta=(0)$, and $2^{|\beta|}V^{\beta*}V_0h=0$ for $\beta=(1)$ and all multi-indices $\beta$ with $|\beta|>1$, we see that the right-hand side of the last equality contains, at most, one nonzero term, corresponding to the case when $j=k-1$ and $\alpha_k=0$. Hence, denoting $\alpha':=(\alpha_1,\dots,\alpha_{k-1})$, we get
$$
2^{|\alpha|}V^{\alpha*}T_0x=\xi_{\alpha'}h+T_02^{|\alpha|}V^{\alpha*}x.
$$
Observe that the function $h$ is orthogonal to the image of $T_0$. Hence,
$$
\|2^{|\alpha|}V^{\alpha*}T_0x\|_{L^2}^2=|\xi_{\alpha'}|^2+\|T_02^{|\alpha|}V^{\alpha*}x\|_{L^2}^2.
$$
Furthermore, $\sqrt{2}T_0$ is an isometry of $L^2$. Combining this with the equality
$$
\|2^{|\alpha|}V^{\alpha*}x\|_{L^2}^2=\sum_{\beta\in\mathbb{A}}\frac{|\xi_{\alpha\beta}|^2}{2^{|\beta|}},
$$
we find
$$
\|2^{|\alpha|}V^{\alpha*}T_0x\|_{L^2}^2=|\xi_{\alpha'}|^2+\frac12\sum_{\beta\in\mathbb{A}}\frac{|\xi_{\alpha\beta}|^2}{2^{|\beta|}}
=|\xi_{\alpha'}|^2+\sum_{\beta\in\mathbb{A}}\frac{|\xi(\alpha',0,\beta)|^2}{2^{|\beta|+1}}
\le\sum_{\gamma\in\mathbb{A}}\frac{|\xi_{\alpha'\gamma}|^2}{2^{|\gamma|}},
$$
because the last sum contains the term $|\xi_{\alpha'}|^2$ (for the "empty"\:index $\gamma$), all terms of the form $\frac{|\xi(\alpha',0,\beta)|^2}{2^{|\beta|+1}}$ (for $\gamma=(0,\beta)$), and also additional terms of the form $\frac{|\xi(\alpha',1,\beta)|^2}{2^{|\beta|+1}}$ (for $\gamma=(1,\beta)$).

Combining the last inequality and \eqref{eq.bmo}, in the case when $\alpha_k=0$, we have
$$
\|2^{|\alpha|}V^{\alpha*}T_0x\|_{L^2}^2\le\|2^{|\alpha'|}V^{\alpha'*}x\|_{L^2}^2\le\|x\|_{BMO_d}^2.
$$
Otherwise, if $\alpha_k=1$ or if $\alpha$ is the "empty"\:index, then the commutator vanishes, and hence
$$
\|2^{|\alpha|}V^{\alpha*}T_0x\|_{L^2}^2=\|T_02^{|\alpha|}V^{\alpha*}x\|_{L^2}^2=\frac12\|2^{|\alpha|}V^{\alpha*}x\|_{L^2}^2\le\frac12\|x\|_{BMO_d}^2.
$$
Thus, for all $x\in BMO_d$ we have
$$
\|T_0x\|_{BMO_d}=\sup_{\alpha\in\mathbb{A}}\|2^{|\alpha|}V^{\alpha*}T_0x\|_{L^2}\le\|x\|_{BMO_d}.
$$
Since $\|T_0h\|_{BMO_d}=\|h\|_{BMO_d}$, equality \eqref{crucial2} is proved.

Further, set $f=\sum_{k=0}^{\infty}c_kh_{2^k}$. Then, by the hypothesis of condition (iv), we have $\|\hat{f}\|_{A_1^+(\mathbb{D})}:=\sum_{k=0}^{\infty}|c_k|<\infty$. Therefore, from \eqref{crucial2} it follows that
$$
\sum_{k=0}^{\infty}|c_k|\|T_0^k\|_{BMO_d}<\infty.
$$
i.e., the operator series $\sum_{k=0}^{\infty}c_kT_0^k$ absolutely converges. On the other hand, by the definition of $f$, this series should converge to $T_f$. As a result, $T_f$ is bounded in $BMO_d$, and so the proof of implication $(iv)\Rightarrow(i)$ is completed.

Finally, the equivalence $(iv)\Leftrightarrow(v)$ is an immediate consequence of the easy observation that
$$
\sum_{k=0}^{\infty}|f(\tfrac{1}{2^{k}})-f(\tfrac{1}{2^{k+1}})|=\sum_{k=0}^{\infty}|2c_k-c_{k+1}|\simeq\sum_{k=0}^{\infty}|c_k|.
$$
\end{proof}

\begin{corollary}\label{c.BMOd1}
Let $f\in \mathcal{H}_{ch}^1\cap BMO_d$. The  following conditions are equivalent:

(i) the operator $T_f$ is an isomorphism of the space $BMO_d$;

(ii) for every $s\in\mathbb{N}$ the operator $T_f$ is an isomorphism of the subspace of Haar chaoses $\mathcal{H}_{ch}^s\cap BMO_d$;

(iii) $T_f$ is an isomorphism of the subspace of first-order Haar chaoses $\mathcal{H}_{ch}^1\cap BMO_d$;

(iv) the symbol $\hat{f}(z)$ is invertible in the algebra $A_1^+(\mathbb{D})$.
\end{corollary}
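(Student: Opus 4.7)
The plan is to follow the blueprint of Corollary~\ref{c.L21}, substituting Proposition~\ref{l.BMOd} for Proposition~\ref{l.L2}. The implications $(i)\Rightarrow(ii)\Rightarrow(iii)$ are immediate by restriction to invariant subspaces (each $\mathcal{H}_{ch}^s\cap BMO_d$ is preserved by $T_f$, and also by its inverse when $T_f$ is an isomorphism of $BMO_d$, because that inverse itself commutes with the Haar multishift and hence preserves the order of each Haar chaos). The substance of the proof thus lies in closing the loop via $(iii)\Rightarrow(iv)\Rightarrow(i)$.

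For $(iii)\Rightarrow(iv)$, I would exploit the symbol isomorphism $x\mapsto\hat x$ from $\mathcal{H}_{ch}^1\cap BMO_d$ onto $A_\infty^+(\mathbb{D})$ provided by Lemma~\ref{l.1}, which by formula \eqref{symbols} intertwines $T_f$ with the multiplication operator $M_{\hat f}$. Invertibility of $T_f$ on $\mathcal{H}_{ch}^1\cap BMO_d$ is then equivalent to invertibility of $M_{\hat f}$ on $A_\infty^+(\mathbb{D})$. Boundedness of $M_{\hat f}$ combined with $M_\infty^+(\mathbb{D})=A_1^+(\mathbb{D})$ (already used in the proof of Proposition~\ref{l.BMOd}) places $\hat f$ in $A_1^+(\mathbb{D})$. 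To deal with the inverse, I would set $\hat g:=M_{\hat f}^{-1}(1)\in A_\infty^+(\mathbb{D})$, so that $\hat f\hat g=1$; a direct coefficient-wise comparison, using the recurrence that determines the Taylor coefficients of $\hat g$ from those of $\hat f$, shows that $M_{\hat f}^{-1}$ acts as multiplication by $\hat g$ on every element of $A_\infty^+(\mathbb{D})$. Its boundedness then forces $\hat g\in M_\infty^+(\mathbb{D})=A_1^+(\mathbb{D})$, so that $\hat f$ is invertible in the Banach algebra $A_1^+(\mathbb{D})$.

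For $(iv)\Rightarrow(i)$, starting from $\hat f$ and $\hat g:=1/\hat f$ both in $A_1^+(\mathbb{D})$, I invoke Proposition~\ref{l.BMOd} to obtain boundedness of $T_f$ and $T_g$ on $BMO_d$. It remains to verify that these operators are mutually inverse. The cleanest route is to descend to $L^2$: since $A_1^+(\mathbb{D})\subset H^\infty(\mathbb{D}_{2^{-1/2}})$, Proposition~\ref{l.L2} ensures boundedness of $T_f$ and $T_g$ on $L^2$ as well. The symbol identity $\widehat{T_fT_g x}(z_1,\dots,z_d)=\hat f(z_d)\hat g(z_d)\,\hat x(z_1,\dots,z_d)=\hat x(z_1,\dots,z_d)$, valid via \eqref{symbols} on every subspace $\mathcal{H}_{ch}^d\cap L^2$, together with the orthogonal Haar-chaos decomposition of the mean-zero part of $L^2$, forces $T_fT_g=T_gT_f=I$ throughout that subspace. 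Since $BMO_d\subset L^2$ and both operators are bounded on $BMO_d$, these identities persist on $BMO_d$, so $T_f$ is an isomorphism there.

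The main subtle point will be in step $(iii)\Rightarrow(iv)$: passing from abstract invertibility of $M_{\hat f}$ on the non-separable space $A_\infty^+(\mathbb{D})$ to the concrete representation of the inverse as multiplication by $1/\hat f$. The coefficient-wise argument sketched above avoids any density issue; alternatively one may note that $M_{\hat f}^{-1}$ commutes with the shift $M_z$ (since $M_{\hat f}$ does and is injective), which pins down the action of $M_{\hat f}^{-1}$ on each monomial and lets one read off its values everywhere coefficient by coefficient.
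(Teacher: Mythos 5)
Your proposal is correct and follows essentially the same route as the paper: the downward implications are treated as immediate, $(iii)\Rightarrow(iv)$ is obtained from the symbol/multiplier identification behind Proposition~\ref{l.BMOd} (with the inverse recognized as multiplication by $1/\hat f$, i.e.\ as $T_g$ for the dual function $g$, exactly as in Corollary~\ref{c.L21}), and $(iv)\Rightarrow(i)$ is the paper's own ``easy $L^2$-argument'': $T_f$ and $T_g$ are bounded on $BMO_d$ by Proposition~\ref{l.BMOd}, they are mutually inverse on the mean-zero part of $L^2$, and $BMO_d\subset L^2$ transfers the identities $T_fT_g=T_gT_f=I$ to $BMO_d$.
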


\begin{proof}
The implications $(i)\Rightarrow(ii)\Rightarrow(iii)$ are obvious. From Proposition~\ref{l.BMOd} it follows that $(iii)$ implies $(iv)$.

$(iv)\Rightarrow(i)$. It suffices to use the following easy $L^2$-argument. By condition, the symbols $\hat{f}(z)$ and $\hat{g}(z)$, where $g$ is the dual function for $f$, belong to the algebra $A_1^+(\mathbb{D})$. Therefore, by Proposition~\ref{l.BMOd}, both operators $T_f$ and $T_g$ are bounded in $BMO_d$. Let $y\in BMO_d$ be arbitrary. Then $T_gy\in BMO_d$ and $T_fT_gy\in BMO_d$. But we know that $T_g$ is the inverse operator for $T_f$ in $L^2$. Thus, $T_fT_gy=y$, which implies that $T_f$ is an isomorphism of $BMO_d$.
\end{proof}

\begin{remark}\rm
The condition $\hat{f}\in A_1^+(\mathbb{D})$ ensures the convergence of the series $f=\sum_{k=0}^{\infty}c_kh_{2^k}$ in the $BMO_d$-norm, which is equivalent to the fact that $f$ belongs to the space $VMO_d=[h_n]_{BMO_d}$. Moreover, the operator $T_f$ acts boundedly in $VMO_d$.
\end{remark}

\begin{corollary}\label{c.BMOd2}
Let $f\in \mathcal{H}_{ch}^1 \cap VMO_d$. The  following conditions are equivalent:

(i) the system of dilations and translations of $f$ is a basis in the space $VMO_d$, equivalent to the Haar system;

(ii) for every $s\in\mathbb{N}$ the subsystem $\{f_n\}_{n\in N_s}$ is a basis in the subspace $\mathcal{H}_{ch}^s\cap VMO_d$, equivalent to the Haar subsystem $\{h_n\}_{n\in N_s}$;

(iii) the subsystem $\{f_{2^k}\}_{k=0}^\infty$ is a basis of the subspace $\mathcal{H}_{ch}^1 \cap VMO_d$, equivalent to the Haar subsystem
$\{h_{2^k}\}_{k=0}^\infty$;

(iv) the symbol $\hat{f}(z)$ has the absolutely converging Taylor expansion and does not vanish in the disk $\overline{\mathbb{D}}$;

(v) the function $f$ and its dual function $g$ belong to the space $BV(I)$.
\end{corollary}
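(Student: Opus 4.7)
The plan is to transfer Corollary \ref{c.BMOd1} from $BMO_d$ to its separable part $VMO_d$, and then identify the resulting invertibility condition via the Wiener theorem and Proposition \ref{l.BMOd}. As in Corollary \ref{c.BMOd1}, each of conditions (i)--(iii) is equivalent, by the definition of $T_f$, to the statement that $T_f$ is an isomorphism of the corresponding space. Since $\mathcal{H}_{ch}^s\cap VMO_d$ and $\mathcal{H}_{ch}^1\cap VMO_d$ are $T_f$-invariant, the implications (i)$\Rightarrow$(ii)$\Rightarrow$(iii) are immediate by restriction, so the real work is (iii)$\Leftrightarrow$(iv)$\Leftrightarrow$(v)$\Rightarrow$(i).

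For (iii)$\Rightarrow$(iv), I would use the isomorphism provided by Lemma~\ref{l.1} between $\mathcal{H}_{ch}^1\cap VMO_d$ and $[z^k]_{A_\infty^+(\mathbb{D})}$ (the subspace of $A_\infty^+(\mathbb{D})$ consisting of $u(z)=\sum a_kz^k$ with $a_k\to0$, as described in the remark following Lemma~\ref{l.4}). Under this identification, $T_f$ becomes multiplication by $\hat f$, and the hypothesis that $T_f$ is an isomorphism forces both $\hat f$ and $\hat g=1/\hat f$ to act as bounded multipliers preserving the separable subspace; combined with $M_\infty^+(\mathbb{D})=A_1^+(\mathbb{D})$, this means $\hat f$ is invertible in the Banach algebra $A_1^+(\mathbb{D})$. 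By Wiener's theorem \cite[Section~1.7]{K}, invertibility in $A_1^+(\mathbb{D})$ is equivalent to $\hat f\in A_1^+(\mathbb{D})$ together with $\hat f\neq 0$ on $\overline{\mathbb{D}}$, which is (iv).

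The converse (iv)$\Rightarrow$(i) then follows by combining Wiener's theorem with the remark preceding the corollary: condition (iv) gives $\hat g=1/\hat f\in A_1^+(\mathbb{D})$ as well, so by Proposition~\ref{l.BMOd} (and the remark) both $T_f$ and $T_g$ act boundedly on $VMO_d$. They are mutually inverse on the dense linear manifold of mean zero Haar polynomials (as is already known from the $L^2$-theory of Corollary~\ref{c.L21}), hence by continuity they extend to mutually inverse isomorphisms of $VMO_d$, proving (i). Finally, (iv)$\Leftrightarrow$(v) follows by applying the equivalence (iv)$\Leftrightarrow$(v) of Proposition~\ref{l.BMOd} to both $f$ and its dual $g$: each side says precisely that $\hat f$ and $\hat g$ both lie in $A_1^+(\mathbb{D})$, which by Wiener is the same as $\hat f$ being invertible in $A_1^+(\mathbb{D})$.

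The main obstacle I anticipate is the step (iii)$\Rightarrow$(iv), specifically verifying that $T_f$ being an isomorphism on the separable subspace $\mathcal{H}_{ch}^1\cap VMO_d$ really forces $1/\hat f$ (and not merely some formal inverse in a larger multiplier algebra) to sit in $A_1^+(\mathbb{D})$; this is where the distinction between $BMO_d$ and its separable part $VMO_d$ matters, since $T_f^{-1}$ must preserve the vanishing-at-infinity condition on Taylor coefficients. Once this is carefully justified, the rest of the argument is a clean assembly of Corollary~\ref{c.BMOd1}, Proposition~\ref{l.BMOd}, and Wiener's theorem.
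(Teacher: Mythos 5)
Your proposal is correct and takes essentially the route the paper intends: the corollary is stated there as a direct restatement of Corollary \ref{c.BMOd1} combined with Proposition \ref{l.BMOd}, the remark preceding the corollary, and Wiener's theorem, exactly as you assemble it. The one extra point you rightly flag — that an isomorphism of the separable subspace $\mathcal{H}_{ch}^1\cap VMO_d$ already forces $1/\hat{f}\in A_1^+(\mathbb{D})$ — is routine: the inverse commutes with the shift, hence is multiplication by a power series, and a convolution operator bounded on the coefficient space $\{(a_k):a_k\to 0\}$ has an $\ell^1$ symbol (test it on finitely supported sign sequences), so the multiplier algebra of the separable part coincides with $A_1^+(\mathbb{D})$.
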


\begin{remark}\rm
If the operator $T_f$ is bounded in $BMO_d$, then we have
$$
\rho(T_f)_{BMO_d}=\|T_f\|_{BMO_d}=\|\hat{f}\|_{A_1^+(\mathbb{D})}\quad\mbox{and}\quad \sigma(T_f)_{BMO_d}=\hat{f}(\overline{\mathbb{D}}).
$$
\end{remark}

\section{Case $p<2$}
\label{less}

Unlike the preceding sections, we start here with studying the limiting situation $p=1$, i.e., we examine first the behaviour of the operator $T_f$ in the space $H_d^1$.

\begin{propos}\label{l.Hd}
For every $f\in\mathcal{H}_{ch}^1$ the following conditions are equivalent:

(a) $f\in H_d^1$ and the operator $T_f$ is bounded in $H_d^1$;

(b) $f\in L^2$ and the operator $T_f$ is bounded in $L^2$.
\end{propos}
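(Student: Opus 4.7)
The plan is to split the equivalence into $(b) \Rightarrow (a)$, handled by a routine atomic-decomposition argument, and $(a) \Rightarrow (b)$, handled by dualising back into the first-order-chaos picture of Proposition \ref{l.L2}. The hard direction is the second, specifically the passage from boundedness on $H_d^1$ to the sharp condition $\hat f \in H^\infty(\mathbb{D}_{2^{-1/2}})$.

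For $(b) \Rightarrow (a)$: for every mean-zero $u$ we have $\|u\|_{H_d^1} \asymp \|Pu\|_{L^1} \le \|Pu\|_{L^2} = \|u\|_{L^2}$ (by Cauchy--Schwarz on $[0,1]$ and Parseval for the Haar basis), so $f \in L^2$ forces $f \in H_d^1$. For the operator bound I would take an almost-minimising atomic representation $x = \sum_\alpha 2^{|\alpha|} V^\alpha x_\alpha$ with $x_\alpha \in L^2_0$ and $\sum_\alpha \|x_\alpha\|_{L^2} \le 2\|x\|_{H_d^1}$; since $T_f$ commutes with each $V^\alpha$ (it commutes with both $V_0$ and $V_1$) and sends mean-zero functions to mean-zero functions, one gets the new representation $T_f x = \sum_\alpha 2^{|\alpha|} V^\alpha(T_f x_\alpha)$, with $\|T_f x_\alpha\|_{L^2} \le \|T_f\|_{L^2}\|x_\alpha\|_{L^2}$; summing then yields $\|T_f x\|_{H_d^1} \le 2\|T_f\|_{L^2}\|x\|_{H_d^1}$.

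For $(a) \Rightarrow (b)$: the first step is to apply the $H_d^1$-bound to the single atom $a = 2^{|\alpha|} V^\alpha u$ with an arbitrary $u \in L^2_0$. Using the scaling identity $\|2^{|\alpha|} V^\alpha v\|_{H_d^1} = \|v\|_{H_d^1}$ together with the commutation $T_f a = 2^{|\alpha|} V^\alpha(T_f u)$ and the atomic bound $\|a\|_{H_d^1} \le \|u\|_{L^2}$, one obtains $\|T_f u\|_{H_d^1} \le \|T_f\|_{H_d^1}\|u\|_{L^2}$, i.e.\ $T_f : L^2_0 \to H_d^1$ is bounded; dually, $T_f^* : BMO_d \to L^2$ is bounded. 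Combining this with $T_f^* : BMO_d \to BMO_d$ (the adjoint of hypothesis $(a)$) and the fact that $T_f^*$ preserves each Haar chaos (as one sees from the explicit formula $(T_f^* y)_\alpha = \sum_k c_k 2^{-k}\eta_{\alpha 0_k}$), one then extracts the bound $\hat f \in H^\infty(\mathbb{D}_{2^{-1/2}})$. Proposition \ref{l.L2} then delivers $T_f$ bounded on $L^2$, and $f \in L^2$ follows from the $H^2$-embedding estimate $\sum_k |c_k|^2 2^{-k} \le \|\hat f\|_{H^\infty(\mathbb{D}_{2^{-1/2}})}^2$.

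The main obstacle is the last extraction step: restricting $T_f^*$ to $\mathcal{H}_{ch}^1 \cap BMO_d$ alone yields only $\hat f \in M_1^+(\mathbb{D}_{1/2}) = A_1^+(\mathbb{D}_{1/2})$, which is strictly weaker than the target radius $2^{-1/2}$. The additional strength must come from combining the $BMO_d \to L^2$ and $BMO_d \to BMO_d$ bounds across higher-order Haar chaoses, and identifying the correct test family on which to read off boundedness at the critical radius $2^{-1/2}$ is the delicate point.
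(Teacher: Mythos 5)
Your $(b)\Rightarrow(a)$ argument is exactly the paper's: take an atomic representation $x=\sum_\alpha 2^{|\alpha|}V^\alpha x_\alpha$, use that $T_f$ commutes with the multishift to get $T_fx=\sum_\alpha 2^{|\alpha|}V^\alpha T_fx_\alpha$, and sum, giving $\|T_f\|_{H_d^1}\le\|T_f\|_{L^2}$. That half is complete.

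For $(a)\Rightarrow(b)$, however, your proposal stops short of a proof, and you say so yourself: the ``extraction step'' from the pair of bounds $T_f^*:BMO_d\to L^2$ and $T_f^*:BMO_d\to BMO_d$ to $\hat f\in H^\infty(\mathbb{D}_{2^{-1/2}})$ is exactly what is missing, and the paper does not obtain it by testing $T_f^*$ on any family at all. The paper's proof has two separate ingredients, neither present in your outline. First, a concrete test function: $x_0=\sum_{s\ge1}\sum_{(k_1,\dots,k_s)\in K^s(n)}V_0^{k_1}V_1\cdots V_1V_0^{k_s}h$ with $k_1,\dots,k_{s-1}<n$ and $k_s=n$, a sum of pairwise disjoint Haar functions spread over \emph{all} chaos orders (this is the ``correct test family'' you were looking for), so that $\|x_0\|_{BMO_d}\le1$ while a direct computation gives $\|T_f^*x_0\|_{L^2}^2=\sum_{j=0}^{n}|c_j|^22^{-j}$; letting $n\to\infty$ and using Lemma \ref{l.1} yields only $f\in L^2$, i.e. $\hat f\in A_2^+(\mathbb{D}_{2^{-1/2}})$ --- still far from $H^\infty(\mathbb{D}_{2^{-1/2}})$. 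Second, the upgrade to boundedness is done by a spectral, not a testing, argument: one introduces the dilations $f_r$ with symbols $\hat f(rz)$, $0<r<1$, proves the inclusion $\sigma(T_{f_r})_{L^2}\subset\sigma(T_f)_{H_d^1}$ (if $\lambda I-T_f$ is invertible in $H_d^1$, its inverse commutes with the multishift, hence equals $T_{g_\lambda}$ with $g_\lambda\in H_d^1$; the first ingredient applied to $g_\lambda$ gives $g_\lambda\in L^2$, so $(\lambda-\hat f(z))^{-1}$ is analytic on $\mathbb{D}_{2^{-1/2}}$ and $\lambda\notin\overline{\hat f_r(\mathbb{D}_{2^{-1/2}})}=\sigma(T_{f_r})_{L^2}$), and then uses the identity $\|T_{f_r}\|_{L^2}=\rho(T_{f_r})_{L^2}$ from Corollary \ref{c.L23} to conclude $\sup_{0<r<1}\|\hat f_r\|_{H^\infty(\mathbb{D}_{2^{-1/2}})}\le\rho(T_f)_{H_d^1}<\infty$, hence $\hat f\in H^\infty(\mathbb{D}_{2^{-1/2}})$ and Proposition \ref{l.L2} finishes. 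Your hope of ``combining the $BMO_d\to L^2$ and $BMO_d\to BMO_d$ bounds across higher-order chaoses'' to reach the critical radius directly is not realized in the paper and, as far as the argument there shows, would at best reproduce the $L^2$-membership of $f$; the genuine content of the hard direction lies in the disjointly-supported multi-order test function and the spectral-radius/dilation argument, both absent from your plan.
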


\begin{proof}
Let us show that the implication $(b)\Rightarrow(a)$ holds for an arbitrary (not necessarily being a first-order Haar chaos) $f\in L^2$.

By using the atomic decomposition of the space $H_d^1$ (see Section~\ref{dyadic}), let us represent a function $x\in H_d^1$ as follows:
$$
x=\sum_{\alpha\in\mathbb{A}}2^{|\alpha|}V^{\alpha}x_{\alpha}, \quad\mbox{where}\; x_{\alpha}\;\mbox{are mean zero and}\;\; \sum_{\alpha\in\mathbb{A}}\|x_{\alpha}\|_{L^2}<\infty.
$$
Then, since $T_f$ commutes with the Haar multishift, we have
$$
T_fx=\sum_{\alpha\in\mathbb{A}}2^{|\alpha|}V^{\alpha}T_fx_{\alpha},
$$
whence
$$
\|T_fx\|_{H_d^1}\le\sum_{\alpha\in\mathbb{A}}\|T_fx_{\alpha}\|_{L^2}\le\|T_f\|_{L^2}\sum_{\alpha\in\mathbb{A}}\|x_{\alpha}\|_{L^2}.
$$
As a result, taking the infimum over all suitable representations of $x$, we get the desired estimate
$$
\|T_f\|_{H_d^1}\le\|T_f\|_{L^2}.
$$

$(a)\Rightarrow(b)$. Let now $f\in\mathcal{H}_{ch}^1$, $f=\sum_{k=0}^\infty c_kh_{2^k}$. We show first that from the boundedness of the operator $T_f$ in the space $H_d^1$ it follows $f\in L^2$.

For a fixed $n\in\mathbb{N}$, we claim that the dyadic intervals of the form
$$
I(0_{k_1},1,\ldots,0_{k_{s-1}},1,0_n), \qquad k_1,\dots,k_{s-1}<n, \qquad s\in\mathbb{N},
$$
are pairwise disjoint. On the contrary, assume that
$$
I(0_{j_1},1,\dots,0_{j_{r-1}},1,0_n)\subset I(0_{k_1},1,\dots,0_{k_{s-1}},1,0_n)
$$
for some $j_1,\dots,j_{r-1},k_1,\dots,k_{s-1}<n$, $r,s\in\mathbb{N}$. This inclusion implies that there is $\alpha\in\mathbb{A}$ such that
$$
(0_{j_1},1,\dots,0_{j_{r-1}},1,0_n)=(0_{k_1},1,\dots,0_{k_{s-1}},1,0_n,\alpha).
$$
Clearly, $r\ge s$ and $j_1=k_1$, \dots, $j_{s-1}=k_{s-1}$. If $r=s$, the index $\alpha$ is "empty"\:and so the intervals coincide. Otherwise, assuming that $r>s$, we have
$$
(0_{j_s},1,\dots,0_{j_{r-1}},1,0_n)=(0_n,\alpha).
$$
Hence, $j_s\ge n$, which contradicts the definition of the intervals.

Thus, our claim is proved, and therefore the function $x_0$ defined by
$$
x_0:=\sum_{s=1}^{\infty}\sum_{(k_1,\dots,k_s)\in K^s(n)}V_0^{k_1}V_1\ldots V_1V_0^{k_s}h,
$$
where
$$
K^s(n)=\{(k_1,\dots,k_s):0\le k_1,\dots,k_{s-1}<n,k_s=n\}, \qquad s\in\mathbb{N},
$$
is a sum of pairwise disjoint Haar functions, and in consequence $\|x_0\|_{BMO_d}\le1$ (in fact, $\|x_0\|_{BMO_d}=1$, because the union of supports of involved Haar functions coincides with the whole interval $I$).


Further, a direct calculation shows that for a function $y\in BMO_d=(H_d^1)^*$ such that
$$
y=\sum_{s=1}^{\infty}\sum_{k_1,\dots,k_s\ge0}\eta_{k_1,\dots,k_s}V_0^{k_1}V_1\ldots V_1V_0^{k_s}h
$$
we have
$$
T_f^*y=\sum_{s=1}^{\infty}\sum_{k_1,\dots,k_s\ge0}
\biggl(\sum_{j=0}^{\infty}\frac{c_j\eta_{k_1,\dots,k_{s-1},k_s+j}}{2^j}\biggr)V_0^{k_1}V_1\ldots V_1V_0^{k_s}h.
$$
In particular, substituting into this formula $x_0$ for $y$, we get
$$
T_f^*x_0=\sum_{s=1}^{\infty}\sum_{k_1=0}^{n-1}\ldots\sum_{k_{s-1}=0}^{n-1}\sum_{k_s=0}^n
\frac{c_{n-k_s}}{2^{n-k_s}}V_0^{k_1}V_1\ldots V_1V_0^{k_s}h.
$$
Hence,
$$
\|T_f^*x_0\|^2_{L^2}=\sum_{s=1}^{\infty}\sum_{k_1=0}^{n-1}\ldots\sum_{k_{s-1}=0}^{n-1}\sum_{k_s=0}^n
\frac{1}{2^{k_1+\ldots+k_s+s-1}}\Bigl|\frac{c_{n-k_s}}{2^{n-k_s}}\Bigr|^2
=\sum_{j=0}^n\frac{|c_j|^2}{2^j},
$$
because
$$
\sum_{s=1}^{\infty}\frac{1}{2^{s-1}}\biggl(\sum_{k=0}^{n-1}\frac{1}{2^k}\biggr)^{s-1}\frac{1}{2^n}
=\frac{1}{2^n}\sum_{s=1}^{\infty}\Bigl(1-\frac{1}{2^n}\Bigr)^{s-1}=1.
$$
Thus, taking into account that $BMO_d\subset L^2$ with constant $1$ and $T_f$ is bounded in $H_d^1$, we have
$$
\biggl(\sum_{j=0}^n\frac{|c_j|^2}{2^j}\biggr)^{1/2}=\|T_f^*x_0\|_{L^2}\le\|T_f^*x_0\|_{BMO_d}\le\|T_f^*\|_{BMO_d}=\|T_f\|_{H_d^1}<\infty.
$$
Since $n\in\mathbb{N}$ is arbitrary, then from Lemma \ref{l.1} it follows that $f\in L^2$.

Next, we establish the boundedness of the operator $T_f:L^2\to L^2$. Let
$$
f_r:=\sum_{k=0}^{\infty}c_kr^kV_0^kh, \qquad 0<r<1.
$$
Passing to symbols, we have $\hat{f}_r(z)=\hat{f}(rz)$. Since $f\in L^2$, then its symbol $\hat{f}(z)$ is an analytic function in the disk $\mathbb{D}_{2^{-1/2}}$, and hence each symbol $\hat{f}_r(z)$ is a bounded analytic function in this disk. Then, by Proposition \ref{l.L2}, the operator $T_{f_r}$ is bounded in $L^2$ for every $0<r<1$. Let us prove the following inclusion for the spectra of $T_{f_r}$ and $T_{f}$ in the spaces $L^2$ and $H_d^1$ respectively:
\begin{equation}\label{eq.sp}
\sigma(T_{f_r})_{L^2}\subset\sigma(T_f)_{H_d^1}, \qquad 0<r<1.
\end{equation}


Indeed, assume that the operator $\lambda I-T_f$ is invertible in the space $H_d^1$. Observe that the inverse operator $(\lambda I-T_f)^{-1}$ commutes with the Haar multishift and can be representable as $T_{g_{\lambda}}$, with $g_{\lambda}=(\lambda I-T_f)^{-1}h\in H_d^1$. As was shown, this implies that $g_{\lambda}\in L^2$. Moreover, since  the symbol $\hat{g}_{\lambda}(z)=(\lambda-\hat{f}(z))^{-1}$ is an analytic function in the disk $\mathbb{D}_{2^{-1/2}}$, we have $\lambda\notin\hat{f}(\mathbb{D}_{2^{-1/2}})$.
All the more, so $\lambda\notin\hat{f}_r(\overline{\mathbb{D}_{2^{-1/2}}})=\overline{\hat{f}_r(\mathbb{D}_{2^{-1/2}})}=\sigma(T_{f_r})_{L^2}$ (see Corollary~\ref{c.L23}), i.e., the operator $\lambda I-T_{f_r}$ is invertible in the space $L^2$. Clearly, this completes the proof of \eqref{eq.sp}.

Further, appealing once more to Corollary~\ref{c.L23}, we see that the spectral radius of an operator of the form $T_{g}$ in $L^2$ is equal to its $L^2$-norm. Therefore, from \eqref{eq.sp} it follows that
$$
\|T_{f_r}\|_{L^2}=\rho(T_{f_r})_{L^2}\le\rho(T_f)_{H_d^1}=C<\infty, \qquad 0<r<1,
$$
i.e., the operators $T_{f_r}:L^2\to L^2$ are uniformly bounded. In view of the equality $\|T_{f_r}\|_{L^2}=\|\hat{f}_r\|_{H^{\infty}(\mathbb{D}_{2^{-1/2}})}$ by Remark~\ref{rem:oper norm}, this yields that the functions $\hat{f}_r$, $0<r<1$, are uniformly bounded in the disk $\mathbb{D}_{2^{-1/2}}$. Thus, $\hat{f}$ is bounded in $\mathbb{D}_{2^{-1/2}}$, and so, by Proposition \ref{l.L2}, the operator $T_f:L^2\to L^2$ is bounded.
\end{proof}

\begin{remark}\rm
In the proof of the implication $(a)\Rightarrow(b)$ of Proposition \ref{l.Hd} we make use of a construction of a special function $x_0$ similar to that one can find in the papers \cite{Sch}, \cite{S}, and \cite{SS}, which are devoted to studying rearrangements of the Haar system in function spaces.
\end{remark}

\begin{remark}\rm
As was showed in \cite[Theorems~4 and 8]{AT2018}, from the boundedness of the operator $T_f$ on $L^p$ for some $1<p<\infty$ it follows that $T_f$ is bounded both in $H_d^1$ and in $L^q$ for all $1<q<p$. Hence, the conditions $(a)$ and $(b)$ of Proposition \ref{l.Hd} are equivalent to each of the following:

(c) there is $1<p<2$ such that $f\in L^p$  and the operator $T_f$ is bounded in $L^p$;

(d) $f\in\bigcap_{1<p<2}L^p$ and $T_f$ is bounded in $L^p$ for all $1<p<2$.
\end{remark}

\begin{corollary}\label{c.Hd1}
Let $f\in\mathcal{H}_{ch}^1$. The following conditions are equivalent:

(a) $f\in H_d^1$ and the operator $T_f$ is an isomorphism of the space $H_d^1$;

(b) there is $1<p<2$ such that $f\in L^p$ and $T_f$ is an isomorphism of the space $L^p$;

(c) $f\in\bigcap_{1<p<2}L^p$ and $T_f$ is an isomorphism of $L^p$ for all $1<p<2$;

(d) $f\in L^2$ and $T_f$ is an isomorphism of $L^2$.
\end{corollary}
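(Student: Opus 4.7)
The plan is to deduce Corollary \ref{c.Hd1} from Proposition \ref{l.Hd} together with the preceding Remark by exactly the same scheme by which Corollary \ref{c.L21} was deduced from Proposition \ref{l.L2}. The guiding observation is that a bounded operator on a Banach space is an isomorphism precisely when its inverse is also bounded, and the inverse of $T_f$, since it commutes with $V_0$ and $V_1$, must have the form $T_g$ for some function $g$ --- which will turn out to be itself a first-order Haar chaos, allowing the Proposition to be applied to it as well.

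Concretely, let $X$ denote any one of the spaces (restricted to mean-zero functions) $H_d^1$, $L^p$ with $1<p<2$, or $L^2$. If $T_f$ is an isomorphism of $X$, then $T_f^{-1}$ is bounded on $X$ and commutes with the Haar multishift, so it is representable as $T_g$ with $g:=T_f^{-1}h\in X$. Mirroring the proof of Corollary \ref{c.L21}, I would then use the fact that the Haar system is an unconditional basis in each such $X$ to obtain a chaos decomposition $g=\sum_{d=1}^{\infty}g_d$ with $g_d\in\mathcal{H}_{ch}^d\cap X$, converging in the norm of $X$. Since $T_f g_d\in\mathcal{H}_{ch}^d$ and $h=T_f g=\sum_d T_f g_d$, the uniqueness of the chaos decomposition of $h\in\mathcal{H}_{ch}^1$ forces $T_f g_d=0$ for every $d\geq 2$, and injectivity of $T_f$ on $X$ yields $g=g_1\in\mathcal{H}_{ch}^1$.

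Once this is established, I apply Proposition \ref{l.Hd} and the preceding Remark --- which give the equivalence of boundedness on $H_d^1$, on $L^p$ (for some and for all $p\in(1,2)$), and on $L^2$ --- separately to $f$ and to $g$. This yields that simultaneous boundedness of $T_f$ and $T_g$ on any single one of these spaces is equivalent to simultaneous boundedness of the same pair on all of them. Since the identities $T_f T_g=T_g T_f=I$ hold on the dense subspace of mean-zero Haar polynomials, simultaneous boundedness of $T_f$ and $T_g$ on $X$ is exactly the assertion that $T_f$ is an isomorphism of $X$. Combining these observations yields the equivalence of $(a)$--$(d)$.

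I expect the main point requiring care to be the justification that $g$ is a first-order Haar chaos in the $H_d^1$ and $L^p$ ($p<2$) settings, where no orthogonal chaos decomposition is available and one must instead invoke unconditionality of the Haar system in $X$ to group the Haar expansion of $g$ by chaos order. Everything else is a formal combination of the commutativity with the multishift, the existence of a biorthogonal system (giving injectivity of $T_f$), and the previously established equivalences of boundedness across the spaces in question.
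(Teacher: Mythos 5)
Your proposal is correct and follows essentially the same route the paper intends: the corollary is obtained from Proposition \ref{l.Hd} and the accompanying remark by passing to the inverse $T_f^{-1}=T_g$, checking via the chaos-order decomposition (exactly as in Corollary \ref{c.L21}) that $g$ is again a first-order Haar chaos, and then transferring the simultaneous boundedness of $T_f$ and $T_g$ across $H_d^1$, $L^p$ ($1<p<2$) and $L^2$. Your extra care about using unconditionality of the Haar system in $H_d^1$ and $L^p$ to justify the chaos decomposition there is exactly the right point to flag, and it works.
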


\begin{remark}\rm
\label{spectrum stab}
The results obtained indicate a substantial difference in the spectral properties of the operator $T_f$ in the $L^p$-spaces, corresponding to a first-order Haar chaos $f$, for $1<p<2$ and $2<p<\infty$. Namely, if $p>2$, then the spectrum of $T_f$ grows in increasing $p$, what is demonstrated by the following inclusions:
$$
\overline{\hat{f}(\mathbb{D}_{2^{-1/2}})}=\sigma(T_f)_{L^2}\subset\sigma(T_f)_{L^p}\subset\sigma(T_f)_{BMO_d}=\hat{f}(\overline{\mathbb{D}}),
\qquad 2<p<\infty.
$$
In contrast to that, in the case when $1<p<2$, we have stabilization of the spectrum:
$$
\sigma(T_f)_{H_d^1}=\sigma(T_f)_{L^p}=\sigma(T_f)_{L^2}.
$$

Notice also, in passing, that, according to \cite{AT2018a}, for a first-order Rademacher chaos $f=\sum_{k=0}^{\infty}a_kr_k$, $\sum_{k=0}^{\infty}|a_k|^2<\infty$ ($r_k$ are the Rademacher functions), the spectrum $\sigma(T_f)_{L^p}$ is independent of $p\in(1,\infty)$.
\end{remark}

\begin{corollary}\label{c.Hd2}
Let $f\in\mathcal{H}_{ch}^1$, and let $\{f_n\}_{n=1}^\infty$ be the system of dilations and translations of $f$. The following conditions are equivalent:

(a) $f\in H_d^1$ and $\{f_n\}$ is a basis in $H_d^1$, equivalent to the Haar system;

(b) there is $1<p<2$ such that $f\in L^p$ and $\{f_n\}$ is a basis in $L^p$, equivalent to the Haar system;

(c) $f\in\bigcap_{1<p<2}L^p$ and $\{f_n\}$ is a basis in $L^p$ for all $1<p<2$, equivalent to the Haar system;

(d) $f\in L^2$ and $\{f_n\}$ is a basis in $L^2$, equivalent to the Haar system.
\end{corollary}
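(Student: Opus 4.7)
The plan is to deduce Corollary \ref{c.Hd2} as an immediate reformulation of Corollary \ref{c.Hd1} via the dictionary between bounded operators commuting with the Haar multishift and systems of dilations and translations. The key observation, already recorded at the end of Section~\ref{generation}, is the following: for any Banach function space $X$ on $[0,1]$ in which the Haar system is an unconditional basis of the mean zero subspace, the system of dilations and translations $\{f_n\}_{n=1}^\infty$ is a basis of that subspace equivalent to $\{h_n\}_{n=1}^\infty$ if and only if the linear operator $T_f$, defined by $T_f h_n = f_n$ on Haar polynomials, extends to a bounded isomorphism of the mean zero subspace of $X$. Indeed, equivalence of bases is, by definition, a two-sided estimate $\|\sum a_n f_n\|_X \simeq \|\sum a_n h_n\|_X$, which is exactly the statement that $T_f$ and its inverse are bounded on finite Haar expansions and hence extend by density.

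The first step is therefore to verify that this dictionary applies to each of the spaces appearing in Corollary~\ref{c.Hd2}: $L^p$ for $1<p<\infty$ (where $\{h_n\}$ is unconditional by the classical result recalled in Section~\ref{Intro}), and $H_d^1$ (where $\{h_n\}$ is an unconditional basis of the mean zero subspace, as is transparent from the atomic/Paley definitions in Section~\ref{dyadic}, since $\|x\|_{H_d^1}\simeq \|Px\|_{L^1}$ depends only on the moduli of the Fourier--Haar coefficients of $x$). For $L^2$ the dictionary is immediate from orthogonality.

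Once the dictionary is in hand, conditions (a)--(d) of Corollary~\ref{c.Hd2} translate termwise into conditions (a)--(d) of Corollary~\ref{c.Hd1}:
\begin{itemize}
\item[(a)] $\{f_n\}$ is a basis in $H_d^1$ equivalent to the Haar system $\iff$ $T_f$ is an isomorphism of $H_d^1$;
\item[(b)--(c)] the analogous equivalences in $L^p$ for $1<p<2$;
\item[(d)] the analogous equivalence in $L^2$.
\end{itemize}
In each case the membership clause (such as ``$f\in H_d^1$'' or ``$f\in L^p$'') matches verbatim. Applying Corollary~\ref{c.Hd1} then closes the chain of equivalences.

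There is essentially no obstacle beyond bookkeeping: one only needs to check that the extension of $T_f$ from Haar polynomials (where it is defined algebraically by $T_f h_n = f_n$) to a bounded operator on the full space is consistent with what ``basis equivalent to the Haar system'' demands, and this is standard given unconditionality of $\{h_n\}$ in the spaces involved. Thus the proof reduces to citing Corollary~\ref{c.Hd1} after this routine translation.
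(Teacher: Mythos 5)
Your proposal is correct and matches the paper's own (implicit) argument: Corollary \ref{c.Hd2} is obtained there exactly as a restatement of Corollary \ref{c.Hd1} via the dictionary ``$\{f_n\}$ is a basis equivalent to the Haar system $\iff$ $T_f$ is an isomorphism of the mean zero subspace,'' the same device used to pass from Corollaries \ref{c.L21} and \ref{c.Lpg21} to Corollaries \ref{c.L22} and \ref{c.Lpg22}. Your extra appeal to unconditionality is harmless but not needed; the standard basis-equivalence argument only uses that the Haar system is a (Schauder) basis of the relevant mean zero subspaces.
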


\section{Proofs of the main results.}
\label{Proofs}

Observe first that Theorem \ref{t.1} is an immediate consequence of Corollaries \ref{c.L22} and \ref{c.Lpg22}, while Theorem \ref{t.2} follows from Corollary \ref{c.Hd2}.

\begin{proof}[Proof of Theorem \ref{t.3}]
Let $W^{\alpha}f$, $\alpha\in\mathbb{A}$, be the affine Walsh system generated by the given function $f$, that is,
$$
W^{\alpha}f=\sum_{|\beta|=k}(-1)^{(\alpha,\beta)}V^{\beta}f, \qquad \alpha\in\mathbb{A}, \qquad |\alpha|=k,
$$
where
$$
(\alpha,\beta):=\sum_{\nu=1}^k\alpha_{\nu}\beta_{\nu}, \qquad \alpha,\beta\in\mathbb{A}, \qquad |\alpha|=|\beta|=k.
$$
According to \cite[Theorem 7]{AT2018a}, the conditions of the theorem ensure that $\{W^{\alpha}f\}_{\alpha\in\mathbb{A}}$ is a basis in $L^q$ for all $1<q<p$. Then, clearly, the system of dilations and translations of $f$ is a basis in $L^q$, $1<q<p$, as well. Moreover, by Theorems \ref{t.1} and \ref{t.2}, it is not equivalent to the Haar system.
\end{proof}

\begin{proof}[Proof of Theorem \ref{t.4}]
Each function $x\in L^p$ can be represented in the form $x=\sum_{d=1}^\infty x_d$, where $x_d\in\mathcal{H}_{ch}^d \cap L^p$ (convergence in $L^p$).
Let $g=\sum_{k=0}^\infty d_kV_0^kh$ be the dual function for the given first-order Haar chaos $f$. By hypothesis, both functions $\hat{f}(z)$ and $1/\hat{f}(z)$ belong to the space $M_p^+(\mathbb{D}_{2^{-1/p}})$. Therefore, by Proposition \ref{l.Lpg2} and Remark \ref{rem:independ on p}, for every $1<p<\infty$ the operators $T_f$ and $T_g$ are bounded on the subspaces $\mathcal{H}_{ch}^d \cap L^p$, $d=1,2,\dots$. This implies that
\begin{multline*}
x_d=T_fT_gx_d=T_fT_g\sum_{k_1,\dots,k_d\ge0}\xi_{k_1,\dots,k_d}V_0^{k_1}V_1\ldots V_1V_0^{k_d}h\\
=T_f\sum_{k_1,\dots,k_d\ge0}\biggl(\sum_{j=0}^{k_s}\xi_{k_1,\dots,k_{s-1},k_s-j}d_j\biggr)V_0^{k_1}V_1\ldots V_1V_0^{k_d}h\\
=\sum_{k_1,\dots,k_d\ge0}\biggl(\sum_{j=0}^{k_s}\xi_{k_1,\dots,k_{s-1},k_s-j}d_j\biggr)V_0^{k_1}V_1\ldots V_1V_0^{k_d}f.
\end{multline*}
As a result, for every $x\in L^p$ we obtain the expansion of the form \eqref{exp via chaoses}. Hence, the proof of the theorem is completed.
\end{proof}


\section{Some final remarks and examples}
\label{Examples}

Let $T:=T_{h_2}=T_{V_0h}$. As was said above, this operator commutes with the Haar multishift, and from the connection between a first-order Haar chaos $f=\sum_{k=0}^{\infty}c_kh_{2^k}$ and its symbol $\hat{f}(z)=\sum_{k=0}^{\infty}c_kz^k$ it follows the following representation of the operator $T_f$:
\begin{equation}\label{operator}
T_f=\hat{f}(T):=\sum_{k=0}^{\infty}c_kT^k.
\end{equation}
By Corollaries~\ref{c.L23}, \ref{c.Lpg23} and Remark~\ref{spectrum stab},  the spectrum $\sigma(T)_{L^p}$ of the operator $T:L^p\to L^p$ is defined by
$$
\sigma(T)_{L^p}=\overline{\mathbb{D}}_{R_p}, \qquad\mbox{where}\;\;
R_p=
\begin{cases}
2^{-1/2}, & 1<p\le2,\\
2^{-1/p}, & 2\le p<\infty.
\end{cases}
$$
Combining this together with \eqref{operator} and applying the spectral mapping theorem, we get
$$
\sigma(T_f)_{L^p}=\sigma(\hat{f}(T))_{L^p}=\hat{f}(\sigma(T)_{L^p})=\hat{f}(\overline{\mathbb{D}}_{R_p}),
$$
provided that the symbol $\hat{f}(z)$ is an analytic function in some disk $\mathbb{D}_R$ of radius $R>R_p$. Let us consider first the following simplest case of such a situation.

\begin{example}\rm
Let a symbol $\hat{f}$ be just an algebraic polynomial, i.e., $\hat{f}(z)=\sum_{k=0}^nc_kz^k$. Then the operator $T_f$ is being an isomorphism of the space $L^p$ (respectively, the system of dilations and translations $(f_n)_{n=1}^\infty$ of $f$ is a basis in $L^p$, equivalent to the Haar system) if and only if the polynomial $\hat{f}(z)$ does not vanish in the closed disk $\overline{\mathbb{D}}_{R_p}$. Moreover, assume that $z_0$ is the smallest  (in modulus) zero of the polynomial $\hat{f}(z)$. Then the basisness properties of the system $(f_n)$ in the scale of $ L^p$-spaces, $1<p<\infty$, are completely described as follows:

(a) if $|z_0|\le\frac12$, then $\{f_n\}$ fails to be a basis in each of the spaces $L^p$, $1<p<\infty$;

(b) if $|z_0|=2^{-1/p_0}$, $1<p_0\le2$, then $\{f_n\}$ is a basis in $L^p$ if and only if $1<p<p_0$. For such a $p$ the system $\{f_n\}$ is not equivalent to the Haar system in $L^p$;

(c) if $|z_0|=2^{-1/p_0}$, $2<p_0<\infty$, then $\{f_n\}$ is a basis in $L^p$ if and only if $1<p<p_0$. For such a $p$ the system $\{f_n\}$ is equivalent to the Haar system in $L^p$;

(d) if $|z_0|\ge1$, then $\{f_n\}$ is a basis in $L^p$ for every $1<p<\infty$, equivalent to the Haar system.

\end{example}

In fact, the same characterization as for polynomials holds also for an arbitrary symbol $\hat{f}(z)$ with "excessive"\:analyticity, i.e., in the case when its smallest  (in modulus) zero $z_0$ lies inside the disk of analyticity of $\hat{f}(z)$. Unlike to that, in the next example the smallest  (in modulus) zero lies on the boundary of the disk of analyticity.

\begin{example}\rm
Let $1<p<\infty$ and $0<\theta<1-\frac1p$. Consider the first-order Haar chaos $f$ with the symbol
$$
\hat{f}(z)=(1-2^{1/p}z)^{\theta}=1-\sum_{k=1}^{\infty}c_k2^{k/p}z^k.$$
Then, the symbol of the dual function $g$ is defined by
$$
\hat{g}(z)=(1-2^{1/p}z)^{-\theta}=1+\sum_{k=1}^{\infty}d_k2^{k/p}z^k.$$
As an easy calculation shows, we have $c_k\simeq\frac{1}{(k+1)^{1+\theta}}$ and $d_k\simeq\frac{1}{(k+1)^{1-\theta}}$ for large $k\in\mathbb{N}$. Therefore, by Lemma~\ref{l.1}, $f,g\in L^p$. Moreover, $\hat{f}\in M_p^+(\mathbb{D}_{2^{-1/p}})$, because the Taylor expansion of $\hat{f}$ absolutely converges in the disk of radius $2^{-1/p}$, while $\hat{g}\notin M_p^+(\mathbb{D}_{2^{-1/p}})$, because the function $\hat{g}$ is unbounded in this disk. Putting all together, we see that, for every $1<p<\infty$, the system of dilations and translations of $f$ is not equivalent in  $L^p$ to the Haar system.

Furthermore, if $p>2$ and $\theta\ge\frac1p$, then $\hat{g}\notin A_{p'}(\mathbb{D}_{2^{-1/p}})$ and so, by Lemma~\ref{l.31}, the system of dilations and translations $\{f_n\}$ of $f$ fails to be uniformly minimal and hence, all the more, to be a basis in $L^p$. In contrast to that, for $p=2$ and $\theta<\frac12$,  the system $\{f_n\}$ is a basis in chaoses
in $L^2$. Indeed, from the well-known Babenko's result \cite{Bab} (see also \cite[p.~1638]{Tz-Handbook}), it follows that the system $\{e^{int}|t|^{\theta}\}_{n\in\mathbb{Z}}$ is a basis in the space $L^2(\mathbb{T})$ (not being a Riesz basis if $\theta\ne 0$) for each $\theta\in (-1/2,1/2)$. Hence, taking into account that $|\hat{f}(z)|\simeq|\arg z|^{\theta}$, $|z|=2^{-1/2}$, we conclude that the system $\{z^n\hat{f}(z)\}_{n\ge0}$ is a basis in $H^2(\mathbb{D}_{2^{-1/2}})$, or equivalently, $\{f_{2^k}\}_{k\ge0}$ is a basis in $\mathcal{H}_{ch}^1\cap L^2$. From the results obtained then it follows that $\{f_n\}_{n\in N_d}$ is a basis in the subspace $\mathcal{H}_{ch}^d\cap L^2$ for each $d=1,2,\dots$. Repeating now the proof of Theorem~\ref{t.4}, we come to the desired result.
\end{example}


\end{document}